\newtheorem{thm}{Theorem}[section]
\newtheorem{prop}[thm]{Proposition}
\newtheorem{cor}[thm]{Corollary}
\newtheorem{lem}[thm]{Lemma}
\theoremstyle{definition}
\newtheorem{dfn}[thm]{Definition}
\newtheorem{rmk}[thm]{Remark}
\numberwithin{equation}{section}
\newcommand{\bG}{\mathbb{G}}
\newcommand{\id}{\textrm{id}}
\newcommand{\Irr}{{\rm Irr}}
\newcommand{\Pol}{{\rm Pol}}
\newcommand{\Rep}{{\rm Rep}}
\newcommand{\cS}{\mathcal{S}}
\newcommand{\bH}{\mathbb{H}}
\newcommand{\qdim}{\: {\rm qdim}}
\newcommand{\Mor}{ {\rm Mor} }
 \title[Riesz transforms on compact quantum groups and strong solidity]{Riesz transforms on compact quantum groups and strong solidity}
\date{\noindent \today.  \\
 {\it MSC2010}: 46L67, 46L10. {\it Keywords}: Quantum Markov Semi-groups, Compact quantum groups, Strong solidity, Riesz transforms.
MC is supported by the NWO Vidi grant ‘Noncommutative harmonic analysis and rigidity of operator algebras’, VI.Vidi.192.018.
 }
\author[Martijn Caspers]{Martijn Caspers}
\address{TU Delft, EWI/DIAM,
	P.O.Box 5031,
	2600 GA Delft,
	The Netherlands}
\email{m.p.t.caspers@tudelft.nl}
\begin{document}

\maketitle

\begin{abstract}
One of the main aims of this paper is to give a large class of strongly solid compact quantum groups. We do this by using quantum Markov semi-groups (QMS's)  and non-commutative Riesz transforms. We introduce a property for QMS's of central multipliers on a compact quantum group which we shall call `approximate linearity with almost commuting intertwiners'.   We show that this property is stable under free products, monoidal equivalence, free wreath products and  dual quantum subgroups. Examples include in particular all the (higher dimensional) free orthogonal easy quantum groups.

We then show that a compact quantum group with a QMS that is approximately linear with almost commuting intertwiners, satisfies the immediately gradient-$\cS_2$ condition from \cite{CaspersGradient} and derive strong solidity results (following \cite{CaspersGradient}, \cite{OzawaPopaAJM}, \cite{Peterson}). Using the non-commutative Riesz transform  we also show that these quantum groups have the Akemann-Ostrand property; in particular the same strong solidity results follow again (now following \cite{IsonoTrams}, \cite{PopaVaesCrelle}).
 
\end{abstract}

In their fundamental papers Voiculescu \cite{Voiculescu} and  Ozawa--Popa \cite{OzawaPopaAnnals}  prove that the free group factors $\mathcal{L}(\mathbb{F}_n), n \geq 2$ do not contain a Cartan subalgebra. This means that $\mathcal{L}(\mathbb{F}_n)$ does not contain a maximal abelian von Neumann subalgebra whose normalizer generates $\mathcal{L}(\mathbb{F}_n)$.  Consequently $\mathcal{L}(\mathbb{F}_n)$ does not admit a natural crossed product decomposition and is therefore distinguishable from the class of group measure space von Neumann algebras. The proof of Ozawa and Popa in fact shows a stronger property. They show that the normalizer of any diffuse amenable von Neumann subalgebra of $\mathcal{L}(\mathbb{F}_n)$ generates a von Neumann algebra that is amenable again. This property became known as strong solidity. After \cite{OzawaPopaAnnals} many von Neumann algebras were proven to be strongly solid.

These strong solidity results required several techniques that come from approximation properties and the geometry of groups. The proof of Ozawa-Popa \cite{OzawaPopaAnnals} essentially splits into two parts. Firstly, they show that weak amenablity of a group (or the W$^\ast$-CCAP  of its von Neumann algebra) can be used to prove a so-called weak compactness property. Secondly, using weak compactness and Popa's deformation and spectral gap techniques they obtain their aforementioned results. For the second part a number of alternative approaches have been presented. Essentially they split into three methods. (1) The use of malleable deformations \cite{OzawaPopaAnnals}. (2) The use of closable derivations in 1-cohomology and HH$^+$-type properties \cite{OzawaPopaAnnals}, \cite{Peterson}. (3) The use of the Akemann-Ostrand property \cite{PopaVaesCrelle} or quasi-cohomological methods \cite{ChifanSinclairUdrea}. (2) and (3) are closely related (see also \cite{CIW} and Section \ref{Sect=AO}). Each of these approaches provide new classes of von Neumann algebras that are strongly solid.

\vspace{0.3cm}

 We believe it is instructive to include the following diagram at this point, since these global methods shall not appear very explicitly in this paper (but rather in the references). Our focus here is  to show that the input for (2) and (3) can be proved for a reasonably large class of quantum groups. We shall thus concentrate on the bold face part of the diagram on which we expound below. The arrows should not always be understood as strict implications; sometimes additional conditions are needed.

\hyphenation{li-nea-ri-ty}
\hyphenation{imme-dia-te-ly}

\begin{figure}[ht]
	\centering
	\begin{tikzpicture}[> = latex, thick,
	bx/.style = {draw, black},
	qbx/.style = {bx, inner sep = 20pt},
	rbx/.style = {bx, inner sep = 10pt, rounded corners = 8pt}
	]
	
	\def\w{4}

	\draw (0,-4.5) -- (3.6,-4.5) -- (3.6,-3.5) -- (0,-3.5) -- (0,-4.5);	
	\node[text width=4cm] at (2.2,-4) { (3) Akemann-\\Ostrand };

	\draw (0,-3) -- (3.6,-3) -- (3.6,-2) -- (0,-2) -- (0,-3);	
	\node[text width=4cm] at (2.2,-2.5) {(2) Derivations  };

	\draw (0,-1.5) -- (3.6,-1.5) -- (3.6,-0.5) -- (0,-0.5) -- (0,-1.5);	
	\node[text width=4cm] at (2.2,-1) {(1) Malleable\\ deformations };		

	\draw (-4.6,-3) -- (-0.4,-3) -- (-0.4,-2) -- (-4.6,-2) -- (-4.6,-3);	
	\node[text width=4cm] at (-2.4,-2.5) { {\bf QMS's and \\ Gradient-$\cS_2$  }};

 	\draw (-9.6,-3.3) -- (-5.2,-3.3) -- (-5.2,-1.7) -- (-9.6,-1.7) -- (-9.6,-3.3);	
	\node[text width=4cm] at (-7.4,-2.5) {  {\bf Approximate linear \\ + almost commuting \\ intertwiners } };

	\node[single arrow,draw=black,fill=black!10,minimum height=0.6cm, minimum width = 0.1cm, rotate=0] at (-5,-2.5) {$\:$};
	\node[single arrow,draw=black,fill=black!10,minimum height=0.6cm, minimum width = 0.1cm, rotate=0] at (-0.2,-2.5) {$\:$};
	\node[single arrow,draw=black,fill=black!10,minimum height=1.2cm, minimum width = 0.1cm, rotate=-45] at (-0.2,-3.2) {$\:$};
	\node[single arrow,draw=black,fill=black!10,minimum height=1cm, minimum width = 0.1cm, rotate=0] at (4.0,-2.5) {$\:$};
	\node[single arrow,draw=black,fill=black!10,minimum height=1.4cm, minimum width = 0.1cm, rotate=45] at (4,-3.7) {$\:$};
	\node[single arrow,draw=black,fill=black!10,minimum height=1.4cm, minimum width = 0.1cm, rotate=-45] at (4,-1.3) {$\:$};

    \draw (-8,-6.5) -- (-4.4,-6.5) -- (-4.4,-5.5) -- (-8,-5.5) -- (-8,-6.5);	
	\node[text width=4cm] at (-5.8,-6) { W$^\ast$-CCAP  \\or W$^\ast$-CBAP };

    \draw (-3,-6.5) -- (0.6,-6.5) -- (0.6,-5.5) -- (-3,-5.5) -- (-3,-6.5);	
	\node[text width=4cm] at (-0.8,-6) { Weak compactness };

	\draw (2,-8) -- (5.6,-8) -- (5.6,-7) -- (2,-7) -- (2,-8);	
	\node[text width=4cm] at (4.2,-7.5) { Strong solidity };

	\draw (4.6,-3) -- (5.6,-3) -- (5.6,-2) -- (4.6,-2) -- (4.6,-3);	
	\node[text width=1cm] at (5.3,-2.5) { {\it Or} };

	\draw (4.6,-6.5) -- (5.6,-6.5) -- (5.6,-5.5) -- (4.6,-5.5) -- (4.6,-6.5);	
	\node[text width=1cm] at (5.2,-6) { {\it And} };

	\node[single arrow,draw=black,fill=black!10,minimum height=2.5cm, minimum width = 0.1cm, rotate=-90] at (5.1,-4.1) {$\:$};

	\node[single arrow,draw=black,fill=black!10,minimum height=0.7cm, minimum width = 0.1cm, rotate=-90] at (5.1,-6.7) {$\:$};

	\node[single arrow,draw=black,fill=black!10,minimum height=1.4cm, minimum width = 0.1cm, rotate=0] at (-3.8,-6) {$\:$};

	\node[single arrow,draw=black,fill=black!10,minimum height=4.1cm, minimum width = 0.1cm, rotate=0] at (2.5,-6) {$\:$};

	%	\node[qbx] (Amenable) at (0,3) {\bf{Amenability = $\otimes_{k=0}^\infty M_2(\mathbb{C})$}};
	%	\node[qbx] (Approx) at (3,3) {\bf{Test} };
	%	\node[rbx] (A) at (-\w, 0) {A.~Geometric functional analysis};
	%	\node[rbx] (B) at (  0,-2) {B.~(Approximate) representation theory};
	%	\node[rbx] (C) at ( \w, 0) {C.~Quantum Markov processes};
	
	%	\draw[->] (A) -- (QIT);
	%	\draw[->] (B) -- (QIT);
	%	\draw[->] (C) -- (QIT);	
	\end{tikzpicture}
\end{figure}

\vspace{0.3cm}

In \cite{IsonoTrams}, \cite{IsonoIMRN} Isono provided the first  examples of von Neumann algebras coming from the theory of compact quantum groups that are strongly solid. The approach falls in category (3) described above. In particular Isono proves that free orthogonal quantum groups are strongly solid. Later different proofs of this fact were given in \cite{FimaVergnioux} and \cite{CaspersGradient} (see also the earlier paper \cite{VaesVergnioux} on solidity). In \cite{BrannanDocumenta} and \cite{IsonoIMRN}  strong solidity results for quantum automorphism groups have been obtained.

We note that in \cite[Theorem C]{IsonoIMRN} also free products of free orthogonal/unitary quantum groups and quantum automorphism groups  are covered. In the current paper we shall deal with a property that implies strong solidity and which is stable under free products and monoidal equivalence. One advantage of this approach is that our methods apply to a free product of (certain) compact quantum groups followed by a monoidal equivalence. This is especially important for the treatment of free wreath products \cite{Bichon}, \cite{LemeuxTarrago}.

In \cite{CaspersGradient} it was proved that also the type III deformations of free orthogonal and unitary quantum groups are strongly solid. The proof builds upon the weak compactness properties from \cite{BHV} and follows the path of (2) described above. The theory of quantum Markov semi-groups (QMS's) is used to construct the closable derivations in (2) from \cite{CiprianiSauvageot}. This is done for the specific examples of free orthogonal and unitary quantum groups.

\vspace{0.3cm}

This paper continues the line of \cite{CaspersGradient} by involving two new ideas. Firstly, we look at \cite{CaspersGradient} from the viewpoint of a rigid C$^\ast$-tensor category. Though that this paper is not written in the abstract language of C$^\ast$-tensor categories (as we found this more accessible), this is precisely the structure of $\Irr(\bG)$ that occurs in our proofs.

Secondly, we refine the method from \cite{CaspersGradient}.
We introduce a new property for a QMS of central multipliers on a compact quantum group which we call `approximate linearity with almost commuting intertwiners', see Definition \ref{Dfn=Almost}. The definition is certainly technical in nature, but it has some clear advantages. Namely, it is immediately clear that it is invariant under monoidal equivalence of quantum groups. A first consequence is that since the free orthogonal quantum groups $O_N^+$ are monoidally equivalent to $SU_q(2), q \in (0,1)$ with $q + q^{-1} = N$  the estimates from \cite{CaspersGradient} can be carried out on $SU_q(2)$. We also prove a couple of other stability properties, including free wreath products.

\begin{thm}\label{Thm=IntroStable}
Approximate linearity with almost commuting intertwiners of a QMS of central multipliers is stable under:
\begin{enumerate}
\item Monoidal equivalence.
\item Free products.
\item Taking dual quantum subgroups.
\item Free wreath products with $S_N^+$ (more precisely, Theorem \ref{Thm=WreathQMSdot}).
%\item Cartesian products with a finite dimensional quantum group.
\end{enumerate}
\end{thm}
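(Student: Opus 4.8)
The plan is to prove each of the four stability properties separately, since they concern rather different constructions, but in each case the strategy is the same: take the data witnessing approximate linearity with almost commuting intertwiners on the input quantum group(s) and produce the corresponding data on the output quantum group, while tracking the relevant $\cS_2$-norm estimates. Because the defining property (Definition \ref{Dfn=Almost}) is phrased entirely in terms of the fusion/representation-theoretic data of $\Irr(\bG)$ together with quantitative intertwiner estimates, the key observation throughout is that each of these four operations has a transparent and well-understood effect on $\Irr(\bG)$ and on the intertwiner spaces, so the estimates can be transported.

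\emph{Monoidal equivalence.} This is designed to be essentially immediate and should be stated first, since it is the conceptual reason the property was introduced. A monoidal equivalence $\bG \sim_{\mathrm{mon}} \bH$ gives a bijection between $\Irr(\bG)$ and $\Irr(\bH)$ preserving fusion rules, together with linear isomorphisms of the morphism spaces $\Mor(\alpha\otimes\beta,\gamma)$ that are compatible with composition, tensoring, and the categorical inner product up to the unitary structure. Since a QMS of central multipliers and the notion of approximate linearity with almost commuting intertwiners are formulated purely in terms of this categorical data (eigenvalues indexed by $\Irr$, and norm estimates on intertwiners), I would simply check that transporting the generating data along the monoidal equivalence preserves both the QMS structure and every inequality in Definition \ref{Dfn=Almost}. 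The content is verification rather than construction.

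\emph{Free products and dual quantum subgroups.} For the free product $\bG_1 \ast \bG_2$, the irreducibles are indexed by alternating words in $\Irr(\bG_1)\setminus\{\text{triv}\}$ and $\Irr(\bG_2)\setminus\{\text{triv}\}$, and a central multiplier is assembled from the two factors; I would take the generator to act as the ``sum'' of the two generators along such words (the natural free-product QMS, as in the amalgamation of length functions), and then bound the intertwiners of the free product in terms of those of the factors. Here the main technical point is to control the almost-commuting-intertwiner estimate across the concatenation of words, i.e.\ to show that the defect of commutation for a word is dominated by the sum of the defects on its letters. For dual quantum subgroups $\widehat{\bH} \leq \widehat{\bG}$, the category of $\bH$ embeds as a full fusion subcategory of that of $\bG$ (a subset of $\Irr(\bG)$ closed under tensor and conjugation), and one restricts the QMS and all intertwiner data to this subcategory; since the inequalities are inherited by subobjects, this should be the easiest of the four.

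\emph{Free wreath products.} This is where I expect the main obstacle, and it is correctly flagged as needing its own theorem (Theorem \ref{Thm=WreathQMSdot}). The fusion theory of a free wreath product $\bG \wr_* S_N^+$ is genuinely more intricate: by the work of Lemeux--Tarrago the irreducibles are described in terms of the free product combinatorics of $\Irr(\bG)$ together with the noncrossing-partition structure governing $S_N^+$, so the representation category is a kind of free wreath product of tensor categories rather than a simple union of the input data. The hard part will be to define the right central multiplier on this category and, above all, to prove the almost-commuting-intertwiner estimate: the intertwiner spaces now involve the $S_N^+$ (Temperley--Lieb / noncrossing) morphisms interacting with copies of the $\bG$-morphisms, and one must show the commutation defect of the generator with these composite intertwiners is still controlled linearly along the relevant fusion decompositions. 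My plan would be to reduce this, via the Lemeux--Tarrago description, to the free-product case already handled together with an explicit analysis of the Temperley--Lieb intertwiners of $S_N^+$, verifying that the $S_N^+$-part contributes a bounded, approximately linear perturbation; this reduction, and the bookkeeping of the $\cS_2$-estimates through it, is the substantive content deferred to Theorem \ref{Thm=WreathQMSdot}.
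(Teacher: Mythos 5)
Your overall architecture matches the paper's: monoidal equivalence and dual quantum subgroups are handled exactly as you describe (pure transport, respectively restriction, of the categorical data, as in Theorems \ref{Thm=Monoidal} and \ref{Thm=SubGroupdot}), and your plan for free wreath products --- reduce via Lemeux--Tarrago to a monoidal equivalence with a dual quantum subgroup of $\bG \ast SU_q(2)$ and then invoke the other stability properties together with a Temperley--Lieb analysis --- is precisely the paper's short proof of Theorem \ref{Thm=WreathQMSdot}; the only adjustment is that the Temperley--Lieb work is done once and for all on $SU_q(2)$ in Section \ref{Sect=QMS} rather than on $S_N^+$ directly, so the wreath product step is in fact the easiest of the four once the others are in place.

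The genuine gap is in the free product case. Your stated mechanism --- ``the defect of commutation for a word is dominated by the sum of the defects on its letters'' --- would not deliver the required estimate. Definition \ref{Dfn=Almost} demands a bound of the form $C\qdim(\beta)^{-1}$ for the whole reduced word $\beta = \beta_1\cdots\beta_l$, and since the quantum dimension is multiplicative, $\qdim(\beta)^{-1} = \prod_r \qdim(\beta_r)^{-1}$ is far smaller than $\sum_r \qdim(\beta_r)^{-1}$; a sum of letterwise defects is therefore hopelessly too large. What the paper actually proves, by a case analysis on the shape of $\beta$ relative to $\alpha$ and $\gamma$, is that \emph{at most one} letter of $\beta$ contributes a nonzero defect --- the unique letter sandwiched between the surviving tails of $\alpha$ and $\gamma$ --- while in all other configurations both the eigenvalue identity and the intertwiner commutation hold \emph{exactly} (defect $0$), by the Leibniz rule $\Delta_\beta = \sum_r \Delta_{\beta_r}$ and the fact that the relevant intertwiners act on disjoint tensor legs. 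The single-letter bound $C\qdim(\beta_{i+1})^{-1}$ then converts to $C'\qdim(\beta)^{-1}$ only because in that configuration the remaining letters of $\beta$ are forced to equal $\overline{\alpha}_k,\ldots,\overline{\alpha}_{k-i+1},\overline{\gamma}_j,\ldots,\overline{\gamma}_1$, so their quantum dimensions are controlled by $\alpha$ and $\gamma$ alone. Relatedly, you do not address the choice of the exceptional sets: $A$ must be taken to be the very thin set of words $\overline{\alpha}_k\cdots\overline{\alpha}_{k-i+1}\beta_{i+1}\overline{\gamma}_j\cdots\overline{\gamma}_1$ with middle letter in the factor's $A$-set, since the naive choice ``all words containing a bad letter'' would destroy both the polynomial growth condition \eqref{Eqn=AlmostThree} and the square-summability of $\qdim(\cdot)^{-1}$ over $A$. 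Without the localization observation and this choice of $A$, $A_{00}$ and of the $v$-maps, the free product step does not go through.
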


The proof for free wreath products is a combination of \cite[Theorem 5.11]{LemeuxTarrago}, the other stability properties and the fact that $SU_q(2)$ carries a QMS that is approximately linear with almost commuting intertwiners. To prove the latter statements we provide a conceptual way to construct QMS's from suitable families of ucp maps. This makes use of generating functionals and differentiation at 0. The proof also simplifies \cite[Section 6.1]{CaspersGradient}. The author is indepted to Adam Skalski for sharing this argument.

We then show that indeed the strong solidity and Akemann-Ostrand type results as in the diagram above are implied. We first show the following (following the path (2)).

\begin{thm}\label{Thm=IntroSolid}
Let $\bG$ be a  compact quantum group of Kac type such that $L_\infty(\bG)$ has the W$^\ast$-CBAP. Suppose that $\bG$ carries a QMS of central multipliers that is approximately linear with almost commuting intertwiners and which is immediately $L_2$-compact. Then $L_\infty(\bG)$ is strongly solid.
\end{thm}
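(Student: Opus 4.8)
The plan is to split the proof into two essentially independent pieces and then to combine them, since the whole substance of the statement is the passage from the representation-theoretic hypotheses to a single analytic condition. The first piece is the reduction of strong solidity to the \emph{immediately gradient-$\cS_2$} condition of \cite{CaspersGradient}: once one knows that the QMS produces a gradient bimodule that is ``$\cS_2$-close'' to the coarse bimodule $\cHcoarse$ for small times, strong solidity of a Kac type $L_\infty(\bG)$ with the W$^\ast$-CBAP follows from the deformation/rigidity machinery of \cite{CaspersGradient} (itself following \cite{OzawaPopaAJM} and \cite{Peterson}). In that machinery the W$^\ast$-CBAP enters only through the weak compactness of the normalizer action (the Ozawa--Popa / \cite{BHV} step), the Kac assumption makes the relevant trace tracial so that the Hilbert--Schmidt and bimodule bookkeeping is clean, and the gradient-$\cS_2$ input plays the role of the malleable deformation and supplies the spectral gap / transversality estimate. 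Thus the real task is to verify that ``approximately linear with almost commuting intertwiners'' together with immediate $L_2$-compactness forces the immediately gradient-$\cS_2$ condition.

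To do that I would first construct, following Cipriani--Sauvageot, the closable derivation $\partial$ associated with the generator $\Delta$ of the QMS, with values in a Hilbert $L_\infty(\bG)$-bimodule $\cH$, so that $\Delta = \partial^\ast \partial$ and $\Phi_t = e^{-t\Delta}$. Because the multipliers are central, $\Delta$ acts on the isotypic components as multiplication by a length function $\ell \colon \Irr(\bG) \to [0,\infty)$, and both $\cH$ and $\partial$ diagonalise along $\Irr(\bG)$ and decompose according to the fusion rules. The immediately gradient-$\cS_2$ condition then unwinds into a Hilbert--Schmidt estimate: a family of operators built from $\partial$ and $\Phi_t$, indexed by the irreducibles and by the fusion of pairs of irreducibles, must lie in $\cS_2$ with norm controlled uniformly as $t \to 0^+$. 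Here approximate linearity of $\ell$ provides the linear growth that turns the weights $e^{-t\ell(\alpha)}$ into a geometrically decaying family, the almost-commuting-intertwiner hypothesis is exactly what bounds the off-diagonal bimodule coefficients coming from the intertwiners of the tensor category $\Irr(\bG)$, and immediate $L_2$-compactness ($\ell(\alpha)\to\infty$, so that $\Phi_t$ is compact on $L_2(\bG)$ for every $t>0$) guarantees that the deformation genuinely moves mass to infinity. Combining these reduces the estimate to summability of Hilbert--Schmidt type expressions, schematically $\sum_{\alpha}(\dim\alpha)^2 e^{-2t\ell(\alpha)}$ weighted by the intertwiner defect terms.

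The main obstacle is precisely this $\cS_2$ estimate: the crude bound on $\cH$ need not be Hilbert--Schmidt, and the saving must come entirely from the fact that the intertwiners only \emph{almost} commute with the semigroup, so that the commutator defects decay rapidly enough to be summed against the geometric factor $e^{-2t\ell}$. Making this quantitative — isolating the leading ``linear'' part of $\partial$, which is responsible for the coarse piece, from the lower-order intertwiner corrections, which must land in trace class after squaring — is the technical heart, and it is where the two halves of the hypothesis have to be used in tandem rather than separately. Once the immediately gradient-$\cS_2$ condition is in hand, I would feed it, the Kac assumption, and the weak compactness extracted from the W$^\ast$-CBAP into the strong solidity theorem of \cite{CaspersGradient}, concluding that the normalizer of any diffuse amenable subalgebra of $L_\infty(\bG)$ generates an amenable von Neumann algebra.
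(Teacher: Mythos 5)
Your proposal follows essentially the same route as the paper: Theorem \ref{Thm=Main} shows that approximate linearity with almost commuting intertwiners implies the immediately gradient-$\cS_2$ condition (by splitting $\Psi_t^{a,c}$ via the $v$-bijections into a linearity-defect term and an intertwiner-defect term, each $O(\qdim(\beta)^{-1})$ on $A$ and vanishing off $A\cup A_{00}$, then summing against $e^{-2t\Delta_\beta}$ using the polynomial growth condition), and Theorem \ref{Thm=Gradient} feeds this, together with the W$^\ast$-CBAP, the Kac assumption and the compact resolvent, into the Cipriani--Sauvageot derivation and the machinery of \cite{CaspersGradient}, \cite{OzawaPopaAJM}. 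The only slight misplacement is that immediate $L_2$-compactness is not needed for the gradient-$\cS_2$ estimate itself but only in the second step, where the compact resolvent of $\Delta$ drives the rigidity argument.
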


Then we show the following theorem using non-commutative Riesz transforms (see also \cite{CIW}). Since the Akemann-Ostrand property could be of independent interest we record it in this paper in a separate section.

\begin{thm}\label{Thm=IntroAO}
Let $\bG$ be a  compact quantum group of Kac type such that $C_r(\bG)$ is locally reflexive. Suppose that $\bG$ carries a QMS of central multipliers that is approximately linear with almost commuting intertwiners and which is immediately $L_2$-compact. Then $L_\infty(\bG)$ satisfies the Akemann-Ostrand property (more precisely AO$^{+}$ from \cite{IsonoTrams}).
\end{thm}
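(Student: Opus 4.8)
The plan is to follow path (3) of the diagram: manufacture a closable derivation from the QMS, observe that the associated gradient bimodule sits weakly inside the coarse bimodule, and then use the (automatically isometric) non-commutative Riesz transform of \cite{CIW} to compress the coarse representation of $C_r(\bG)\minotimes C_r(\bG)^{\op}$ down to $B(L_2(\bG))$ with compact defect, which is exactly the datum required by AO$^+$.

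First I would apply the Cipriani--Sauvageot construction \cite{CiprianiSauvageot} to the generator $\Delta$ of the QMS. Since the multipliers are central, $\Delta$ acts on $L_2(\bG)=\bigoplus_{\alpha\in\Irr(\bG)}(H_\alpha\otimes\overline{H_\alpha})$ as a scalar $c_\alpha$ on each spectral block, and immediate $L_2$-compactness is the statement that $c_\alpha\to\infty$ with $c_\alpha>0$ for $\alpha$ nontrivial, so that $\ker\Delta=\C 1$. This yields a closable derivation $\partial\colon\Pol(\bG)\to H_\partial$ into an $L_\infty(\bG)$-bimodule with $\partial^*\partial=\Delta$. The hypothesis that the QMS is approximately linear with almost commuting intertwiners is what lets me identify $H_\partial$, in the $\cS_2$-sense, as weakly contained in the coarse bimodule $\cHcoarse\cong L_2(\bG)\otimes L_2(\bG)$; this is precisely the immediately gradient-$\cS_2$ input used for Theorem~\ref{Thm=IntroSolid}, and it forces the bimodule representation $\pi_\partial$ of $C_r(\bG)\minotimes C_r(\bG)^{\op}$ on $H_\partial$ to be continuous for the minimal norm.

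The second step exploits the identity $\|\partial\xi\|^2=\langle\Delta\xi,\xi\rangle$, which shows that the Riesz transform $R:=\overline{\partial\,\Delta^{-1/2}}$ is an isometry from $L_2(\bG)\ominus\C 1$ onto $\overline{\mathrm{Ran}\,\partial}$; I extend it to an isometry on $L_2(\bG)$ at the cost of a rank-one, hence compact, correction. I then define the candidate map for AO$^+$ by compression, $\theta(a\otimes b^{\op}):=R^*\,\pi_\partial(a\otimes b^{\op})\,R$. As the compression of a $*$-homomorphism by an isometry, $\theta$ is unital completely positive, and by the previous paragraph it is defined and continuous on $C_r(\bG)\minotimes C_r(\bG)^{\op}$. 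The crux is to compare $\theta$ with the genuine two-sided multiplication $a\otimes b^{\op}\mapsto\lambda(a)\rho(b)$ on $L_2(\bG)$, where $\rho(b)=J\lambda(b)^*J$; the Kac assumption guarantees that $J$ implements the commutant and that the left and right actions are symmetric.

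The comparison reduces to showing that $R$ intertwines the left and right actions up to compacts. Using the Leibniz rule $\partial\lambda(a)=\lambda_{H_\partial}(a)\partial+m(\partial a)$, where $m(\partial a)\colon\xi\mapsto(\partial a)\cdot\xi$ is Hilbert--Schmidt by the gradient-$\cS_2$ property, together with the compactness of $[\Delta^{-1/2},\lambda(a)]$ -- which is where approximate linearity of $c_\alpha$ in the fusion length and the almost commuting intertwiners are genuinely needed -- one obtains $R\lambda(a)-\lambda_{H_\partial}(a)R\in\cS_2$, and symmetrically for $\rho$. Feeding these into $\theta(a\otimes b^{\op})=R^*\lambda_{H_\partial}(a)\rho_{H_\partial}(b)R$ and using $R^*R=1-P_{\C1}$ yields $\theta(a\otimes b^{\op})-\lambda(a)\rho(b)\in K(L_2(\bG))$. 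Since $C_r(\bG)$ is locally reflexive by hypothesis, this ucp map with compact defect is exactly the data of condition AO$^+$ in the sense of \cite{IsonoTrams}, finishing the proof. The main obstacle is this final step: verifying that the Leibniz defect $m(\partial a)\,\Delta^{-1/2}$ is actually Hilbert--Schmidt and that $[\Delta^{-1/2},\lambda(a)]$ is compact, i.e. converting the qualitative weak containment into genuine compactness uniformly enough to survive the compression by $R$. This is the technical heart, and it is exactly what approximate linearity with almost commuting intertwiners is designed to deliver.
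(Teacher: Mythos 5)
Your proposal follows essentially the same route as the paper: the paper's proof of Theorem \ref{Thm=AO} is precisely the Riesz-transform argument of \cite[Section 5]{CIW} that you describe, namely weak containment of the gradient bimodule in the coarse bimodule (via the immediately gradient-$\cS_2$ property supplied by Theorem \ref{Thm=Main}), compression of the coarse representation by the isometry $S=\partial\Delta^{-1/2}$ to produce the ucp map $\theta$, and compactness of the defect $T_{x,y}:\xi\mapsto S(x\xi y)-xS(\xi)y$, which is exactly your intertwining-up-to-compacts step and is where the paper invokes subexponential growth (a consequence of \eqref{Eqn=AlmostOneB}, cf.\ Remark \ref{Rmk=SubexpAlmost}) in place of your commutator estimate for $[\Delta^{-1/2},\lambda(a)]$. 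The technical heart you flag is the same one the paper defers to the (suitably adapted, fusion-rule rather than filtration-based) proof of \cite[Theorem 5.12]{CIW}, so your outline is a correct and faithful account of the intended argument.
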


In \cite{IsonoTrams} it was proved in the factorial case that together with the W$^\ast$-CBAP Theorem \ref{Thm=IntroAO} implies strong solidity. So in that   case Theorem \ref{Thm=IntroAO} implies Theorem \ref{Thm=IntroSolid}.

\vspace{0.3cm}

We now turn to the examples. Most of the work is contained in the following theorem from which a diversity of results follow by stability properties. Its proof heavily uses the estimates \cite[Appendix]{VaesVergnioux}; it is interesting that these estimates are precisely sharp enough for our purposes.

\begin{thm}\label{Thm=IntroSUq2}
$SU_q(2)$ carries a QMS of central multipliers that is approximately linear with almost commuting multipliers and immediately $L_2$-compact.
\end{thm}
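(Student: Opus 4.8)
The plan is to construct an explicit QMS on $SU_q(2)$ and verify the three required properties by reducing everything to the combinatorics of $\Irr(SU_q(2))$, which is the same fusion category as for $O_N^+$ (indexed by $\NN$ with the $SU(2)$ fusion rules). Since the property `approximate linearity with almost commuting intertwiners' is invariant under monoidal equivalence by Theorem \ref{Thm=IntroStable}(1), and $SU_q(2)$ is monoidally equivalent to $O_N^+(F)$ with $q+q^{-1}=N$, the sharp length-function estimates of \cite[Appendix]{VaesVergnioux} for $O_N^+$ become available on the category side. First I would fix the natural length function $\ell$ on $\Irr(SU_q(2))=\NN$ (namely $\ell(n)=n$, the `word length' coming from tensor powers of the fundamental corepresentation) and build the family of central multipliers $e^{-t\psi}$, where $\psi$ is the generating functional with $\psi|_{\Irr}=\ell$. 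The conceptual construction of QMS's from families of ucp maps via generating functionals and differentiation at $0$, promised after Theorem \ref{Thm=IntroStable}, is what I would invoke to guarantee that $(T_t)$ is a genuine GNS-symmetric QMS of central (hence Kac-compatible) multipliers.

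Next I would address the three clauses separately. Immediate $L_2$-compactness amounts to checking that for each $t>0$ the multiplier $e^{-t\ell(n)}$ decays fast enough (geometrically) in $n$ so that, after weighting by the quantum dimensions $[n+1]_q$ which grow like $q^{-n}$, the operator $T_t$ is Hilbert--Schmidt on $L_2(SU_q(2))$; this is a direct summation estimate $\sum_n [n+1]_q^2 e^{-2t n}<\infty$, which holds for all $t>0$ because the exponential beats the fixed geometric growth. The heart of the matter is the gradient structure: `approximate linearity' requires that the cocycle/derivation $\partial$ associated to $(T_t)$ satisfies a near-additivity relation along tensor products of corepresentations, and `almost commuting intertwiners' requires that the intertwiner maps implementing fusion $n\otimes 1\to (n+1)\oplus(n-1)$ commute with $\partial$ up to an error that is summable in the relevant Hilbert--Schmidt or $\cS_2$-norm. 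Here I would carefully transcribe the norm estimates on the fusion intertwiners (the $6j$-symbols / Clebsch--Gordan coefficients for $SU_q(2)$) from \cite[Appendix]{VaesVergnioux}, where they are computed precisely for $O_N^+(F)$, and show they give the required decay.

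The main obstacle will be the `almost commuting intertwiners' condition, since this is exactly the place where the delicate cancellation must occur: one must control the difference between applying $\partial$ before versus after the fusion intertwiner, and show this commutator is small in norm uniformly enough to sum. This is where the remark that the \cite[Appendix]{VaesVergnioux} estimates are `precisely sharp enough' becomes essential — I would expect that a naive bound on the Clebsch--Gordan data is off by a borderline factor, and that one needs the exact asymptotics (rather than order-of-magnitude bounds) of these coefficients as the representation labels grow, combined with the geometric decay of $e^{-t\ell}$, to close the estimate. I would organize this final step by reducing the commutator norm to a weighted sum over fusion channels, inserting the sharp coefficient asymptotics, and verifying convergence; the monoidal equivalence then transports the conclusion back to $SU_q(2)$ itself, completing all three clauses.
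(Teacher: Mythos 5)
There is a genuine gap at the very first step: you take for granted that the word-length function $\ell(n)=n$ on $\Irr(SU_q(2))=\mathbb{N}_{\geq 0}$ is a generating functional, i.e.\ conditionally negative definite on $\Pol(SU_q(2))$, so that $e^{-t\ell}$ defines a semi-group of ucp multipliers. This is not known and is precisely the obstacle the paper's construction is designed to circumvent. The paper instead starts from the concrete family of \emph{states} $\mu_t$ of de Commer--Freslon--Yamashita, $\mu_t(u^\alpha_{ij})=\bigl(U_\alpha(q^t+q^{-t})/U_\alpha(q+q^{-1})\bigr)^3\delta_{ij}$, and differentiates at the point where $\mu_t=\epsilon$ to obtain a genuine generating functional with eigenvalues $\Delta_\alpha=U_\alpha'(q+q^{-1})/U_\alpha(q+q^{-1})$. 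By the Fima--Vergnioux formula these are only \emph{asymptotically} linear in $\alpha$, with corrections of order $q^{2\alpha}\approx\qdim(\alpha)^{-2}$; this is exactly why the notion is called ``approximate'' linearity and why the estimate \eqref{Eqn=AlmostOne} carries the factor $\qdim(\beta)^{-1}$. If the exact length function were available as a generator, condition \eqref{Eqn=AlmostOne} would hold trivially with left-hand side zero, and most of Section \ref{Sect=QMS} would be unnecessary --- a sign that your starting assumption is doing illegitimate work. Your proposal therefore needs to be repaired at its foundation: either exhibit a c.n.d.\ functional with the stated eigenvalues (which you do not do), or follow the state-differentiation route and then prove approximate linearity for the resulting non-linear eigenvalues.

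Two further points. First, your verification of immediate $L_2$-compactness is off: the sum $\sum_n [n+1]_q^2e^{-2tn}$ does \emph{not} converge for all $t>0$, since $[n+1]_q\sim q^{-n}$ grows geometrically and is only beaten by $e^{-2tn}$ when $t>\log(1/q)$; moreover the relevant multiplicity weight is the classical dimension $n_\alpha^2$, not the quantum dimension. Fortunately immediate $L_2$-compactness only requires the generator to have compact resolvent, i.e.\ $\Delta_\alpha\to\infty$, which is immediate from linear growth, so the conclusion survives but your argument for it does not. Second, the estimates in \cite[Appendix]{VaesVergnioux} cover only the case where both outer representations are the fundamental one ($\alpha=\gamma=1$); the paper must extend them to arbitrary $\alpha,\gamma$ by an induction over fusion channels (Lemma \ref{Lem=InterIso} and Proposition \ref{Prop=AlmostCom}), decomposing $V^{\alpha+1,\beta}_{\beta+k}$ through $1\otimes\alpha\otimes\beta$ and summing the errors. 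Your plan to ``transcribe'' the Vaes--Vergnioux estimates does not account for this induction, which is a substantive part of the proof rather than bookkeeping.
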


We can now harvest our results using the stability properties and several monoidal equivalence and isomorphism results for compact quantum groups that have been proved by others, most notably \cite{Raum}, \cite{RaumWeber}, \cite{Bichon}, \cite{BichonRijdtVaes}, \cite{LemeuxTarrago}.

\begin{thm}\label{Thm=IntroExample}
The following (Kac type) compact quantum groups are strongly solid and satisfy AO$^+$:
\begin{enumerate}
\item\label{Item=ClassI} All 7 series free orthogonal easy quantum groups classified in \cite{WeberAdvances}, \cite{BanicaSpeicher} under the names  $O_{N_3}^+, S_{N_5}^+, H_{N_5}^+, B_{N_4}^+, S_{N_5}'^+, B_{N_4}'^{+}$ and $B_{N_4}^{\# +}$  for $N_3 \geq 3, N_4 \geq 4, N_5  \geq 5$ (see \cite{BrannanDocumenta}, \cite{IsonoIMRN}, \cite{IsonoTrams},  \cite{FimaVergnioux}, \cite{CaspersGradient}).
\item The quantum reflection groups $H_N^{s+} \simeq \widehat{\mathbb{Z}_s} \wr_\ast S_N^+$ for $N \geq 5, \infty \geq s \geq 2$ where $\mathbb{Z}_\infty = \mathbb{Z}$.
\item The free unitary quantum groups $U_N^+$ for $N \geq 3$ (see \cite{IsonoIMRN}, \cite{IsonoTrams}, \cite{CaspersGradient}).
\end{enumerate}
\end{thm}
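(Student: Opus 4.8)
The plan is to read Theorem \ref{Thm=IntroExample} as a harvesting statement and prove it in three logical steps. First, for every $\bG$ in the list I would produce a QMS of central multipliers on $\bG$ that is approximately linear with almost commuting intertwiners and immediately $L_2$-compact, by expressing $\bG$ --- up to monoidal equivalence and isomorphism --- in terms of the single base point $SU_q(2)$ of Theorem \ref{Thm=IntroSUq2} together with amenable pieces, using only the four operations of Theorem \ref{Thm=IntroStable}. Second, I would record that each such $\bG$ is of Kac type, has $L_\infty(\bG)$ with the W$^\ast$-CBAP, and has $C_r(\bG)$ locally reflexive. Third, strong solidity then follows from Theorem \ref{Thm=IntroSolid} and AO$^+$ from Theorem \ref{Thm=IntroAO}.

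For the first step I would go family by family. For $O_N^+$ I use the monoidal equivalence $O_N^+ \sim_{\mathrm{mon}} SU_q(2)$ with $q+q^{-1}=N$ from \cite{BichonRijdtVaes} together with Theorem \ref{Thm=IntroStable}(1). For $S_N^+$ I pass first to the even (integer-spin) part of $\Rep(SU_q(2))$, which is the dual quantum subgroup $PSU_q(2)=SO_q(3)$, apply Theorem \ref{Thm=IntroStable}(3), and then transport along the monoidal equivalence $SO_q(3)\sim_{\mathrm{mon}} S_N^+$ (for a suitable $q$) via Theorem \ref{Thm=IntroStable}(1). The remaining orthogonal easy quantum groups are reduced to these two by the classification of \cite{Raum}, \cite{RaumWeber}: $B_N^+\simeq O_{N-1}^+$ and $H_N^+=\widehat{\mathbb{Z}_2}\wr_\ast S_N^+$, while $S_N'^{+}, B_N'^{+}$ and $B_N^{\#+}$ are free or direct products of $O_{N-1}^+$ or $S_N^+$ with a finite abelian quantum group; free products are handled by Theorem \ref{Thm=IntroStable}(2), and a direct product with a finite quantum group by an elementary tensor-QMS argument (the finite factor contributes only finitely many, hence bounded, eigenvalues, so all required properties are inherited from the infinite factor). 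The quantum reflection groups $H_N^{s+}=\widehat{\mathbb{Z}_s}\wr_\ast S_N^+$ follow from Theorem \ref{Thm=IntroStable}(4) (Theorem \ref{Thm=WreathQMSdot}), once $S_N^+$ is in hand and $\widehat{\mathbb{Z}_s}$ is equipped with the QMS generated by a proper conditionally negative length on $\mathbb{Z}_s$ (for $s=\infty$, $\psi(n)=n^2$ on $\mathbb{Z}$). Finally, for $U_N^+$ I would invoke the free complexification $U_N^+=\widetilde{O_N^+}$, which exhibits $U_N^+$ as the dual quantum subgroup of the free product $O_N^+\ast\widehat{\mathbb{Z}}$ generated by $z u$; combining Theorem \ref{Thm=IntroStable}(2) and (3) then gives the QMS, consistently with the treatments already available in \cite{CaspersGradient}, \cite{IsonoIMRN}, \cite{IsonoTrams}.

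For the second step, every quantum group in the list is unimodular and hence of Kac type --- in contrast with the base point $SU_q(2)$, to which the solidity and AO$^+$ theorems are never applied. The W$^\ast$-CBAP is known for $O_N^+$, $U_N^+$ and $S_N^+$ and is stable under free products, monoidal equivalence, dual quantum subgroups and free wreath products (see \cite{LemeuxTarrago} and the references therein), so it holds throughout the list; local reflexivity of $C_r(\bG)$ follows from exactness of these quantum groups, which propagates along the same operations. With these hypotheses verified, Theorem \ref{Thm=IntroSolid} yields strong solidity and Theorem \ref{Thm=IntroAO} yields AO$^+$ for each $\bG$.

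The main obstacle is not a hard estimate --- those live in Theorem \ref{Thm=IntroSUq2} --- but the book-keeping of immediate $L_2$-compactness through the reductions, since Theorem \ref{Thm=IntroStable} as stated only propagates approximate linearity with almost commuting intertwiners. Here I would use that, for a QMS of central multipliers $T_t=\sum_{\alpha} e^{-t\lambda_\alpha}P_\alpha$ with $P_\alpha$ the finite-rank projections onto $\Irr(\bG)$, immediate $L_2$-compactness is equivalent to $\lambda_\alpha\to\infty$, i.e.\ to properness of the underlying generating functional. I would then check that properness survives each operation: it is invariant under monoidal equivalence (identical fusion data), persists along alternating words for free products, along sub-fusion-rings for dual quantum subgroups, and along the fusion description of free wreath products in \cite{LemeuxTarrago}. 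The only genuinely delicate points are to match the parameters $q$, $N$, $s$ across the structural isomorphisms and to ensure that the amenable building blocks enter with a proper length, so that no accumulation of $\lambda_\alpha$ at a finite value is introduced.
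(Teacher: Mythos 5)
Your proposal follows essentially the same route as the paper: reduce every example to the single base point $SU_q(2)$ (Theorem \ref{Thm=IntroSUq2}) by the four stability operations of Theorem \ref{Thm=IntroStable}, using the monoidal equivalences $O_N^+\sim SU_q(2)$ and $S_N^+\sim SO_q(3)$ (the latter realized as the even part of $\Rep(SU_q(2))$, exactly as in the paper's proof of Theorem \ref{Thm=Easy}), the isomorphisms $B_N^+\simeq O_{N-1}^+$, $S_N'^{+}\simeq S_N^+\times\mathbb{Z}_2$, $B_N'^{+}\simeq O_{N-1}^+\times\mathbb{Z}_2$, $B_N^{\#+}\simeq O_{N-1}^+\ast\mathbb{Z}_2$ from \cite{Raum}, \cite{WeberAdvances}, and the free wreath product machinery of \cite{LemeuxTarrago} for $H_N^{s+}$; strong solidity and AO$^+$ then come from Corollary \ref{Cor=KacStronglsySolid} and Theorem \ref{Thm=AO}. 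Your bookkeeping of immediate $L_2$-compactness as properness of the eigenvalue function, and the remark that the direct products with $\mathbb{Z}_2$ are harmless, are also consistent with (indeed slightly more explicit than) what the paper does.

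There is, however, one concrete slip: equipping $\widehat{\mathbb{Z}}$ with the generating functional $\psi(n)=n^2$ does \emph{not} produce a QMS that is approximately linear with almost commuting intertwiners. For $\Irr(\widehat{\mathbb{Z}})=\mathbb{Z}$ one has $\beta_1=\alpha+\beta$, $\beta_2=\alpha+\beta+\gamma$, $v(\beta_1)=\beta+\gamma$, and
\[
\Delta_\beta-\Delta_{\beta_1}-\Delta_{v(\beta_1)}+\Delta_{\beta_2}
=\beta^2-(\alpha+\beta)^2-(\beta+\gamma)^2+(\alpha+\beta+\gamma)^2=2\alpha\gamma,
\]
a nonzero constant. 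Since $\qdim\equiv 1$ on $\Irr(\widehat{\mathbb{Z}})$, condition \eqref{Eqn=AlmostThree} forces the exceptional set $A$ to be finite, so \eqref{Eqn=AlmostOneVanish} must hold \emph{exactly} for all but finitely many $\beta$ --- which $2\alpha\gamma\neq 0$ violates. The fix is to take the word length $\psi(n)=|n|$ (still proper and conditionally negative definite), for which the second-order difference vanishes identically once $|\beta|>|\alpha|+|\gamma|$ and the intertwiner condition is trivial because all morphism spaces are one-dimensional. This correction is needed both for $H_N^{\infty+}=\widehat{\mathbb{Z}}\wr_\ast S_N^+$ and for your free complexification argument $\widehat{U_N^+}<\widehat{\widehat{\mathbb{Z}}\ast O_N^+}$ (for finite $s$ the issue is vacuous, as $\widehat{\mathbb{Z}_s}$ has finitely many irreducibles and one may take $A_{00}=\Irr$). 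With that single change the argument goes through and matches the paper's.
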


The selection of examples presented in Theorem \ref{Thm=IntroExample} is a bit random and not exhaustive.  We have chosen to present examples that relate to attempts  to classify easy quantum groups. The representation category of the families in Theorem \ref{Thm=IntroExample} \eqref{Item=ClassI} are precisely the ones whose representation categories can be described in terms of non-colored, non-crossing partitions. One may wonder what happens in case more colors are added to the partitions like in \cite{FreslonTrams}, \cite{TarragoWeber}. Our theorem shows that already some cases are covered.

It should be mentioned that part of Theorem \ref{Thm=IntroExample} was proved in the literature already using different methods and we have given references in the theorem. Our method gives a unified way that treats all examples at once.  To the knowledge of the author strong solidity for $H_N^+$ and the more general quantum reflection groups has not been covered and neither is  AO$^+$.  Other new examples are for instance all free wreath products of these examples with $S_N^+$.

\vspace{0.3cm}

\noindent {\it Structure.} Section \ref{Sect=Prelim} introduces preliminary notation. In Section \ref{Sect=Almost} we introduce almost linearity with almost commuting intertwiners and show stability properties. We conclude most of Theorem \ref{Thm=IntroStable} except for the wreath products.
Section \ref{Sect=Solid} contains the implications for strong solidity and proves Theorem \ref{Thm=IntroSolid}.
 In Section \ref{Sect=QMS} we show that $SU_q(2)$ carries a good QMS and prove Theorem \ref{Thm=IntroSUq2}. From this we can conclude the proof of the wreath product case in Theorem \ref{Thm=IntroStable}  as well as strong solidity of the examples of Theorem \ref{Thm=IntroExample}. This is done in Section \ref{Sect=Consequences}. In Section \ref{Sect=AO} we prove the corresponding statements for the Akemann-Ostrand property. This concludes Theorem \ref{Thm=IntroAO}.

\vspace{0.3cm}

\noindent {\bf Acknowledgements.} The author wishes to express his gratitude to Amaury Freslon, Tao Mei, Adam Skalski, Mateusz Wasilewski and Moritz Weber for their comments and/or useful discussions  that led to this paper. The author also thanks Gerrit Vos and the referee for some final improvements of the text.

\vspace{0.3cm}

\section{Preliminaries}\label{Sect=Prelim}

$\delta(x \in X)$ is the function that is 1 if $x \in X$ and 0 otherwise. Inner products are linear in the left leg. For $\xi, \eta$ vectors in a Hilbert space $H$ we write $\omega_{\xi, \eta}(x) = \langle x \xi, \eta \rangle$.

Standard theory of von Neumann algebras can be found in \cite{TakI}, \cite{TakII}. For operator spaces we refer to \cite{EffrosRuan}, \cite{Pisier}. The abbreviation ucp stands for unital completely positive.

\subsection{Finite dimensional approximations and strong solidity} See \cite{BrownOzawa} for the following notions.

\begin{dfn}
 We say that a von Neumann algebra $M$ has the {\it W$^\ast$-CBAP} (W$^\ast$ completely bounded approximation property) if there exists a  net $(\Phi_i)_i$ of normal completely bounded finite rank maps $M \rightarrow M$ such that:
\begin{enumerate}
\item There exists $\Lambda \geq 1$ such that for all $i$ we have $\Vert \Phi_i \Vert_{cb} \leq \Lambda$;
\item For every $x \in M$ we have $\Phi_i(x) \rightarrow x$ $\sigma$-weakly.
\end{enumerate}
$\Lambda$ is called the Cowling-Haagerup constant.
If $\Lambda = 1$ then we say that $M$ has the {\it W$^\ast$-CCAP} (W$^\ast$ completely contractive approximation property).
\end{dfn}

For quantum groups of Kac type the  W$^\ast$-CBAP (resp.  W$^\ast$-CCAP) is equivalent to weak amenability of the quantum group (resp. weak amenability with Cowling-Haagerup constant 1). For the Haagerup property, see also \cite{CaspersSkalskiIMRN}, \cite{OkayasuTomatsu}.

\begin{dfn}
We say that a finite von Neumann algebra with faithful normal state $(M, \tau)$ has the {\it Haagerup property} if there exists a net $(\Phi_i)_i$ of normal ucp maps $M \rightarrow M$ such that $\tau \circ \Phi_i = \tau$, such that $\Phi_i$ is compact as a map $L_2(M, \tau) \rightarrow L_2(M, \tau)$ and such that for every $x \in M$ we have $\Phi_i(x) \rightarrow x$ strongly.
\end{dfn}

We further need the notions of solidity (see \cite{BrownOzawa}, \cite{OzawaActa}) and strong solidity as in the next definition.

\begin{dfn}
A finite von Neumann algebra $M$ is called {\it strongly solid} if for every diffuse amenable von Neumann subalgebra $P \subseteq M$ we have that ${\sf Nor}_M(P)''$ is amenable, where the normalizer is defined as
\[
{\sf Nor}_M(P) = \{ u \in M \mid u \textrm{ unitary s.t. } u P u^\ast = P \}.
\]
\end{dfn}

%In \cite{JolissaintMartin}, \cite{CaspersSkalskiCMP} it was proved that if a von Neumann algebra has the Haagerup property then this may always be witnessed by a

\subsection{Compact quantum groups and represenations} The theory of compact quantum groups has been established by Woronowicz \cite{Woronowicz}.

\begin{dfn}
A compact quantum group $\bG$ is a pair $(C(\bG), \Delta_{\bG})$ of a unital C$^\ast$-algebra $C(\bG)$ and a unital $\ast$-homomorphism $\Delta_\bG: C(\bG) \rightarrow C(\bG) \otimes_{{\rm min}} C(\bG)$ (the comultiplication) satisfying $(\Delta_\bG \otimes \id) \circ \Delta_\bG = (\id \otimes \Delta_\bG) \circ \Delta_\bG$ (coassociativity) and such that both $\Delta_\bG(C(\bG))  (C(\bG) \otimes 1)$ and $\Delta_\bG(C(\bG)) (1 \otimes C(\bG))$ are dense in $C(\bG) \otimes_{{\rm min}} C(\bG)$.
\end{dfn}

A compact quantum group $\bG$ admits a unique state $\varphi$ on $C(\bG)$ called the {\it Haar state} which satisfies left and right invariance
\[
(\varphi \otimes \id) \circ \Delta_\bG(x) = \varphi(x) 1 = (\id \otimes \varphi  ) \circ \Delta_\bG(x).
\]
$\bG$ is called {\it Kac} if $\tau$ is tracial.
We let $C_r(\bG) = \pi_\varphi(C(\bG))$ and $L_\infty(\bG) = \pi_\varphi(C(\bG))''$ be the C$^\ast$-algebra and von Neumann algebra generated by the GNS-representation $\pi_\varphi$ of $\varphi$.  A (finite dimensional unitary) representation of $\bG$ is a unitary element $u \in C(\bG) \otimes M_n(\mathbb{C})$ such that $(\Delta_\bG \otimes \id)(u) = u_{13} u_{23}$ where $u_{23} = 1 \otimes u$ and $u_{13}$ is $u_{23}$ with the flip map applied to its first two tensor legs. We also set $u_{12} = u \otimes 1_n$. All representations are assumed to be unitary and finite dimensional and we shall just call them {\it representations}. The elements $(\id \otimes \omega)(y)$ with $\omega \in M_n(\mathbb{C})^\ast$ are called the {\it matrix coefficients} of $u$. We shall use Woronowicz quantum Peter-Weyl theorem \cite{Woronowicz} which states that for every $\alpha, \beta \in \Irr(\bG)$ there exists positive $Q_\alpha  \in M_{n_\alpha}(\mathbb{C})$ with  $\qdim(\alpha) := {\rm Tr}(Q_\alpha) = {\rm Tr}(Q_\alpha^{-1})$ such that
\begin{equation}\label{Eqn=PeterWeyl}
\varphi(  (u^{\beta}_{\mu, \nu})^\ast    u^{\alpha}_{\xi, \eta} ) = \delta_{\alpha, \beta} \qdim(\alpha)^{-1} \langle Q_\alpha^{\frac{1}{2}} \xi, Q_\alpha^{\frac{1}{2}} \mu \rangle \langle \xi, \eta \rangle, \qquad \xi, \eta, \mu, \nu \in \mathbb{C}^{n_\alpha}.
\end{equation}
The quantity $\qdim(\alpha)$ is called the {\it quantum dimension}.

  After these preliminaries the comultiplication $\Delta_\bG$ shall never be used and we stress that all occurrences of the greek letter $\Delta$ (without subscript $\bG$) concern generators of quantum Markov semi-groups.

Let $u^1 \in C(\bG) \otimes M_{n_1}(\mathbb{C})$ and $u^2 \in C(\bG) \otimes M_{n_2}(\mathbb{C})$. The tensor product $u^1 \otimes u^2$ is defined as the representation $u^1_{12} u^2_{13}$.
 $u$ is called irreducible if the matrix algebra generated by $(\omega \otimes \id)(u), \omega \in C(\bG)^\ast$ is simple. A morphism between $u^1 \in C(\bG) \otimes M_{n_1}(\mathbb{C})$ and $u^2 \in C(\bG) \otimes M_{n_2}(\mathbb{C})$ is a map $T: \mathbb{C}^{n_1} \rightarrow \mathbb{C}^{n_2}$ such that $u^1 (1 \otimes T) = (1 \otimes T) u^2$. Let $\Mor(u^1, u^2)$ be the (normed) vector space of morphisms. There is a quantum version of Schur's lemma that states that $u$ is irreducible if and only if $\Mor(u, u) = \mathbb{C} 1$. If $\Mor(u^1, u^2)$ contains a unitary element then $u^1$ and $u^2$ are called equivalent. We write $\Irr(\bG)$ for the equivalence classes of irreducible representations and $\Rep(\bG)$ for the equivalence classes of all finite dimensional representations. Its elements shall typically be denoted by $\alpha, \beta$ and $\gamma$. The dimension of $\alpha \in \Rep(\bG)$ is denoted by $n_\alpha$ and satisfies $n_\alpha \leq \qdim(\alpha)$.  Tensor products and $\Mor$ are well-defined on equivalence classes. For $\alpha, \beta \in \Irr(\bG)$ the tensor product $\alpha \otimes \beta$ is equivalent to a direct sum of irreducibles $\oplus_{\gamma \in \Irr(\bG)} m_\gamma \cdot \gamma$ where $m_\gamma \cdot \gamma = \oplus_{i=1}^{m_\gamma} \gamma$ is an $m_\gamma$-fold copy. This decomposition is unique up to equivalence and the set of all such decompositions is referred to as the fusion rules. We write $\alpha \subseteq \beta$ if $\Mor(\alpha, \beta)$ contains an isometry.
 For $\alpha \in \Rep(\bG)$ we denote by $\overline{\alpha}$ its {\it contragredient representation}.

\begin{prop}[Frobenius duality]\label{Lem=Frobenious}
For $\alpha, \beta, \gamma \in \Rep(\bG)$ we have $\Mor(\alpha, \beta \otimes \gamma) \simeq \Mor(\overline{\beta} \otimes \alpha, \gamma)$ linearly. Consequently, if $\alpha$ and $\gamma$ are irreducible then  $\alpha \subseteq \beta \otimes \gamma$  iff $\gamma \subseteq \overline{\beta} \otimes \alpha$.
\end{prop}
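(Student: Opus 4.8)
The plan is to realize the stated isomorphism concretely via the duality morphisms attached to the contragredient representation, and then to read off the consequence from quantum Schur's lemma. For $\beta \in \Rep(\bG)$ the pair $(\beta, \overline\beta)$ carries a standard solution of the conjugate equations: intertwiners $R_\beta \in \Mor(\ep, \overline\beta \otimes \beta)$ and $\overline R_\beta \in \Mor(\ep, \beta \otimes \overline\beta)$ (with $\ep$ the trivial representation) satisfying
\[
(\id_\beta \otimes R_\beta^\ast)(\overline R_\beta \otimes \id_\beta) = \id_\beta, \qquad (R_\beta^\ast \otimes \id_{\overline\beta})(\id_{\overline\beta} \otimes \overline R_\beta) = \id_{\overline\beta}.
\]
For irreducible $\beta$ these are built from the Peter--Weyl matrix $Q_\beta$ of \eqref{Eqn=PeterWeyl} (Woronowicz), and for general $\beta \in \Rep(\bG)$ one takes the direct sum of such morphisms over the irreducible constituents. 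Everything below uses only that $R_\beta, \overline R_\beta$ are morphisms together with these two identities.

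Next I would define the two candidate maps. For $T \in \Mor(\alpha, \beta \otimes \gamma)$ set
\[
\Phi(T) = (R_\beta^\ast \otimes \id_\gamma)(\id_{\overline\beta} \otimes T) \in \Mor(\overline\beta \otimes \alpha, \gamma),
\]
and for $S \in \Mor(\overline\beta \otimes \alpha, \gamma)$ set
\[
\Psi(S) = (\id_\beta \otimes S)(\overline R_\beta \otimes \id_\alpha) \in \Mor(\alpha, \beta \otimes \gamma).
\]
Both are well defined since tensor products and compositions of morphisms are again morphisms, and both are manifestly linear. The core of the argument is the verification that $\Psi \circ \Phi = \id$ and $\Phi \circ \Psi = \id$: substituting the definitions and using bifunctoriality of $\otimes$ to move $T$ (resp.\ $S$) past the legs on which $R_\beta^\ast, \overline R_\beta$ act, the composite collapses to the left, resp.\ right, conjugate equation tensored with $\id_\gamma$, resp.\ $\id_\alpha$. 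This is a routine but bookkeeping-heavy string-diagram computation; the only point requiring care is the correct placement of tensor legs. This yields the desired linear isomorphism for arbitrary $\alpha, \beta, \gamma \in \Rep(\bG)$.

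Finally I would deduce the ``consequently'' clause. When the source is irreducible, the existence of an isometric intertwiner is equivalent to nonvanishing of the morphism space: if $\alpha$ is irreducible and $0 \ne T \in \Mor(\alpha, \delta)$, then $T^\ast T \in \Mor(\alpha,\alpha) = \C \id$ is a nonzero positive scalar, so a rescaling of $T$ is an isometry; hence $\alpha \subseteq \delta$ iff $\Mor(\alpha,\delta) \ne 0$. Applying this with $\delta = \beta \otimes \gamma$, combining with the isomorphism $\Mor(\alpha, \beta \otimes \gamma) \simeq \Mor(\overline\beta \otimes \alpha, \gamma)$, and using that the adjoint gives a conjugate-linear bijection $\Mor(\overline\beta \otimes \alpha, \gamma) \simeq \Mor(\gamma, \overline\beta \otimes \alpha)$, one obtains the chain of equivalences: $\alpha \subseteq \beta \otimes \gamma$ iff $\Mor(\alpha, \beta \otimes \gamma) \ne 0$ iff $\Mor(\gamma, \overline\beta \otimes \alpha) \ne 0$ iff $\gamma \subseteq \overline\beta \otimes \alpha$, using irreducibility of $\gamma$ in the last step. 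The only genuinely non-formal ingredient---and thus the main thing to pin down---is the existence of the solution $(R_\beta, \overline R_\beta)$ to the conjugate equations, which is exactly where the compact quantum group structure (the $Q_\alpha$ of Peter--Weyl) enters; the remainder is categorical bookkeeping.
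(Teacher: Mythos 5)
Your proof is correct. The paper does not actually prove this proposition --- it is stated in the preliminaries as a standard fact of Woronowicz's representation theory / rigid C$^\ast$-tensor categories --- and your argument is the standard one: the maps $\Phi$ and $\Psi$ are mutually inverse by the two conjugate equations together with bifunctoriality of $\otimes$, and the ``consequently'' clause follows from the observation that for an irreducible source $\alpha$ one has $\alpha \subseteq \delta$ iff $\Mor(\alpha,\delta) \neq 0$ (rescale a nonzero $T$ using $T^\ast T \in \Mor(\alpha,\alpha) = \mathbb{C}\,\id$), plus the conjugate-linear bijection given by taking adjoints. The one ingredient you correctly isolate as non-formal, the existence of $(R_\beta, \overline R_\beta)$ built from the Peter--Weyl matrices $Q_\beta$, is exactly what Woronowicz's theory supplies, so the argument is complete.
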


\begin{lem}\label{Lem=FrobConsequence1}
Let $\alpha, \gamma \in \Irr(\bG)$. There are only finitely many $\beta \in \Irr(\bG)$ such that $1 \subseteq \alpha \otimes \beta \otimes  \gamma$.
\end{lem}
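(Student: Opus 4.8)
The plan is to use Frobenius duality (Proposition~\ref{Lem=Frobenious}) to rewrite the condition $1\subseteq\alpha\otimes\beta\otimes\gamma$ as a containment of the \emph{variable} representation $\beta$ inside a \emph{single fixed} representation built from $\alpha$ and $\gamma$, and then invoke the fusion rules to see that only finitely many irreducibles can occur in a fixed finite dimensional representation.

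Concretely, first I would reformulate $1\subseteq\alpha\otimes\beta\otimes\gamma$ as $\Mor(1,\alpha\otimes\beta\otimes\gamma)\neq 0$. Grouping $\alpha\otimes\beta\otimes\gamma = \alpha\otimes(\beta\otimes\gamma)$ and applying Proposition~\ref{Lem=Frobenious} so as to split off the first leg $\alpha$ gives $\Mor(1,\alpha\otimes(\beta\otimes\gamma))\simeq\Mor(\overline\alpha\otimes 1,\beta\otimes\gamma)=\Mor(\overline\alpha,\beta\otimes\gamma)$. Since $\alpha$ is irreducible so is $\overline\alpha$, and the condition becomes $\overline\alpha\subseteq\beta\otimes\gamma$. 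Now I would apply Frobenius duality a second time, this time to split off $\gamma$: in the symmetric form $\Mor(\delta,\beta\otimes\gamma)\simeq\Mor(\delta\otimes\overline\gamma,\beta)$, which follows from Proposition~\ref{Lem=Frobenious} together with the adjoint isomorphism $\Mor(X,Y)\simeq\Mor(Y,X)$ available in a C$^\ast$-tensor category and $\overline{\overline\gamma}=\gamma$, taking $\delta=\overline\alpha$ yields $\overline\alpha\subseteq\beta\otimes\gamma \iff \beta\subseteq\overline\alpha\otimes\overline\gamma$. Hence every $\beta$ satisfying the hypothesis is a subrepresentation of the single fixed representation $\overline\alpha\otimes\overline\gamma\in\Rep(\bG)$.

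To finish, I would note that $\overline\alpha\otimes\overline\gamma$ is finite dimensional of (ordinary) dimension $n_\alpha n_\gamma$. By the fusion rules it decomposes as a direct sum of irreducibles $\oplus_{\delta} m_\delta\cdot\delta$ with finite multiplicities, and comparing ordinary dimensions forces $\sum_\delta m_\delta\, n_\delta = n_\alpha n_\gamma$. Since each $n_\delta\geq 1$, only finitely many distinct $\delta\in\Irr(\bG)$ occur, and $\beta\subseteq\overline\alpha\otimes\overline\gamma$ means precisely that $\beta$ is equivalent to one of these $\delta$. Therefore there are only finitely many $\beta\in\Irr(\bG)$ with $1\subseteq\alpha\otimes\beta\otimes\gamma$.

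The only delicate point is the bookkeeping in the two applications of Frobenius reciprocity: one must move $\alpha$ off the left and $\gamma$ off the right so as to leave $\beta$ alone inside a fixed object, which requires the symmetric companion of the reciprocity stated in Proposition~\ref{Lem=Frobenious}. This is standard in a rigid C$^\ast$-tensor category, and once $\beta\subseteq\overline\alpha\otimes\overline\gamma$ is established the finiteness is an immediate dimension count, so I do not expect any serious obstacle.
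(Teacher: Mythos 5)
Your proposal is correct and follows essentially the same route as the paper, which likewise uses Frobenius duality to deduce $\beta \subseteq \overline{\alpha}\otimes\overline{\gamma}$ and then concludes by finiteness of the irreducible decomposition of this fixed finite-dimensional representation. You merely spell out the two reciprocity steps and the dimension count that the paper leaves implicit.
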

\begin{proof}
If $1 \subseteq \alpha \otimes \beta \otimes  \gamma$ then by Frobenious duality we have $\beta \subseteq \overline{\alpha} \otimes \overline{\gamma}$ and there are only finitely many such $\beta$.
\end{proof}

We let $\Pol(\bG)$ be the $\ast$-algebra of matrix coefficients of (finite dimensional) representations of $\bG$. It is given by the linear span of $(\id \otimes \omega)(u)$ for all representations $u \in C(\bG) \otimes M_n(\mathbb{C})$ and $\omega \in M_n(\mathbb{C})^\ast$. There is a distinguished faithful $\ast$-homomorphism $\epsilon: \Pol(\bG) \rightarrow \mathbb{C}$ called the {\it counit} that satisfies
\[
(\epsilon \otimes \id) \circ \Delta_{\bG} = \id = (\id \otimes \epsilon) \circ \Delta_{\bG}.
\]
  $\Pol(\bG)$ carries the inner product $\langle x,y \rangle = \varphi(y^\ast x)$ and norm $\Vert x \Vert_2^2 = \langle x,x \rangle$. The completion of $\Pol(\bG)$ with respect to this norm is called $L_2(\bG)$ and may be identified with the GNS-space of $\varphi$.   For $\alpha \in \Irr(\bG)$ we let $P_\alpha: \Pol(\bG) \rightarrow \Pol(\bG)$ be the  orthogonal projection onto the matrix coefficients of $\alpha$.

For compact quantum groups $\mathbb{H}$ and $\mathbb{G}$ we say that $\mathbb{H}$ is a dual quantum subgroup of $\mathbb{G}$, notation $\widehat{\mathbb{H}} < \widehat{\mathbb{G}}$, if $L_\infty(\bH) \subseteq L_\infty(\bG)$ and the von Neumann algebraic comultiplication of $L_\infty(\bG)$ restricts to $L_\infty(\bH)$ as the comultiplication of $\bH$. In this case $\Irr(\bH) \subseteq \Irr(\bG)$ naturally and the fusion rules and morphisms of $\Irr(\bG)$ restrict to $\Irr(\bH)$ (it is a full subcategory).

A {\it central multiplier} $\Phi: L_\infty(\bG) \rightarrow L_\infty(\bG)$ is a map such for every $\alpha \in \Irr(\bG)$ there exist $\Delta_\alpha \in \mathbb{C}$ such that $\Phi(  (\id \otimes \omega)( \alpha)  ) = \Delta_\alpha (\id \otimes \omega)(\alpha)$ for all $\alpha \in \Irr(\bG)$ and $\omega \in M_{n_\alpha}(\mathbb{C})^\ast$. We refer to \cite{JNR} for more general background on multipliers.

\begin{rmk}
We have that $(\Irr(\bG), \Mor)$ with the tensor products, fusion rules and contragredients forms a rigid C$^\ast$-tensor category. A large part of this paper can directly be translated in terms of the abstract setting of rigid C$^\ast$-tensor categories. However, since our many applications are in quantum group theory our presentation follows the quantum group theoretical terminology. Recall that by Tannaka-Krein duality rigid C$^\ast$-tensor categories with specified fibre functor are always of the form  $(\Irr(\bG), \Mor)$ \cite{WoronowiczTannaka}.
\end{rmk}

\subsection{Quantum Markov semi-groups}
Let $M$ be a von Neumann algebra with a faithful normal state $\varphi$. A   {\it quantum Markov semi-group (QMS)} $\Phi = (\Phi_t)_{t \geq 0}$ is semi-group of normal unital completely positive maps $\Phi_t: M \rightarrow M$ such that for every $x \in M$ the map $t \mapsto \Phi_t(x)$ is strongly continuous. Moreover we assume that a QMS is GNS-symmetric in the sense that $\varphi(\Phi_t(x) y) = \varphi(x \Phi_t(y))$ for all $x,y \in M$. $\Phi$ is called $\varphi$-modular (or modular) if $\Phi_t \circ \sigma^\varphi_s = \sigma^\varphi_s \circ \Phi_t$ for all $t\geq 0, s \in \mathbb{R}$ where $\sigma^\varphi$ is the modular automorphism group of $\varphi$ \cite{TakII}. The QMS's occuring in this paper are QMS's of central multipliers which are always modular and GNS-symmetric. Further they are norm continuous on $\Pol(\bG)$.  It should also be stressed that the most important of our applications are for finite von Neumann algebras and $\varphi$ tracial. However in the analysis we shall also need the Haar state on $\bG_q = SU_q(2), q \in (-1,1)$ which is non-tracial even though $L_\infty(\bG_q)$ is of type I.

If $\Phi$ is a QMS of central multipliers then for every $\alpha \in \Irr(\bG)$ there exists $\Delta_\alpha \geq 0$ such that  $\Phi_t(x_\alpha) = \exp(-t \Delta_\alpha) x_\alpha$ for every matrix coefficient $x_\alpha$ of $\alpha$. The values $(\Delta_\alpha)_{\alpha \in \Irr(\bG)}$ completely determine $\Phi$. We set the generator $\Delta: \subseteq L_2(\bG) \rightarrow L_2(\bG)$ to be the closure of
\[
 \Pol(\bG) \rightarrow \Pol(\bG): x_\alpha \mapsto \Delta_\alpha x_\alpha.
\]
$\Phi$ is called {\it immediately $L_2$-compact} if $\Delta$ has compact resolvent.  $\Delta$ is closely related to the associated quantum Dirichlet form. In \cite{CaspersSkalskiCMP}, \cite{JolissaintMartin} it was proved that a (general) von Neumann algebra has Haagerup property if and only if it admits an immediately $L_2$-compact QMS.

\subsection{Free products}
To two compact quantum groups $\bG_1$ and $\bG_2$ one can associate a free product quantum group $\bG_1 \ast \bG_2$ \cite{WangFree}. It satisfies $L_\infty(\bG) = L_\infty(\bG_1) \ast L_\infty(\bG_2)$ where free products are taken with respect to the von Neumann algebraic Haar states. Its Haar state is the free product of the Haar states. Moreover it can be equipped with a natural comultiplication which shall not be used in this paper. What is relevant for us is the following proposition that describes $\Irr(\bG)$ as a fusion category.

\begin{prop}[See \cite{WangFree}, \cite{Valvekens} or Theorem 3.4 of \cite{CaspersFima}]\label{Thm=FreeProductRep}
Let $\bG_1$ and $\bG_2$ be compact quantum groups. A tensor product $\gamma_1 \otimes \ldots \otimes \gamma_n$ with $\gamma_i \in \Irr(\bG_{k_i})$ and $k_i \not = k_{i+1}$ is called reduced. All such reduced tensor products form a well-defined complete set of mutually inequivalent irreducible representations of $\bG_1 \ast \bG_2$. In other words they constitute $\Irr(\bG_1 \ast \bG_2)$.   The fusion rules are as follows for reduced tensors $\beta_1 \otimes \ldots \otimes \beta_l$ and $\gamma_1 \otimes \ldots \otimes \gamma_n$. If $\beta_l$ and $\gamma_1$ are not representations of the same quantum group, then
  \[
  \beta_1 \otimes \ldots \otimes \beta_l \cdot \gamma_1 \otimes \ldots \otimes \gamma_n =   \beta_1 \otimes \ldots \otimes \beta_l \otimes \gamma_1 \otimes \ldots \otimes \gamma_n.
 \]
   If $\beta_l$ and $\gamma_1$ are representations of the same quantum group then,
 \begin{equation}\label{Eqn=FreeProductDec}
 \begin{split}
  &(\beta_1 \otimes \ldots \otimes \beta_l) \otimes (\gamma_1 \otimes \ldots \otimes \gamma_n) \\
   = &
   \left( (\beta_1 \otimes \ldots \otimes \beta_{l-1}) \otimes  (\bigoplus_{i, \alpha_i \not = 1} \alpha_i ) \otimes  (\gamma_2 \otimes \ldots \otimes \gamma_n)
   \right)  \oplus  \left( \bigoplus_{i, \alpha_i = 1} (\beta_1 \otimes \ldots \otimes \beta_{l-1}) \otimes    (\gamma_2 \otimes \ldots \otimes \gamma_n) \right),
 \end{split}
 \end{equation}
 where $\beta_l \otimes \gamma_1 = \oplus_{i} \alpha_i$ is the decomposition of $\beta_l \otimes \gamma_1$ into irreducibles (with possible multiplicity).  Note that in \eqref{Eqn=FreeProductDec} the latter summand is not necessarily reduced but the fusion rules are hereby defined inductively.
\end{prop}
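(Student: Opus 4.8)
The plan is to establish, in one computation, that the reduced tensor products are irreducible and mutually inequivalent, by determining all morphism spaces $\Mor(\beta,\gamma)$ between reduced words, and then to read off completeness and the fusion rules from the free product structure of $\Pol(\bG_1\ast\bG_2)$. Throughout I use that $\Pol(\bG_1\ast\bG_2)$ is the algebraic free product $\Pol(\bG_1)\ast\Pol(\bG_2)$, that the Haar state $\varphi$ restricts to $\varphi_i$ on $\Pol(\bG_i)$, and that $\Pol(\bG_1)$ and $\Pol(\bG_2)$ are free with respect to $\varphi$. I also use the elementary fact that for any finite-dimensional representation $w$ the operator $P_w=(\varphi\otimes\id)(w)$ is idempotent with range equal to the invariant vectors (indeed $P_w^2=((\varphi\otimes\varphi)\circ\Delta_{\bG}\otimes\id)(w)=P_w$ by invariance), so that the multiplicity of the trivial representation in $w$ equals $\mathrm{rank}(P_w)$.

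First I record the base computation. Let $w=\delta_1\otimes\ldots\otimes\delta_m$ be a reduced word in which every $\delta_j$ is a \emph{nontrivial} irreducible. Each matrix coefficient of $\delta_j$ is centred: applying \eqref{Eqn=PeterWeyl} with the trivial representation gives $\varphi((\delta_j)_{ab})=0$ whenever $\delta_j\not\simeq 1$. A matrix coefficient of $w$ is a product of such coefficients drawn from alternating factors, so freeness of $\varphi=\varphi_1\ast\varphi_2$ forces its Haar value to vanish. Hence $P_w=0$, and the trivial representation occurs in no nonempty reduced word of nontrivial irreducibles.

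Next I compute $\Mor(\beta,\gamma)$ for reduced words $\beta=\beta_1\otimes\ldots\otimes\beta_l$ and $\gamma=\gamma_1\otimes\ldots\otimes\gamma_n$. By Frobenius duality (Proposition~\ref{Lem=Frobenious}), $\dim\Mor(\beta,\gamma)$ equals the multiplicity of the trivial representation in $w:=\overline{\beta_l}\otimes\ldots\otimes\overline{\beta_1}\otimes\gamma_1\otimes\ldots\otimes\gamma_n$. If $\beta_1,\gamma_1$ lie in different factors then $w$ is itself a reduced word of nontrivial irreducibles, so the base computation gives $0$ (unless both words are empty). If $\beta_1,\gamma_1$ lie in the same factor, I decompose $\overline{\beta_1}\otimes\gamma_1=\bigoplus_i\alpha_i$ inside that factor; here the trivial representation occurs exactly when $\beta_1\simeq\gamma_1$, with multiplicity one. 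Substituting splits $w$ into a direct sum: each nontrivial $\alpha_i$ yields a reduced word (its neighbours $\overline{\beta_2}$ and $\gamma_2$ lie in the other factor), contributing $0$, while $\alpha_i=1$ yields the contracted word $\overline{\beta_l}\otimes\ldots\otimes\overline{\beta_2}\otimes\gamma_2\otimes\ldots\otimes\gamma_n$. Iterating, $\dim\Mor(\beta,\gamma)=1$ precisely when $l=n$ and $\beta_i\simeq\gamma_i$ for all $i$, and $0$ otherwise. Taking $\beta=\gamma$ shows each reduced word is irreducible, and the general case shows distinct reduced words are inequivalent.

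For completeness, alternating products of matrix coefficients of the two factors span $\Pol(\bG_1\ast\bG_2)$, and each such product is a coefficient of the corresponding tensor product; after discarding trivial letters and merging adjacent same-factor letters (which re-expand into coefficients of irreducibles within that factor), one is reduced to coefficients of reduced words. Thus these coefficients span $\Pol(\bG_1\ast\bG_2)$, and since the reduced words are irreducible and mutually inequivalent, the Peter--Weyl decomposition forces them to exhaust $\Irr(\bG_1\ast\bG_2)$. The fusion rules follow by the same mechanism: when $\beta_l,\gamma_1$ lie in the same factor, distributing the tensor product over $\beta_l\otimes\gamma_1=\bigoplus_i\alpha_i$ presents the nontrivial summands as reduced (hence irreducible) words and the trivial summands as copies of the shorter, possibly non-reduced word $\beta_1\otimes\ldots\otimes\beta_{l-1}\otimes\gamma_2\otimes\ldots\otimes\gamma_n$, to which the rule is reapplied; this is exactly \eqref{Eqn=FreeProductDec}. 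I expect the main obstacle to be the base computation together with the identity $\text{mult of }1=\mathrm{rank}(P_w)$: everything hinges on converting freeness of the Haar state into the vanishing of invariant vectors for reduced words, and on making this idempotent description precise in the non-Kac setting (e.g.\ for $SU_q(2)$) needed later.
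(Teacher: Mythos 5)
Your argument is correct and is essentially the standard proof from the references the paper cites for this proposition (\cite{WangFree}, \cite{CaspersFima}); the paper itself gives no proof, only the citation. Your route — freeness of the Haar state forcing $(\varphi\otimes\id)(w)=0$ on reduced words of nontrivial irreducibles, then Frobenius duality (Proposition \ref{Lem=Frobenious}) to compute all $\Mor$-spaces, then a spanning argument for completeness — is exactly the mechanism used there, and the steps you flag as delicate (the idempotent $P_w$ having the invariant vectors as its range in the non-Kac case, and the inductive contraction at same-factor junctions) are handled correctly.
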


We shall use the short hand notation
\[
\gamma_1 \ldots \gamma_n = \gamma_1 \otimes \ldots \otimes \gamma_n,
\]
for a reduced word.

\subsection{Multiplicity freeness}
A compact quantum group $\bG$ is called {\bf multiplicity free} if for $\alpha, \beta, \gamma \in \Irr(\bG)$ the space $\Mor(\gamma, \alpha \otimes \beta)$ is $\leq 1$-dimensional. That is $\gamma$ occurs at most once in the decomposition of $\alpha \otimes \beta$ into irreducible representations.  In case $\bG_1$ and $\bG_2$ are multiplicity free then in \eqref{Eqn=FreeProductDec} the last summation is in fact a single summand if $\beta_{k} = \overline{\gamma_1}$ and it vanishes otherwise (it follows by Frobenius duality Lemma \ref{Lem=Frobenious} for instance). So we record that (the summation over $\alpha$ going over irreducible representations),
\begin{equation} \label{Eqn=FreeProductDecMultFree}
\begin{split}
  &(\beta_1  \ldots  \beta_k) \otimes (\gamma_1  \ldots  \gamma_n) \\
   = &
   \bigoplus_{i=1}^{L} \bigoplus_{1 \not = \alpha \subseteq \beta_{k-i+1} \otimes \gamma_i}
   (\beta_1   \ldots   \beta_{k-i})   \alpha    (\gamma_{i+1}   \ldots  \gamma_n),
 \end{split}
\end{equation}
where $L-1$ is the maximum index $i$ for which $\gamma_i = \overline{\beta_{k-i+1}}$. We note that the summands in \eqref{Eqn=FreeProductDecMultFree} are reduced. This decomposition shall be used without further reference in the rest of the paper.

\vspace{0.3cm}

\noindent {\bf Assumption:} Throughout the entire paper we assume that all compact quantum groups (e.g. $\bH, \bG, \bG_1$ and $\bG_2$) are multiplicity free.

\vspace{0.3cm}

The following result should be well-known and is easy to prove.

\begin{prop}
If $\bG_1$ and $\bG_2$ are compact quantum groups that are multiplicity free then so is $\bG_1 \ast \bG_2$. If $\widehat{\bH} < \widehat{\bG}$ and $\bG$ is multiplicity free then so is $\bH$.
\end{prop}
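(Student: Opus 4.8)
The plan is to treat the two assertions separately; the free product is the substantial one and the dual subgroup case is essentially immediate.

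\emph{Free product.} I would argue directly from the multiplicity-free fusion formula \eqref{Eqn=FreeProductDecMultFree}, so that no separate induction is needed. Given two reduced words $w = \beta_1 \ldots \beta_k$ and $v = \gamma_1 \ldots \gamma_n$, that formula already exhibits $w \otimes v$ as a direct sum of reduced words $(\beta_1 \ldots \beta_{k-i})\, \alpha\, (\gamma_{i+1} \ldots \gamma_n)$, and by Proposition \ref{Thm=FreeProductRep} every reduced word is an irreducible representation of $\bG_1 \ast \bG_2$. Thus \eqref{Eqn=FreeProductDecMultFree} is already a decomposition into irreducibles, and the only thing to verify is that the summands are pairwise inequivalent, i.e.\ that no reduced word is repeated. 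I would do this by a length argument: the summand attached to index $i$ is a reduced word of length $(k-i) + 1 + (n-i) = k + n + 1 - 2i$, which is strictly decreasing in $i$. Hence summands coming from different $i$ have different word lengths and, since inequivalent reduced words are distinguished precisely by being different words (Proposition \ref{Thm=FreeProductRep}), they are inequivalent. Within a fixed $i$ all summands share the prefix $\beta_1 \ldots \beta_{k-i}$ and the suffix $\gamma_{i+1} \ldots \gamma_n$ and differ only in the middle letter $\alpha$, which ranges over the irreducible constituents of $\beta_{k-i+1} \otimes \gamma_i$; since the factor $\bG_j$ containing both $\beta_{k-i+1}$ and $\gamma_i$ is multiplicity free, these constituents occur with multiplicity one and are pairwise distinct, so the associated reduced words are distinct. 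Combining the two observations, every summand of \eqref{Eqn=FreeProductDecMultFree} appears exactly once, which is precisely multiplicity freeness of $\bG_1 \ast \bG_2$.

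I would take care over a few routine points that do not affect the length bookkeeping: that $\alpha$ lies in the same factor as $\beta_{k-i+1}$ and $\gamma_i$, so the displayed word is genuinely reduced (this is the content of the remark following \eqref{Eqn=FreeProductDecMultFree}), and the degenerate cases $k-i = 0$ or $n-i = 0$ in which the prefix or suffix is empty. It is worth stressing that the formula \eqref{Eqn=FreeProductDecMultFree} uses only multiplicity freeness of the \emph{factors} $\bG_1, \bG_2$, so there is no circularity in using it to deduce multiplicity freeness of the free product.

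\emph{Dual subgroup.} This is immediate from the fact recorded in the excerpt that, when $\widehat{\bH} < \widehat{\bG}$, the inclusion $\Irr(\bH) \subseteq \Irr(\bG)$ realizes $\Irr(\bH)$ as a full subcategory with the morphism spaces and fusion rules restricted from $\bG$. Hence for any $\alpha, \beta, \gamma \in \Irr(\bH)$ one has $\Mor_{\bH}(\gamma, \alpha \otimes \beta) = \Mor_{\bG}(\gamma, \alpha \otimes \beta)$, and the right-hand side is at most one-dimensional because $\bG$ is multiplicity free; therefore so is $\bH$.

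The only genuine obstacle is the distinctness check in the free product case. Once one trusts \eqref{Eqn=FreeProductDecMultFree} together with the parametrization of $\Irr(\bG_1 \ast \bG_2)$ by reduced words, the argument reduces to pure bookkeeping with word lengths and the multiplicity freeness of the individual factors.
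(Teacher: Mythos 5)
Your proposal is correct and follows essentially the same route as the paper: both arguments read multiplicity freeness of $\bG_1 \ast \bG_2$ off the decomposition \eqref{Eqn=FreeProductDecMultFree}, using the word length to pin down the index $i$ of the summand (the paper phrases this as $2i = k+n-l+1$) and multiplicity freeness of the individual factors to separate summands with the same $i$. The dual-subgroup case is handled in both by the full-subcategory observation, so there is nothing to add.
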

\begin{proof}
Suppose that we have an irreducible representation $\alpha = \alpha_1 \ldots \alpha_l$ contained in $(\beta_1  \ldots  \beta_k) \otimes (\gamma_1  \ldots  \gamma_n)$. Then by considering the length $\alpha$ must be one of the $i$-th summands in \eqref{Eqn=FreeProductDecMultFree} with $i$ satisfying  $2i = k +n -l+1$. But all those summands are mutually inequivalent by Proposition \ref{Thm=FreeProductRep} and the fact that $\bG_1$ and $\bG_2$ are multiplicity free.
\end{proof}

That $\bG$ is multiplicity free has the following consequence. For $\beta, \gamma \in \Irr(\bG)$ and $\alpha \subseteq \beta \otimes \gamma$ there exists an intertwiner
\[
V^{\beta, \gamma}_{\alpha} \in \Mor( \alpha, \beta \otimes \gamma),
\]
 that is moreover unique up to a phase factor. All expressions and proofs occuring in this paper are independent of this phase factor unless mentioned otherwise.

\subsection{Monoidal equivalence}

\begin{dfn}
Two compact quantum groups $\bG_1$ and $\bG_2$ are called {\it monoidally equivalent} if there exists a bijection $\pi: (\Irr(\bG_1), \Mor_{\bG_1} ) \rightarrow  (\Irr(\bG_2), \Mor_{\bG_2} )$ that maps the trivial representation of $\bG_1$ to the trivial representation of $\bG_2$ and which for any morphisms $S,T$ and unit $1_\alpha \in \Mor(\alpha, \alpha), \alpha \in \Rep(\bG)$ satisfies:
\[
\begin{split}
 &\pi(1_\alpha) = 1_\alpha, \qquad \pi( S \otimes T ) = \pi(S) \otimes \pi(T), \\
 & \pi(S^\ast) = \pi(S)^\ast, \qquad \pi(ST) = \pi(S) \pi(T),
\end{split}
\]
where in the latter equality we assume that $S$ and $T$ are composable. $\pi$ is then called a monoidal equivalence.
\end{dfn}

\begin{prop}\label{Prop=MonoidalTransference}
Let $\bG_1$ and $\bG_2$ be monoidally equivalent compact quantum groups so that we may identify $\Irr(\bG_1) = \Irr(\bG_2)$. Let $(\Phi_t^1)_{t \geq 0}$ be a QMS of central multipliers on $L_\infty(\bG_1)$ such that $\Phi_t^1(x_\alpha) = \exp(-t\Delta_\alpha) x_\alpha$ for every matrix coefficient $x_\alpha$ of $\alpha \in \Irr(\bG_1)$. Then there exists  a QMS of central multipliers $(\Phi_t^2)_{t \geq 0}$  on $L_\infty(\bG_2)$ such that $\Phi_t^2(x_\alpha) = \exp(-t\Delta_\alpha) x_\alpha$ for every matrix coefficient $x_\alpha$ of $\alpha \in \Irr(\bG_2)$.
\end{prop}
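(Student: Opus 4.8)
The plan is to define $\Phi^2_t$ by the \emph{same symbol} and then to verify the defining properties of a QMS one by one, isolating complete positivity as the single non-formal point. Using the identification $\Irr(\bG_1) = \Irr(\bG_2)$ furnished by the monoidal equivalence $\pi$, set
\[
\Phi^2_t(x_\alpha) = \exp(-t\Delta_\alpha)\, x_\alpha
\]
for every matrix coefficient $x_\alpha$ of $\alpha \in \Irr(\bG_2)$, extended linearly to $\Pol(\bG_2)$. The semigroup law and the fact that $t \mapsto \Phi^2_t(x)$ is norm continuous on $\Pol(\bG_2)$ follow at once from the exponential form of the symbol, and unitality is the special case $\alpha = 1$ together with $\Delta_1 = 0$ (valid on $\bG_1$ since $\Phi^1$ is unital, and transferred because $\pi$ fixes the trivial representation). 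For GNS-symmetry and $\varphi$-modularity I would argue as follows. The modular group $\sigma^\varphi$ preserves each $\alpha$-isotypic subspace of $\Pol(\bG_2)$, so any central multiplier commutes with $\sigma^\varphi_s$; this gives modularity. For GNS-symmetry, the Peter--Weyl orthogonality relations show that $\varphi(x_\alpha y_\beta) \neq 0$ forces $\beta = \overline{\alpha}$, so GNS-symmetry of a central multiplier is equivalent to $\Delta_\alpha = \Delta_{\overline{\alpha}}$ for all $\alpha$. This identity holds on $\bG_1$ because $\Phi^1$ is a QMS, and it transfers to $\bG_2$ because a monoidal equivalence preserves the rigid structure and hence contragredients, $\pi(\overline{\alpha}) = \overline{\pi(\alpha)}$ (both solve the conjugate equations, which are formulated purely in terms of $\otimes$, composition, $\ast$ and units).

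The heart of the matter is that each $\Phi^2_t$ is completely positive. Here the key principle is that, for a central multiplier, complete positivity is an \emph{intrinsic property of the rigid C$^\ast$-tensor category} $(\Irr(\bG), \Mor)$ together with the symbol $(a_\alpha)_\alpha$, and does not depend on the concrete realisation of the category as a compact quantum group. Concretely, a central multiplier with symbol $(a_\alpha)_\alpha$ is unital completely positive precisely when $(a_\alpha)_\alpha$ is a \emph{positive-definite function} on the category, a condition expressed by the positivity of a family of finite matrices built from the scalars $a_\gamma$ and the morphism spaces $\Mor(\gamma, \alpha \otimes \beta)$ via the categorical duality (this is the quantum-group analogue of positive-definiteness of a multiplier on a discrete group, and is the mechanism behind the monoidal invariance of approximation properties; see Popa--Vaes, Neshveyev--Yamashita and De Commer--Freslon--Yamashita). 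Since $\pi$ is an isomorphism of rigid C$^\ast$-tensor categories---it preserves $\otimes$, $\Mor$, composition, adjoints, units and therefore fusion rules and duals---it carries the positivity certifying that $\Phi^1_t$ is completely positive over to the corresponding positivity for $\Phi^2_t$. Hence each $\Phi^2_t$ is ucp on $\Pol(\bG_2)$.

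It remains to upgrade $\Phi^2_t$ from a ucp, $\varphi$-preserving map on the dense $\ast$-subalgebra $\Pol(\bG_2)$ to a normal ucp map on $L_\infty(\bG_2)$. This is routine: a GNS-symmetric, state-preserving, unital completely positive map on $\Pol(\bG_2)$ is an $L_2$-contraction for $\Vert \cdot \Vert_2$, so it extends to a bounded operator on $L_2(\bG_2)$ and, by Kaplansky density, to a normal ucp map on $L_\infty(\bG_2)$ with the prescribed symbol; strong continuity of $t \mapsto \Phi^2_t(x)$ for all $x$ then follows from the uniform bound $\Vert \Phi^2_t \Vert \leq 1$ together with norm continuity on the dense subalgebra. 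Assembling the pieces, $(\Phi^2_t)_{t \geq 0}$ is a QMS of central multipliers on $L_\infty(\bG_2)$ with $\Phi^2_t(x_\alpha) = \exp(-t\Delta_\alpha)\, x_\alpha$, as required.

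The only genuinely non-formal ingredient is the categorical characterisation of complete positivity of central multipliers and its invariance under $\pi$; everything else is bookkeeping with the symbol, and I would expect this to be the main obstacle. If one prefers to avoid the general theory of positive-definite functions on C$^\ast$-tensor categories, an alternative is to transport complete positivity concretely through the monoidal-equivalence bimodule (linking algebra) of Bichon--De Rijdt--Vaes, which implements the identification $\Irr(\bG_1) = \Irr(\bG_2)$ at the operator-algebraic level and intertwines the two central multipliers.
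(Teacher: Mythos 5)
Your proof is correct, but it takes a different route from the paper. The paper's own proof is a one-line reduction: it invokes the transference method of Freslon (Proposition 6.3 of \cite{FreslonJFA}), which implements the identification $\Irr(\bG_1)=\Irr(\bG_2)$ concretely through the linking algebra of the monoidal equivalence from \cite{BichonRijdtVaes} and transports the ucp maps $\Phi_t^1$ across it --- this is exactly the alternative you sketch in your last sentence. Your main argument instead runs through the intrinsic characterisation of ucp central multipliers as cp-multipliers (positive-definite functions) on the rigid C$^\ast$-tensor category $\Rep(\bG)$, \`a la Popa--Vaes and Neshveyev--Yamashita, and observes that this notion is manifestly invariant under equivalence of categories; the author himself gestures at this viewpoint in the remark immediately following the proposition (``a central QMS lives on the level of the rigid C$^\ast$-tensor category''). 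The trade-off: your route makes the conceptual reason for the invariance transparent and isolates complete positivity as the only non-formal point, but it leans on the full machinery of cp-multipliers on tensor categories (itself a nontrivial correspondence theorem); the paper's route is more hands-on but hides the mechanism inside the linking-algebra construction. Your verifications of the peripheral properties (semigroup law, modularity via preservation of isotypic components, GNS-symmetry via $\Delta_\alpha=\Delta_{\overline{\alpha}}$ and $\pi(\overline{\alpha})=\overline{\pi(\alpha)}$, and the normal extension to $L_\infty(\bG_2)$) are all sound.
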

\begin{proof}
 The proof of this fact is the same as \cite[Proposition 6.3]{FreslonJFA} and is based on \cite[Theorems 3.9 and 6.1]{BichonRijdtVaes} together with a transference method.
\end{proof}

In a sense one could also say that a central QMS lives on the level of the rigid C$^\ast$-tensor category \cite{PopaVaesTensor}, \cite{NeshveyevYamashitaTesnor}.

\section{A rigid C$^\ast$-tensor category approach to gradient estimates}\label{Sect=Almost}

To a QMS on a tracial von Neumann algebra one can associate a canonical bimodule (in principle only defined over a dense subalgebra of $M$) which is called the gradient bimodule $H_\nabla$.
In \cite{CaspersGradient}, \cite{CIW} sufficient conditions were given to assure that $H_\nabla$ is in fact a von Neumann bimodule that is moreover quasi-contained in the coarse bimodule. In this section we provide a categorical viewpoint on the approach in \cite{CaspersGradient}. What we show is that the methods and estimates that occur in the proofs of \cite{CaspersGradient} actually live on the level of a monoidal category. In particular all computations in \cite{CaspersGradient} can be carried out on the level of $SU_q(2)$ after which they transfer to a much larger class of quantum groups. A particular feature of our current approach is that the properties we consider are stable under {\it repeated} applications of constructions like free products, wreath products,  taking dual quantum subgroups and monoidal equivalence. This should be compared to for instance \cite[Theorem C]{IsonoIMRN} where such results (and consequences for rigidity properties) were limited to free products of quantum groups in a specific class. We thus cover a richer class of quantum groups than what occurs in the literature so far. In particular this approach allows us to use the main result of \cite{LemeuxTarrago} and we cover in particular free wreath products and $H_N^+$. We prove for instance that $H_N^+$ is strongly solid.
% Together with the results from \cite{Brannan}, \cite{IsonoIMRN} (which are covered by our current methods as well) it shows that all  1-colored non-crossing partition/easy quantum groups, classified in \cite{BanicaSpeicher}, \cite{WeberAdvances}, are strongly solid.
We will come back to these results in the subsequent sections. In the current section we introduce the main technical definition of being `approximately linear with almost commuting intertwiners'  and prove that is stable under free products, monoidal equivalence and taking quantum subgroups.

\subsection{Approximately linear with almost commuting intertwiners}
Let $\bG$ be a compact quantum group and recall that it is assumed multiplicity free.  For $\alpha, \beta, \gamma \in \Irr(\bG), \beta_2 \subseteq \alpha \otimes \beta \otimes \gamma$ we define
\[
\begin{split}
L_\beta^{\alpha, \gamma} = & \{ (\beta_1, \beta_2) \in \Irr(\bG) \times  \Irr(\bG)  \mid   \beta_1 \subseteq \alpha \otimes \beta, \beta_2 \subseteq \beta_1 \otimes \gamma  \},  \\
R_\beta^{\alpha, \gamma} = & \{ (\beta_1, \beta_2)  \in \Irr(\bG) \times  \Irr(\bG)  \mid  \beta_1 \subseteq \beta \otimes \gamma, \beta_2 \subseteq \alpha \otimes \beta_1   \}, \\
L_{\beta, \beta_2}^{\alpha, \gamma} = & \{ \beta_1  \in \Irr(\bG)  \mid (\beta_1, \beta_2) \in  L_\beta^{\alpha, \gamma}   \},  \\
R_{\beta, \beta_2}^{\alpha, \gamma} = & \{ \beta_1  \in \Irr(\bG)  \mid (\beta_1, \beta_2) \in  R_\beta^{\alpha, \gamma}   \}.
\end{split}
\]

\begin{lem}\label{Lem=RepBound}
  Given $\alpha, \gamma \in \Irr(\bG)$ the number of elements in the sets $L_\beta^{\alpha, \gamma}, R_\beta^{\alpha, \gamma}, L_{\beta, \beta_2}^{\alpha, \gamma}, R_{\beta, \beta_2}^{\alpha, \gamma}$ is bounded uniformly in $\beta, \beta_2$.
\end{lem}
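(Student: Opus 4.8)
The plan is to reduce the whole statement to a single uniform estimate on fusion multiplicities and then apply it four times. The core claim is: \emph{for a fixed $\alpha \in \Irr(\bG)$ and any $\beta \in \Irr(\bG)$, the number of irreducibles $\beta_1$ with $\beta_1 \subseteq \alpha \otimes \beta$ is at most $\qdim(\alpha)^2$, independently of $\beta$.} First I would record the two properties of the quantum dimension that I need: it is additive over direct sums and multiplicative over tensor products (both immediate from $Q_{\alpha \oplus \beta} = Q_\alpha \oplus Q_\beta$ and $Q_{\alpha \otimes \beta} = Q_\alpha \otimes Q_\beta$ together with \eqref{Eqn=PeterWeyl}), and that $\qdim(\overline{\alpha}) = \qdim(\alpha)$, which follows from $\qdim(\alpha) = \Tr(Q_\alpha) = \Tr(Q_\alpha^{-1})$.

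To prove the core claim, use that $\bG$ is multiplicity free, so $\alpha \otimes \beta = \bigoplus_{\beta_1 \subseteq \alpha \otimes \beta} \beta_1$ and hence $\qdim(\alpha)\qdim(\beta) = \sum_{\beta_1 \subseteq \alpha \otimes \beta} \qdim(\beta_1)$. For each such $\beta_1$, Frobenius duality (Proposition \ref{Lem=Frobenious}) gives $\beta \subseteq \overline{\alpha} \otimes \beta_1$, whence $\qdim(\beta) \le \qdim(\overline{\alpha})\qdim(\beta_1) = \qdim(\alpha)\qdim(\beta_1)$, i.e. $\qdim(\beta_1) \ge \qdim(\beta)/\qdim(\alpha)$. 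Substituting this lower bound into the sum yields $\qdim(\alpha)\qdim(\beta) \ge \#\{\beta_1\} \cdot \qdim(\beta)/\qdim(\alpha)$, and since $\qdim(\beta) > 0$ this rearranges to $\#\{\beta_1\} \le \qdim(\alpha)^2$. The symmetric version --- for fixed $\gamma$ the number of $\beta_1$ with $\beta_1 \subseteq \beta \otimes \gamma$ is at most $\qdim(\gamma)^2$ --- is proved identically, now using $\beta \subseteq \beta_1 \otimes \overline{\gamma}$.

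Granting this, the four bounds follow by bookkeeping. Since every $\beta_1 \in L_{\beta, \beta_2}^{\alpha, \gamma}$ satisfies $\beta_1 \subseteq \alpha \otimes \beta$, the core claim gives $|L_{\beta, \beta_2}^{\alpha, \gamma}| \le \qdim(\alpha)^2$; dually $|R_{\beta, \beta_2}^{\alpha, \gamma}| \le \qdim(\gamma)^2$, since there $\beta_1 \subseteq \beta \otimes \gamma$. For the sets of pairs I would bound the number of admissible first coordinates and, for each, the number of admissible second coordinates: in $L_\beta^{\alpha, \gamma}$ there are at most $\qdim(\alpha)^2$ choices of $\beta_1 \subseteq \alpha \otimes \beta$ and, for each fixed $\beta_1$, at most $\qdim(\gamma)^2$ choices of $\beta_2 \subseteq \beta_1 \otimes \gamma$, so $|L_\beta^{\alpha, \gamma}| \le \qdim(\alpha)^2 \qdim(\gamma)^2$; symmetrically $|R_\beta^{\alpha, \gamma}| \le \qdim(\gamma)^2 \qdim(\alpha)^2$. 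All four bounds depend only on $\alpha$ and $\gamma$, as required.

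The only real content is the core estimate, and the point to get right is that the naive count (total quantum dimension $\qdim(\alpha)\qdim(\beta)$ divided by the trivial lower bound $\qdim(\beta_1) \ge 1$) yields only a $\beta$-dependent bound. The essential step is the Frobenius lower bound $\qdim(\beta_1) \ge \qdim(\beta)/\qdim(\alpha)$, which exactly cancels the growth in $\qdim(\beta)$. I expect this cancellation to be the crux; the remaining implications are routine.
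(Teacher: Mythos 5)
Your proof is correct and follows essentially the same route as the paper: Frobenius duality gives the lower bound $\qdim(\beta_1) \geq \qdim(\beta)/\qdim(\alpha)$ on each irreducible constituent, and dividing the total dimension by this bound cancels the $\qdim(\beta)$ growth and yields a bound depending only on $\alpha$ (and then $\gamma$). The only cosmetic difference is that the paper runs the same argument with classical dimensions instead of quantum dimensions; both work, and your explicit bookkeeping for the four sets is if anything slightly more careful than the paper's.
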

\begin{proof}
Suppose that $\beta_1 \subseteq \alpha \otimes \beta, \beta_1 \in \Irr(\bG)$  then by Frobenius duality Lemma \ref{Lem=Frobenious} we have that $\beta \subseteq \overline{\alpha} \otimes \beta_1$. But this can only happen if $\dim(\beta) \leq \dim(\overline{\alpha})   \dim(\beta_1)$. So that $\dim(\beta_1) \geq \dim(\beta) \dim(\overline{\alpha})^{-1}$. By counting dimensions   we see that $\alpha \otimes \beta$ can therefore have at most $\dim(\overline{\alpha})$ irreducible inequivalent subrepresentations. Applying the same argument in turn to $\beta_1 \otimes \beta \otimes \gamma$ we see that there are at most   $\dim(\overline{\gamma})$ irreducible representations contained in this representation.
\end{proof}

Let $\Phi := (\Phi_t)_{t \geq 0}$ be a QMS of central multipliers on   $\bG$. The following definition is our main technical tool. Recall that we need $\bG$ to be multiplicity free to define up to a phase factor uniquely determined intertwiners $V^{\alpha, \beta}_\gamma, \alpha, \beta, \gamma \in \Irr(\bG)$. So from this point the multiplicity freeness is being used.

\begin{dfn}\label{Dfn=Almost}
We say that $\Phi$ is {\it approximately linear with almost commuting intertwiners} if the following holds. For every $\alpha,  \gamma \in \Irr(\bG)$ there exists a finite set $A_{00}  := A_{00}(\alpha, \gamma) \subseteq \Irr(\bG)$ such that for every $\beta \in \Irr(\bG) \backslash A_{00}$ and $\beta_2 \subseteq \alpha \otimes \beta \otimes \gamma$  there exist bijections (called the {\it $v$-maps}),
\[
 v^{\alpha, \gamma}( \: \cdot \: ; \beta, \beta_2) := v(\: \cdot \: ; \beta, \beta_2): L_{\beta, \beta_2}^{\alpha, \gamma} \rightarrow
 R_{\beta, \beta_2}^{\alpha, \gamma},
\]
such that   the following holds. There exists a set $A \subseteq \Irr(\bG) \backslash A_{00}$ and a  constant $C := C(\alpha, \gamma) > 0$  such that
\begin{enumerate}
\item \label{Eqn=Item=One} For all $\beta \in A, (\beta_1, \beta_2) \in L_\beta^{\alpha, \gamma}$ we have
\begin{equation}\label{Eqn=AlmostOne}
\begin{split}
\vert \Delta_\beta - \Delta_{\beta_1} - \Delta_{v(\beta_1; \beta,  \beta_2)} + \Delta_{\beta_2} \vert \leq & C  \qdim( \beta )^{-1},
\end{split}
\end{equation}
and
\begin{equation} \label{Eqn=AlmostOneB}
\vert \Delta_\beta - \Delta_{\beta_1} \vert \leq  C.
\end{equation}
For all $\beta \in \Irr(\bG) \backslash (A \cup A_{00}), (\beta_1, \beta_2) \in L_\beta^{\alpha, \gamma}$ we have
\begin{equation}\label{Eqn=AlmostOneVanish}
\begin{split}
  \Delta_\beta - \Delta_{\beta_1} - \Delta_{v(\beta_1; \beta,  \beta_2)} + \Delta_{\beta_2}   = & 0.
\end{split}
\end{equation}

\item \label{Eqn=Item=Two} For all $\beta \in A, (\beta_1, \beta_2) \in L_\beta^{\alpha, \gamma}$ we have
\begin{equation}\label{Eqn=AlmostTwo}
\inf_{z \in \mathbb{T}}\Vert V^{\beta_1, \gamma}_{\beta_2} ( V^{\alpha, \beta}_{\beta_1} \otimes \id_\gamma ) - z V^{\alpha, v(\beta_1; \beta,   \beta_2)}_{\beta_2}  (\id_\alpha \otimes V^{\beta, \gamma}_{ v(\beta_1; \beta,  \beta_2) } ) \Vert \leq C    \qdim(\beta)^{-1}.
\end{equation}
For all $\beta \in \Irr(\bG) \backslash (A \cup A_{00}), (\beta_1, \beta_2) \in L_\beta^{\alpha, \gamma}$ we have
\begin{equation}\label{Eqn=AlmostTwoVanish}
\inf_{z \in \mathbb{T}}\Vert V^{\beta_1, \gamma}_{\beta_2} ( V^{\alpha, \beta}_{\beta_1} \otimes \id_\gamma ) - z V^{\alpha, v(\beta_1; \beta,   \beta_2)}_{\beta_2}  (\id_\alpha \otimes V^{\beta, \gamma}_{ v(\beta_1; \beta,  \beta_2) } ) \Vert =0.
\end{equation}

\item \label{Eqn=Item=Three} There exists a polynomial $P$ such that for every $N \in \mathbb{N}$ we have
\begin{equation}\label{Eqn=AlmostThree}
\# \{  \beta \in A \mid \Delta_\beta < N   \} \leq  P(N).
\end{equation}
and we have that $\beta \mapsto \delta(\beta \in A) \qdim(\beta)^{-1}$ is square summable.
\end{enumerate}
\end{dfn}

\begin{rmk}
In summary Definition \ref{Dfn=Almost} entails the following. We cut $\Irr(\bG)$ into three disjoint sets. Each of these sets has a size condition and a condition on estimates of eigenvalues of $\Delta$ as well as certain almost commutations of intertwiners:
\[
\begin{array}{|l|l|l|} \hline
A_{00}  & A & \textrm{The rest: } \Irr(\bG) \backslash (A_{00} \cup A) \\ \hline
\textrm{ -Finite set} & \textrm{-Grows polynomially compared to } \Delta & \textrm{-No size restrictions} \\
\textrm{ -No conditions} & \textrm{-Estimates } \eqref{Eqn=AlmostOne}, \eqref{Eqn=AlmostOneB} \textrm{ and } \eqref{Eqn=AlmostTwo}   & \textrm{-Vanishing of } \eqref{Eqn=AlmostOneVanish}, \eqref{Eqn=AlmostTwoVanish} \\ \hline
\end{array}
\]
We shall usually refer to property \eqref{Eqn=Item=One} as being approximately linear and \eqref{Eqn=Item=Two} as having almost commuting intertwiners. We note that they have to be satisfied for the same choice of $A$ and $A_{00}$ which is why we did not define `approximate linearity' and `almost commuting intertwiners'  as independent notions.
\end{rmk}

\begin{thm}\label{Thm=Monoidal}
The property of $\Phi$ being approximately linear with almost  commuting intertwiners is stable under monoidal equivalence of compact quantum groups.
\end{thm}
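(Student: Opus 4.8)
The plan is to reduce everything to the observation that every ingredient appearing in Definition \ref{Dfn=Almost} is \emph{categorical}, and that a monoidal equivalence preserves all of this data together with the eigenvalues of the QMS. Concretely, let $\pi : (\Irr(\bG_1), \Mor_{\bG_1}) \to (\Irr(\bG_2), \Mor_{\bG_2})$ be a monoidal equivalence and use it to identify $\Irr(\bG_1) = \Irr(\bG_2)$. Starting from a QMS $\Phi^1$ on $L_\infty(\bG_1)$ that is approximately linear with almost commuting intertwiners, Proposition \ref{Prop=MonoidalTransference} produces a QMS $\Phi^2$ on $L_\infty(\bG_2)$ whose eigenvalues $\Delta_\alpha$ agree with those of $\Phi^1$ under the identification. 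Since $\pi$ preserves tensor products, adjoints and composition, it preserves the fusion rules and the subobject relation $\subseteq$ (it sends isometric intertwiners to isometric intertwiners); hence the index sets $L_\beta^{\alpha,\gamma}, R_\beta^{\alpha,\gamma}, L_{\beta,\beta_2}^{\alpha,\gamma}, R_{\beta,\beta_2}^{\alpha,\gamma}$ coincide for $\bG_1$ and $\bG_2$, and we may simply transport the finite sets $A_{00}(\alpha,\gamma)$, the sets $A$, and the $v$-maps along $\pi$, the latter remaining bijections between the coinciding sets. I also record that $\pi$ preserves the quantum dimension $\qdim$, since $\qdim(\alpha)$ is determined by the standard solution of the conjugate equations, which are morphism-theoretic and therefore transported by $\pi$; and that multiplicity-freeness of $\bG_2$ follows from that of $\bG_1$ because $\pi$ preserves the dimensions of morphism spaces.

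The technical heart is to show that $\pi$ acts isometrically on the relevant morphism spaces, which is what makes the intertwiner estimates \eqref{Eqn=AlmostTwo} and \eqref{Eqn=AlmostTwoVanish} transfer. The key point is that in these estimates the source object $\beta_2$ is irreducible: for any $T \in \Mor(\beta_2, \alpha \otimes \beta \otimes \gamma)$ one has $T^\ast T \in \Mor(\beta_2, \beta_2) = \C 1_{\beta_2}$, so $T^\ast T = \Vert T \Vert^2 1_{\beta_2}$, and applying the $\ast$-preserving, composition-preserving map $\pi$ gives $\pi(T)^\ast \pi(T) = \Vert T \Vert^2 1_{\pi(\beta_2)}$, whence $\Vert \pi(T) \Vert = \Vert T \Vert$. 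Because $\pi$ is linear and multiplicative for composition and tensor product, it maps the morphism inside \eqref{Eqn=AlmostTwo} to the analogous expression on $\bG_2$, up to phase factors coming from the fact that each $\pi(V^{\alpha,\beta}_\gamma)$ is only a phase multiple of the canonical intertwiner on $\bG_2$ (both being isometries spanning the same one-dimensional morphism space). Since the estimate contains an infimum over $z \in \mathbb{T}$, these phases are immaterial: as $z$ ranges over $\mathbb{T}$ so does the combined phase, so the two left-hand sides of \eqref{Eqn=AlmostTwo} for $\bG_1$ and $\bG_2$ are in fact \emph{equal}, and likewise for \eqref{Eqn=AlmostTwoVanish}.

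It then remains to transfer the eigenvalue and size conditions, which is immediate: estimates \eqref{Eqn=AlmostOne}, \eqref{Eqn=AlmostOneB}, \eqref{Eqn=AlmostOneVanish} involve only the numbers $\Delta_\beta, \Delta_{\beta_1}, \Delta_{v(\beta_1;\beta,\beta_2)}, \Delta_{\beta_2}$ and $\qdim(\beta)$, all of which are preserved, and the counting and square-summability conditions \eqref{Eqn=AlmostThree} refer only to $A$, the $\Delta_\alpha$ and $\qdim$. Hence $\Phi^2$ satisfies Definition \ref{Dfn=Almost} with the transported sets $A_{00}, A$, the transported $v$-maps, and the \emph{same} constants $C(\alpha,\gamma)$ and polynomial $P$. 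I expect the only genuinely nontrivial steps to be the two invariance facts isolated above—preservation of $\qdim$ and the isometry property of $\pi$ on morphism spaces with irreducible source—after which the verification is bookkeeping; in particular the role of the $\inf_{z \in \mathbb{T}}$ in \eqref{Eqn=AlmostTwo} is precisely to neutralise the phase ambiguity that a monoidal equivalence inevitably introduces into the intertwiners $V^{\alpha,\beta}_\gamma$.
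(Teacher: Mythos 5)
Your proposal is correct and follows essentially the same route as the paper, which simply observes that all ingredients of Definition \ref{Dfn=Almost} (fusion rules, quantum dimensions, norms of intertwiners, the eigenvalues transferred via Proposition \ref{Prop=MonoidalTransference}) are invariants of the rigid C$^\ast$-tensor category and cites \cite{BichonRijdtVaes} for these invariance facts. The only difference is that you prove the two key invariances (preservation of $\qdim$ and isometry of $\pi$ on morphism spaces with irreducible source, plus the role of $\inf_{z\in\mathbb{T}}$ in absorbing phase ambiguities) directly rather than by reference, which is a welcome elaboration but not a different argument.
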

\begin{proof}
Monoidally equivalent compact quantum groups have the same representation category seen as a rigid C$^\ast$-tensor category. In particular the quantum dimension, norms of intertwiners and irreducible representations with their fusion rules are invariant under monoidal equivalence (see \cite[Remarks 3.3, 3.4 and 3.4]{BichonRijdtVaes}). Since all properties in Definition \ref{Dfn=Almost} are expressed in these terms the theorem follows directly.
\end{proof}

The following theorem is clear to specialists.  For completeness we give its proof.

\begin{thm}\label{Thm=SubGroupdot}
Suppose that $\Phi$ is a QMS of central multipliers on a compact quantum group $\bG$. Suppose that $\bH$ is a compact quantum group with $\widehat{\bH} < \widehat{\bG}$. Then  $\Irr(\bH) \subseteq \Irr(\bG)$
and $L_\infty(\bH) \subseteq L_\infty(\bG)$. In particular the restriction of $\Phi$ to $L_\infty(\bH)$ is a QMS of central multipliers.  Furthermore, if $\Phi$ is approximately linear with almost commuting intertwiners, then so is its restriction to $L_\infty(\bH)$.
\end{thm}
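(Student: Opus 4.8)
The plan is to reduce everything to the single structural fact recorded just after the definition of dual quantum subgroups: the inclusion $\Irr(\bH)\subseteq\Irr(\bG)$ realises $\Irr(\bH)$ as a \emph{full} subcategory whose fusion rules and morphism spaces are the restrictions of those of $\bG$. In particular the quantum dimensions, the spaces $\Mor(\cdot,\cdot)$, and the distinguished intertwiners $V^{\beta,\gamma}_\alpha$ agree whether computed in $\bH$ or in $\bG$, and the eigenvalues $\Delta_\alpha$ of the restricted QMS are unchanged for $\alpha\in\Irr(\bH)$. Since every quantity appearing in Definition \ref{Dfn=Almost} is of this kind, approximate linearity with almost commuting intertwiners should transfer by merely intersecting the relevant data with $\Irr(\bH)$.

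First I would dispose of the preliminary assertions. The inclusions $\Irr(\bH)\subseteq\Irr(\bG)$ and $L_\infty(\bH)\subseteq L_\infty(\bG)$ are built into the definition of $\widehat{\bH}<\widehat{\bG}$. Because $\Phi_t$ multiplies each matrix coefficient $x_\alpha$ of $\alpha\in\Irr(\bG)$ by $\exp(-t\Delta_\alpha)$, and $\Pol(\bH)$ is spanned by the matrix coefficients of the representations $\alpha\in\Irr(\bH)\subseteq\Irr(\bG)$, the map $\Phi_t$ preserves $\Pol(\bH)$ and, by normality, $L_\infty(\bH)$. The restricted family is again a strongly continuous semigroup of normal unital completely positive maps, GNS-symmetric with respect to $\varphi_\bH=\varphi_\bG|_{L_\infty(\bH)}$, and is a central multiplier with eigenvalues $(\Delta_\alpha)_{\alpha\in\Irr(\bH)}$.

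The crux is then the following identification, which I expect to be the only step with genuine content. For $\alpha,\beta,\gamma\in\Irr(\bH)$ the sets $L_\beta^{\alpha,\gamma},R_\beta^{\alpha,\gamma},L_{\beta,\beta_2}^{\alpha,\gamma},R_{\beta,\beta_2}^{\alpha,\gamma}$ computed inside $\bH$ coincide with those computed inside $\bG$: if $\beta_1\subseteq\alpha\otimes\beta$ with $\alpha,\beta\in\Irr(\bH)$ then fullness forces $\beta_1\in\Irr(\bH)$, and likewise every second-step subrepresentation $\beta_2$ lies in $\Irr(\bH)$, so the defining subrepresentation conditions select exactly the same pairs. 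Accordingly I would set $A_{00}^{\bH}(\alpha,\gamma)=A_{00}(\alpha,\gamma)\cap\Irr(\bH)$ and $A^{\bH}=A\cap\Irr(\bH)$, with $A_{00}^{\bH}$ still finite and $A^{\bH}\subseteq\Irr(\bH)\setminus A_{00}^{\bH}$, and take the $v$-maps for $\bH$ to be the restrictions of those for $\bG$; since the underlying sets are identical, these restrictions are still bijections.

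With the sets and $v$-maps in hand, the three conditions transfer essentially verbatim. In \eqref{Eqn=AlmostOne} and \eqref{Eqn=AlmostOneB} the quantities $\Delta_\bullet$ and $\qdim(\beta)$ are unchanged, so the same constant $C(\alpha,\gamma)$ serves; and for $\beta\in\Irr(\bH)\setminus(A^{\bH}\cup A_{00}^{\bH})\subseteq\Irr(\bG)\setminus(A\cup A_{00})$ the vanishing \eqref{Eqn=AlmostOneVanish} is inherited directly. Likewise \eqref{Eqn=AlmostTwo} and \eqref{Eqn=AlmostTwoVanish} involve only the intertwiners $V$ and their norms, which are categorical and phase-independent, so the estimates carry over unchanged. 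Finally, for condition \eqref{Eqn=Item=Three}, the inclusion $A^{\bH}\subseteq A$ gives $\#\{\beta\in A^{\bH}\mid\Delta_\beta<N\}\leq P(N)$ with the same polynomial, and $\delta(\beta\in A^{\bH})\qdim(\beta)^{-1}\leq\delta(\beta\in A)\qdim(\beta)^{-1}$ is dominated by a square-summable function, hence square summable. The only non-bookkeeping point is the identification of the $L/R$ sets through fullness of the subcategory.
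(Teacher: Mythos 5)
Your proposal is correct and follows essentially the same route as the paper: the paper likewise observes that $\Irr(\bH)$ sits inside $\Irr(\bG)$ as a full subcategory, that the sets $L^{\alpha,\gamma}_{\beta,\beta_2}$, $R^{\alpha,\gamma}_{\beta,\beta_2}$ and the $v$-maps are unchanged, and that one simply has to verify fewer instances of the conditions in Definition \ref{Dfn=Almost}. Your write-up merely makes explicit (via fullness and closure under subobjects) why the $L/R$ sets agree and why intersecting $A_{00}$ and $A$ with $\Irr(\bH)$ preserves finiteness, the polynomial bound and square summability, which the paper leaves as evident.
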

\begin{proof}
 Indeed, if  $\widehat{\bH} < \widehat{\bG}$ then there exists a surjective $\ast$-homomorphism $\widehat{\pi}: \ell_\infty(\widehat{\bG}) \rightarrow \ell_\infty(\widehat{\bH})$. Since  $\ell_\infty(\widehat{\bG})$ is an $\ell_\infty$-direct sum of finite dimensional simple C$^\ast$-algebras (i.e. matrix algebras) $\widehat{\pi}$ must be either 0 or faithful on each of the simple matrix blocks. Then $\ell_\infty(\widehat{\bH})$ is given by the $\ell_\infty$-direct sum of all matrix blocks for which $\widehat{\pi}$ is faithful. The matrix blocks of $\ell_\infty(\widehat{\bG})$ are labelled by $\Irr(\bG)$ and the matrix blocks of $\ell_\infty(\widehat{\bH})$ are labelled by $\Irr(\bH)$ which thus is a subset of $\Irr(\bG)$. Since $L_\infty(\bH)$ is generated by the matrix coefficients of $\Irr(\bH)$ it must thus be a subalgebra of $L_\infty(\bG)$. We see that $\Phi$ restricts to $L_\infty(\bH)$ and is again a QMS of central multipliers. It is clear that $\Phi$ restricted to $L_\infty(\bH)$ satisfies Definition \ref{Dfn=Almost}  since one has to check less conditions than for the original $\Phi$ (in particular the sets $L^{\alpha, \gamma}_{\beta, \beta_2}$ and $R^{\alpha, \gamma}_{\beta, \beta_2}$ and the bijection $v^{\alpha, \gamma}(\cdot ; \beta, \beta_2)$ stay the same but need only be considered for $\alpha, \beta, \gamma \in \Irr(\bH), \beta_2 \subseteq \alpha \otimes \beta \otimes \gamma$).
\end{proof}

\subsection{Free products}

Our next aim is to show that Definition \ref{Dfn=Almost} is stable under free products.
%The proof parallels the proof of \cite[Proposition 5.1]{CaspersGradient} where it is also explained that a free product of QMS's is again a QMS.

\begin{thm}\label{Thm=FreeProduct}
Let $\Phi^1$ and $\Phi^2$ be QMS's of central multipliers on respective compact quantum groups $\bG_1$ and $\bG_2$. Let $\Phi = \Phi^1 \ast \Phi^2$ be the free product QMS of central multipliers on $\bG_1 \ast \bG_2$. If $\Phi^1$ and $\Phi^2$ are both approximately linear with almost commuting intertwiners then so is $\Phi$.
\end{thm}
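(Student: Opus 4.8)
The plan is to verify Definition \ref{Dfn=Almost} for the free product QMS $\Phi = \Phi^1 \ast \Phi^2$ directly, reducing every computation to the two component QMS's via the explicit fusion rules \eqref{Eqn=FreeProductDecMultFree}. Fix irreducibles $\alpha, \gamma \in \Irr(\bG_1 \ast \bG_2)$, which are reduced words $\alpha = \alpha_1 \ldots \alpha_p$ and $\gamma = \gamma_1 \ldots \gamma_q$. The key structural observation is that the generator $\Delta$ of $\Phi$ is \emph{additive} over the letters of a reduced word: if $\beta = \beta_1 \ldots \beta_r$ is reduced then $\Delta_\beta = \sum_{j} \Delta_{\beta_j}$, where each $\Delta_{\beta_j}$ is computed in the component $\Phi^1$ or $\Phi^2$ to which $\beta_j$ belongs. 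This is how the free product QMS $\Phi^1 \ast \Phi^2$ is defined (the eigenvalues add up over free components), and it is the engine that drives the entire argument.

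First I would analyze the combinatorics of the sets $L_\beta^{\alpha,\gamma}$ and $R_\beta^{\alpha,\gamma}$ for a long reduced word $\beta$. By \eqref{Eqn=FreeProductDecMultFree}, the decomposition of $\alpha \otimes \beta$ and of $\beta \otimes \gamma$ only modifies $\beta$ near its two ends: the interaction of $\alpha$ with $\beta$ takes place in a bounded initial segment of $\beta$ (of length at most $p = \mathrm{length}(\alpha)$), and the interaction of $\gamma$ with $\beta$ takes place in a bounded terminal segment (of length at most $q$). When $\beta$ is long enough these two segments are disjoint, so the left fusion and the right fusion are \emph{independent} and take place entirely inside one component quantum group each. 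Concretely, the $\beta_1$ obtained from $\alpha \otimes \beta$ differs from $\beta$ only in a prefix governed by the fusion rules of $\bG_1$ or $\bG_2$, and $\beta_2$ from $\beta_1 \otimes \gamma$ differs only in a suffix. I would define the set $A_{00}(\alpha,\gamma)$ to absorb all the short words $\beta$ where the prefix and suffix interactions overlap (a finite set, by Lemma \ref{Lem=FrobConsequence1} applied componentwise), and define the good set $A$ in terms of the component good sets $A^{(k)}$.

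Granting that separation, the verification of each clause of Definition \ref{Dfn=Almost} becomes local. For \eqref{Eqn=Item=One}: by additivity of $\Delta$, the alternating sum $\Delta_\beta - \Delta_{\beta_1} - \Delta_{v(\beta_1;\beta,\beta_2)} + \Delta_{\beta_2}$ telescopes so that all the unchanged middle letters of $\beta$ cancel, leaving exactly an analogous alternating sum in the relevant component, which is controlled by \eqref{Eqn=AlmostOne} for $\Phi^1$ or $\Phi^2$; the quantum dimension is multiplicative over reduced words, $\qdim(\beta) = \prod_j \qdim(\beta_j)$, so $\qdim(\beta)^{-1}$ dominates the component bound $\qdim(\text{interacting letter})^{-1}$, and the estimate survives. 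The $v$-map on $\bG_1 \ast \bG_2$ is built by letting the component $v$-maps act on the prefix and suffix separately and leaving the middle untouched; its bijectivity follows from bijectivity of the component maps. Clause \eqref{Eqn=Item=Two} is the same telescoping phenomenon at the level of intertwiners: the intertwiners $V^{\alpha,\beta}_{\beta_1}$ and $V^{\beta,\gamma}_{\cdot}$ act nontrivially only on the end segments and factor through the component intertwiners (tensored with identities on the untouched letters), so the norm difference in \eqref{Eqn=AlmostTwo} reduces to the corresponding component norm difference, again with the stronger denominator $\qdim(\beta)^{-1}$. For \eqref{Eqn=Item=Three}, counting $\beta \in A$ with $\Delta_\beta < N$ reduces to counting reduced words whose letters have component eigenvalues summing to less than $N$; since word length is bounded by $N$ (eigenvalues are bounded below away from $0$ on nontrivial irreducibles) and each slot contributes polynomially by the component bound \eqref{Eqn=AlmostThree}, the total count is polynomial, and square-summability of $\delta(\beta \in A)\qdim(\beta)^{-1}$ follows from the multiplicativity of $\qdim$ together with square-summability in each component.

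The main obstacle I anticipate is \textbf{bookkeeping the boundary cases} where the fusion $\alpha_p \otimes \beta_1$ (or $\beta_r \otimes \gamma_1$) produces the trivial representation, collapsing the word length and thereby coupling the left and right interactions even for moderately long $\beta$; this is exactly the delicate second summand in \eqref{Eqn=FreeProductDec}. Ensuring that $A_{00}$ is genuinely finite while still guaranteeing prefix/suffix separation for all $\beta \notin A_{00}$ — so that the telescoping cancellation in \eqref{Eqn=AlmostOne} and \eqref{Eqn=AlmostTwo} is exact rather than approximate — requires a careful length-comparison argument, and this is where the bulk of the technical work will lie. The phase factor ambiguity in the intertwiners $V$ is harmless here since \eqref{Eqn=AlmostTwo} already takes an infimum over $z \in \mathbb{T}$, but one must check that composing component intertwiners does not introduce uncontrolled phases across the tensor factorization; the stated independence of all expressions from the phase choice handles this.
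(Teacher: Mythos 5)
Your skeleton matches the paper's: additivity of $\Delta$ over the letters of a reduced word, reduction to the component fusion rules via \eqref{Eqn=FreeProductDecMultFree}, and a dichotomy between words where the $\alpha$- and $\gamma$-interactions are separated and words where they couple. You also correctly point to the annihilation/collapse phenomenon as the crux. But the plan rests on a finiteness claim that is false, and this is a genuine gap rather than bookkeeping. The set of $\beta$ for which the two interactions overlap is \emph{infinite}: the coupling occurs exactly for words of the form $\beta = \overline{\alpha}_k \cdots \overline{\alpha}_{k-i+1}\, \beta_{i+1}\, \overline{\gamma}_j \cdots \overline{\gamma}_1$, where the entire prefix is annihilated by $\alpha$, the entire suffix by $\gamma$, and both then fuse with the single middle letter $\beta_{i+1}$ --- which ranges over all of $\Irr(\bG_s)$. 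Lemma \ref{Lem=FrobConsequence1} only bounds the $\beta$ with $1 \subseteq \alpha \otimes \beta \otimes \gamma$; it does not bound this overlap set. So you cannot absorb the overlapping words into $A_{00}$, and you cannot arrange that the cancellations in \eqref{Eqn=AlmostOne} and \eqref{Eqn=AlmostTwo} are ``exact rather than approximate'' for all $\beta \notin A_{00}$. These coupled words are precisely where the hypotheses on $\Phi^1$ and $\Phi^2$ must be used with their genuinely approximate bounds: they constitute the set $A$ (with $\beta_{i+1}$ ranging over the component set $A^s(\alpha_{k-i},\gamma_{j+1})$), the free-product $v$-map is the component $v$-map applied to the middle letter, and the free-product estimates follow from the component ones plus multiplicativity of $\qdim$ over reduced words. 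In your fully separated regime the alternating sum and the intertwiner difference vanish identically, so no component estimate is needed there at all; your sentence ``leaving an analogous alternating sum in the relevant component, controlled by \eqref{Eqn=AlmostOne}'' conflates the two regimes --- as written, your argument never actually invokes the hypothesis on $\Phi^1,\Phi^2$ in a place where it is needed.

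A second, related defect is the growth count for \eqref{Eqn=Item=Three}. Counting all reduced words whose letters lie in component $A$-sets with eigenvalues summing to less than $N$, with ``each slot contributing polynomially'' and length up to $O(N)$, yields a bound of the shape $P(N)^{O(N)}$, which is not polynomial (and the premise that eigenvalues of nontrivial irreducibles are bounded below away from $0$ is not among the hypotheses). The polynomial bound only survives because $A$ must be taken to be the much smaller set of words whose prefix and suffix are \emph{fixed} by $\alpha$ and $\gamma$, with a single free middle letter, giving a count of the form $(k+m-1)^2 P(N)$. Repairing the proposal therefore requires redefining $A$ and $A_{00}$ along these lines and carrying out the coupled-letter case with the approximate component estimates, which is where the real content of the theorem lies.
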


The proof of Theorem \ref{Thm=FreeProduct} will take the rest of this section for which we will fix the following notation. Firstly we let $\Delta$ be the generator of $\Phi$ with eigenvalues $\Delta_\alpha, \alpha \in \Irr(\bG)$. In particular this defines $\Delta_\alpha$ for the subsets $\Irr(\bG_1)$ and $\Irr(\bG_2)$ of $\Irr(\bG)$. The straightforward proof of the following lemma can be found at \cite[Beginning of Section 5]{CaspersGradient}.

\begin{lem}[Leibniz rule]
For $\beta = \beta_1 \ldots \beta_l \in \Irr(\bG)$ a reduced word we have
\[
%\Delta(\beta) = \sum_{r=1}^l \beta_1 \ldots \beta_{r-1} \Delta(\beta_r)  \beta_{r+1} \ldots \beta_l, \qquad \textrm{ and so } \qquad
\Delta_\beta = \sum_{r=1}^l \Delta_{\beta_r}.
\]
\end{lem}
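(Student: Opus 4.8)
The plan is to realise the free product QMS through its generating functional and to reduce the Leibniz rule to the vanishing of a cocycle on reduced words. Recall that a QMS of central multipliers $\Phi^i$ on $\bG_i$ is the convolution semigroup $\Phi^i_t = (\id \otimes \mu^i_t)\circ \Delta_{\bG_i}$ associated with a convolution semigroup of central states $\mu^i_t = \exp_\star(t L^i)$, whose generating functional $L^i = \frac{d}{dt}\big|_{t=0}\mu^i_t$ is a hermitian, $\epsilon$-annihilating, conditionally positive functional on $\Pol(\bG_i)$ acting on matrix coefficients by $L^i(u^{\gamma}_{m,j}) = -\Delta_\gamma\, \delta_{m,j}$ for $\gamma \in \Irr(\bG_i)$. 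I would fix Schürmann triples $(\pi^i, \eta^i, L^i)$ on Hilbert spaces $H^i$, so that $L^i(ab) = \epsilon(a)L^i(b) + L^i(a)\epsilon(b) + \langle \eta^i(a^*), \eta^i(b)\rangle$ with $\eta^i$ a $\pi^i$-$\epsilon$ cocycle. The free product QMS $\Phi = \Phi^1 \ast \Phi^2$ on $\Pol(\bG) = \Pol(\bG_1) \ast \Pol(\bG_2)$ is the one generated by the free product generating functional $L$ (cf. \cite{CaspersGradient}), whose Schürmann triple is $\pi = \pi^1 \oplus \pi^2$ and $\eta = \eta^1 \oplus \eta^2$ on $H^1 \oplus H^2$, with the convention that $\Pol(\bG_1)$ acts on $H^2$ through $\epsilon$ and vice versa; in particular $L$ restricts to $L^i$ on each $\Pol(\bG_i)$.

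The heart of the proof is the identity that $L$ vanishes on reduced words of counit-centered elements of length at least two. First I would record that $\eta$ vanishes on every reduced centered word $\mathring a_1 \cdots \mathring a_m$ of length $m \geq 2$: expanding $\eta$ by the cocycle rule produces terms in which some $\pi(\mathring a_s)$ acts across a factor switch $k_s \neq k_{s+1}$ and is therefore $\epsilon(\mathring a_s) = 0$. Granting this, for a reduced centered word of length $l \geq 2$ both counit terms in the cocycle expansion vanish, so $L(\mathring a_1 \cdots \mathring a_l) = \langle \eta((\mathring a_1 \cdots \mathring a_{l-1})^*), \eta(\mathring a_l)\rangle$; the left cocycle vanishes when $l - 1 \geq 2$, and when $l = 2$ the two vectors lie in the orthogonal summands $H^{k_1} \perp H^{k_2}$ since $k_1 \neq k_2$. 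In either case the inner product is zero. Decentering each factor via $a_r = \mathring a_r + \epsilon(a_r)1$ and using that dropped letters leave shorter reduced words then yields the derivation formula $L(a_1 \cdots a_l) = \sum_{r=1}^l \big(\prod_{s \neq r}\epsilon(a_s)\big) L(a_r)$ for arbitrary reduced words.

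To conclude, I would apply this to matrix coefficients of $\beta = \beta_1 \cdots \beta_l$. By Proposition \ref{Thm=FreeProductRep} a reduced tensor product of irreducibles is irreducible, so $u^\beta$ has matrix coefficients $u^{\beta}_{M,J} = u^{\beta_1}_{m_1, j_1} \cdots u^{\beta_l}_{m_l, j_l}$ with $M = (m_1, \dots, m_l)$ and $J = (j_1, \dots, j_l)$. Using $\epsilon(u^{\beta_s}_{m_s, j_s}) = \delta_{m_s, j_s}$ and $L(u^{\beta_r}_{m_r, j_r}) = -\Delta_{\beta_r}\delta_{m_r, j_r}$, the derivation formula gives $L(u^{\beta}_{M,J}) = -\big(\sum_r \Delta_{\beta_r}\big)\prod_s \delta_{m_s, j_s}$. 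On the other hand, since $\Phi$ is a central multiplier with eigenvalue $\Delta_\beta$ on the irreducible $\beta$, we have $L(u^{\beta}_{M,J}) = -\Delta_\beta\, \delta_{M,J} = -\Delta_\beta \prod_s \delta_{m_s, j_s}$. Comparing the two expressions (for instance by taking $M = J$) yields $\Delta_\beta = \sum_{r=1}^l \Delta_{\beta_r}$.

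I expect the only genuine work to be the cocycle vanishing in the second paragraph, namely verifying that the free product Schürmann triple is set up so that $\eta$ kills reduced centered words and that $H^1 \perp H^2$, together with the background fact that $L$ indeed generates a QMS of central multipliers on $\bG_1 \ast \bG_2$ (so that the eigenvalue $\Delta_\beta$ exists as used). The remaining steps — the decentering expansion and the evaluation on matrix coefficients — are routine bookkeeping with the counit and the cocycle relation.
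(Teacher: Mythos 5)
There is a genuine gap, and it sits in the two places your last paragraph identifies as ``the only genuine work''. First, the decentering step fails. If you expand $a_1\cdots a_l=\sum_{S}\bigl(\prod_{r\notin S}\epsilon(a_r)\bigr)\prod_{r\in S}\mathring a_r$, the subwords with $|S|\geq 2$ are \emph{not} reduced alternating words: dropping an interior letter makes its two neighbours adjacent, and in an alternating word those neighbours lie in the \emph{same} free factor, so they merge into a single letter of one $\Pol(\bG_{k})$. For example, for $l=3$ the subset $S=\{1,3\}$ produces $\mathring a_1\mathring a_3\in\Pol(\bG_{k_1})$, a word of length one, and $L(\mathring a_1\mathring a_3)=L^{k_1}(\mathring a_1\mathring a_3)=\langle\eta^{k_1}(\mathring a_1^{\ast}),\eta^{k_1}(\mathring a_3)\rangle$ is in general nonzero. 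So the vanishing of $L$ on reduced counit-centered words of length $\geq 2$ does not yield the derivation formula. Second, the proposed free-product Sch\"urmann triple on $H^1\oplus H^2$, with $\Pol(\bG_1)$ acting on $H^2$ through $\epsilon$, does not generate the free product QMS at all. Take $\bG_1=\bG_2=\widehat{\mathbb{Z}}$, so $\Pol(\bG)=\mathbb{C}[\mathbb{F}_2]$ with generators $\lambda_a,\lambda_b$, and $\psi_i(n)=n^2$. A direct cocycle computation with your triple (using that $\pi(\lambda_b)$ acts as $\epsilon(\lambda_b)=1$, i.e.\ as the identity, on $H^1$) gives $L(\lambda_a\lambda_b\lambda_a)=-\psi_1(a^2)-\psi_2(b)=-5$, whereas the Leibniz rule requires $-\psi_1(a)-\psi_2(b)-\psi_1(a)=-3$. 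Your triple produces the functional $-\psi_1\circ p_1-\psi_2\circ p_2$ ($p_i$ the canonical retractions onto the factors), not the block-sum. A correct cocycle for the Boca free product must live on a genuinely larger space (a free product of pointed Hilbert spaces), where $\pi$ of one factor shifts the other factor's cocycle space into fresh orthogonal copies; with $\pi$ acting by $\epsilon$ across factors the crucial cross terms $\langle\pi(a_2)\eta^1(a_1^{\ast}),\eta^1(a_3)\rangle$ do not vanish.

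The paper's own proof (deferred to the beginning of Section 5 of \cite{CaspersGradient}) avoids all of this by working with the Haar state rather than the counit. The free product QMS $\Phi^1\ast\Phi^2$ is Boca's free product of the ucp maps $\Phi^1_t,\Phi^2_t$, which by construction acts letterwise on reduced words of elements centered with respect to the free product \emph{state}. By Peter--Weyl orthogonality \eqref{Eqn=PeterWeyl} every matrix coefficient of a nontrivial irreducible already satisfies $\varphi(u^{\gamma}_{ij})=0$, so $u^{\beta_1}_{m_1j_1}\cdots u^{\beta_l}_{m_lj_l}$ is a reduced word of $\varphi$-centered elements as it stands, no decentering is needed, and
\[
\Phi_t\bigl(u^{\beta_1}_{m_1j_1}\cdots u^{\beta_l}_{m_lj_l}\bigr)=\prod_{r=1}^{l}e^{-t\Delta_{\beta_r}}\;u^{\beta_1}_{m_1j_1}\cdots u^{\beta_l}_{m_lj_l},
\]
which is the Leibniz rule. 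If you want to salvage the infinitesimal route, you would have to construct the free product cocycle on the correct enlarged Hilbert space and then still verify that the resulting semigroup coincides with $\Phi^1\ast\Phi^2$; this is substantially more work than the one-line argument above.
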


 Now let
\[
\alpha = \alpha_1 \ldots \alpha_k,\qquad \gamma = \gamma_1 \ldots \gamma_m,
\]
  in $\Irr(\bG)$ be  reduced words of representations of lengths $k$ and $m$ respectively. So $\alpha_i, i = 1, \ldots, k$ is alternatingly in $\Irr(\bG_{1})$ and $\Irr(\bG_2)$ and similarly for $\gamma_i$.
In case $\alpha_i, \gamma_j \in \Irr(\bG_1)$ (or $\alpha_i, \gamma_j \in \Irr(\bG_2)$) we define $A_{00}^1(\alpha_{i}, \gamma_{j})$ and $A^1(\alpha_i, \gamma_j)$ (or $A_{00}^2(\alpha_{i}, \gamma_{j})$ and $A^2(\alpha_i, \gamma_j)$) to be the sets $A_{00}$ and $A$ of Defintion \ref{Dfn=Almost} for  $\bG_1$ (or $\bG_2$) with respect to $\alpha_i, \gamma_j$ and $\Phi^1$ (or $\Phi^2$). This makes sense because of the assumption that $\alpha_i$ and $\gamma_j$ are representations of the same quantum group.

\vspace{0.3cm}

\noindent {\it Definition of $A_{00}$ and $A$ associated to $\alpha, \gamma \in \bG$.} The set $A_{00} \subseteq \Irr(\bG)$ will consist of all representations $\beta \in \Irr(\bG)$ of the following form:
\begin{itemize}
\item $\beta$ equals a reduced word $\beta = \overline{\alpha}_k \ldots \overline{\alpha}_{k-i+1} \overline{\gamma}_{j}  \ldots \overline{\gamma}_{1}$ for some $0 \leq i \leq k, 0 \leq j \leq m$.
\item $\beta$ equals a reduced word $\beta = \overline{\alpha}_k \ldots \overline{\alpha}_{k-i+1} \beta_{i+1} \overline{\gamma}_{j}  \ldots \overline{\gamma}_{1}$ for some $0 \leq i < k, 0 \leq j < m$ and at least one of the following holds:
\begin{itemize}
\item  $\beta_{i+1} \in A_{00}^s(\alpha_{k-i}, \gamma_{j+1})$ in case there is $s \in 1,2$ such that  $\alpha_{k-i}, \gamma_{j+1} \in \Irr(\bG_s)$,
\item $1 \subseteq \alpha_{k-i} \otimes \beta_{i+1}  \otimes \gamma_{j+1}$.
\end{itemize}
\end{itemize}
Since $A_{00}^s, s=1,2$ is finite (for the first sub-bullet) and we have Lemma \ref{Lem=FrobConsequence1}  (for the second sub-bullet) we see that $A_{00}$ is a finite set. We set $A \subseteq \Irr(\bG)$ to be the set of representations $\beta \in \Irr(\bG)$ of the following form:
\begin{itemize}
\item $\beta$ equals a reduced word  $\beta = \overline{\alpha}_k \ldots \overline{\alpha}_{k-i+1} \beta_{i+1} \overline{\gamma}_{j}  \ldots \overline{\gamma}_{1}$ for some $0 \leq i < k, 0 \leq j < m$ and $\beta_{i+1} \in A^s(\alpha_{k-i}, \gamma_{j+1})$ in case there is $s \in 1,2$ such that  $\alpha_{k-i-1}, \gamma_{j+1} \in \Irr(\bG_s)$.
\end{itemize}

\vspace{0.3cm}

As part of the proof of Theorem \ref{Thm=FreeProduct} we shall at this point already establish that Property \eqref{Eqn=Item=Three} of Definiton \ref{Dfn=Almost} holds.
\begin{lem}
 Property \eqref{Eqn=Item=Three} holds for $\bG$ and the above choice of $A$.
\end{lem}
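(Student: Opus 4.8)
The plan is to exploit that, by construction, every $\beta \in A$ is a reduced word whose outer letters are completely determined by $\alpha$ and $\gamma$, so that the only genuine freedom lies in a single middle letter ranging over one of finitely many sets $A^s(\alpha_{k-i}, \gamma_{j+1})$. First I would write $A$ as the finite union $A = \bigcup_{0 \le i < k,\ 0 \le j < m} A_{(i,j)}$, where $A_{(i,j)}$ consists of those $\beta \in A$ of the form $\beta = w_1\, \beta_{i+1}\, w_2$ with fixed prefix $w_1 = \overline{\alpha}_k \ldots \overline{\alpha}_{k-i+1}$, fixed suffix $w_2 = \overline{\gamma}_j \ldots \overline{\gamma}_1$, and $\beta_{i+1} \in A^s(\alpha_{k-i}, \gamma_{j+1})$ for the index $s$ determined by $(i,j)$. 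As there are only $k m$ pairs $(i,j)$, it is enough to establish both parts of Property \eqref{Eqn=Item=Three} for a single block $A_{(i,j)}$ and then sum.

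For the polynomial count I would invoke the Leibniz rule on the reduced word $\beta = w_1 \beta_{i+1} w_2$ to get $\Delta_\beta = c_{i,j} + \Delta_{\beta_{i+1}}$, where $c_{i,j} = \sum_{r=k-i+1}^{k} \Delta_{\overline{\alpha}_r} + \sum_{r=1}^{j} \Delta_{\overline{\gamma}_r} \ge 0$ depends only on $(i,j)$ and $\Delta_{\beta_{i+1}}$ is the $\Phi^s$-eigenvalue of $\beta_{i+1}$ (which coincides with its $\Phi$-eigenvalue, by the free product construction). Hence the assignment $\beta_{i+1} \mapsto \beta$ maps $\{\beta_{i+1} \in A^s(\alpha_{k-i}, \gamma_{j+1}) : \Delta_{\beta_{i+1}} < N\}$ onto $\{\beta \in A_{(i,j)} : \Delta_\beta < N\}$ (here $c_{i,j} \ge 0$ is used), so the cardinality of the latter is at most that of the former, which is bounded by $P^s_{\alpha_{k-i}, \gamma_{j+1}}(N)$ by Property \eqref{Eqn=Item=Three} for $\bG_s$. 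Summing these finitely many polynomials over $(i,j)$ produces the required polynomial $P$.

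For square summability I would use multiplicativity of the quantum dimension: reduced words are irreducible (Proposition \ref{Thm=FreeProductRep}) and $\qdim$ is multiplicative under tensor products, whence $\qdim(\beta) = \qdim(w_1)\qdim(\beta_{i+1})\qdim(w_2)$ with $\qdim(w_1), \qdim(w_2) \ge 1$. Thus $\qdim(\beta)^{-1} \le \qdim(\beta_{i+1})^{-1}$, and since $\beta_{i+1} \mapsto \beta$ is onto $A_{(i,j)}$,
\[
\sum_{\beta \in A_{(i,j)}} \qdim(\beta)^{-2} \le \sum_{\beta_{i+1} \in A^s(\alpha_{k-i}, \gamma_{j+1})} \qdim(\beta_{i+1})^{-2} < \infty,
\]
the finiteness being exactly the square summability in Property \eqref{Eqn=Item=Three} for $\bG_s$; summing over the finitely many $(i,j)$ completes the argument.

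I do not anticipate a genuine obstacle: the proof is bookkeeping built on the Leibniz rule and the multiplicativity of $\qdim$. The only points that deserve care are verifying that the $\bG_s$-eigenvalues and quantum dimensions of the middle letter $\beta_{i+1}$ agree with those computed in $\bG = \bG_1 \ast \bG_2$, and observing that for both upper bounds only surjectivity (not injectivity) of $\beta_{i+1} \mapsto \beta$ is needed, so that uniqueness of the reduced decomposition never has to be addressed.
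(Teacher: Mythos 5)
Your proposal is correct and follows essentially the same route as the paper: decompose $A$ into finitely many blocks indexed by the prefix/suffix data $(i,j)$, apply the Leibniz rule to reduce both the eigenvalue count and the quantum-dimension sum to the single middle letter ranging over $A^s(\alpha_{k-i},\gamma_{j+1})$, and invoke Property \eqref{Eqn=Item=Three} for $\bG_s$. The paper states the polynomial bound with the crude constant $(k+m-1)^2 P(N)$ and dismisses square summability as "similar reasoning"; you make both steps explicit (nonnegativity of the eigenvalues, multiplicativity of $\qdim$, and that only surjectivity of $\beta_{i+1}\mapsto\beta$ is needed), which is a faithful filling-in rather than a different argument.
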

\begin{proof}
The QMS's on $\bG_1$ and $\bG_2$ are both approximately linear with almost commuting intertwiners. Therefore let $P$ be a polynomial such that for all  possible choices $s = 1,2$ and $1 \leq i \leq k, 1 \leq j \leq m$ such that $\alpha_i, \gamma_j \in \Irr(\bG_s)$   we have for all $N \in \mathbb{N}$ that
\[
\# \{ \widetilde{\beta} \in A^s(\alpha_i, \gamma_j) \mid \Delta_{\widetilde{\beta}} \leq N \} \leq P(N).
\]
Suppose that $\beta \in A$. Then from the definition of $A$ we see that the length of the reduced expression $\beta = \beta_1 \ldots \beta_l$ cannot be longer than the sum of the lengths of $\alpha$ and $\gamma$ minus 1, i.e. $l \leq k +m -1$. Moreover we may write  $\beta = \beta_1 \ldots \beta_l =  \overline{\alpha}_k \ldots \overline{\alpha}_{k-i+1} \beta_{i+1} \overline{\gamma}_{j}  \ldots \overline{\gamma}_{1}$ for some $0 \leq i < k, 0 \leq j < m$ with $i+j+2=l$ and there is an $s = 1,2$ such that $\beta_{i+1} \in A^s(\alpha_{k-i}, \gamma_{j+1})$.
 We have by the Leibniz rule $\Delta_\beta := \Delta_{\beta_1 \ldots \beta_l} =  \sum_{r=1}^l \Delta_{\beta_r}$.
  If $\Delta_\beta \leq N$ then certainly   $\Delta_{\beta_{i+1}} \leq N$. Therefore, we crudely estimate,
 \[
 \# \{ \beta \in A \mid \Delta_\beta \leq N \} \leq (k+m-1)^2 P(N).
 \]
 This concludes the proof of the growth bound on $A$ as in \eqref{Eqn=Item=Three} of Definition \ref{Dfn=Almost}. From a similar reasoning it also follows that $\beta \mapsto \delta(\beta) \qdim(\beta)^{-1}$ is square summable.
 \end{proof}

\noindent {\it Definition of the bijections $v^{\alpha, \beta}(\: \cdot \: ; \beta, \beta_2)$ for $\bG$.}  Take $\beta \in \Irr(\bG) \backslash A_{00}$. There are three cases to be treated.

\vspace{0.3cm}

\noindent {\it Case 1.}
% First assume that $\beta  \not \in A$ (and still $\beta \not \in A_{00}$).
Assume that there exists some $i < j$ such that we have a decomposition as a reduced word
\[
\beta = (\beta_1 \ldots \beta_i) (\beta_{i+1} \ldots \beta_{j-1}) (\beta_j \ldots \beta_l),
\]
where $1 \leq i$ is the smallest index for which $\beta_i$ is not the conjugate of $\alpha_{k-i+1}$ (and if this does not exist then $i=1$) and $j\leq l$ is the largest index such that $\beta_{j}$ is not the conjugate of $\gamma_{l-j+1}$  (and if this does not exist then $j=l$).
Heuristically this means that in $\alpha \otimes \beta \otimes \gamma$ the letters of $\alpha$ can annihilate at most the first $i-1$ letters of $\beta$ and that the letters of $\gamma$ can annihilate at most the last $l-j$ letters of $\beta$. More precisely, we get the following.
 The irreducible representations contained in $\alpha \otimes \beta \otimes  \gamma$ are precisely given by representations that have a reduced expression
 \[
 \beta'  (\beta_{i+1} \ldots \beta_{j-1}) \beta'' \qquad \textrm{ with } \beta'  \subseteq \alpha \otimes (\beta_1 \ldots \beta_i), \beta'' \subseteq
(\beta_j \ldots  \beta_l) \otimes \gamma
\]
 irreducible. Furthermore, we have singleton sets,
\[
\begin{split}
L^{\alpha, \gamma}_{\beta, \beta'  (\beta_{i+1} \ldots \beta_{j-1}) \beta''} =  \left\{    \beta'  (\beta_{i+1} \ldots \beta_{l})    \right\}, \qquad
R^{\alpha, \gamma}_{\beta, \beta'  (\beta_{i+1} \ldots \beta_{j-1}) \beta''} =  \left\{       (\beta_{1} \ldots \beta_{j-1}) \beta''  \right\}.
\end{split}
\]
%Every corepresentation contained in $\alpha \beta$ is of the form $\beta'  (\beta_{i+1} \ldots \beta_{j-1}) (\beta_j \ldots  \beta_l)$ with $\beta' \subseteq \alpha (\beta_1 \ldots \beta_i)$.
%Every corepresentation contained in $\beta \gamma$ is of the form $(\beta_1 \ldots \beta_i)(\beta_{i+1}  \ldots \beta_{j-1}) \beta''$ with $\beta'' \subseteq (\beta_j \ldots \beta_l) \gamma$.
  We therefore set the bijection from  $L^{\alpha, \gamma}_{\beta, \beta'  (\beta_{i+1} \ldots \beta_{j-1}) \beta''}$ to $R^{\alpha, \gamma}_{\beta, \beta'  (\beta_{i+1} \ldots \beta_{j-1}) \beta''}$ by
\[
v(   \beta'  (\beta_{i+1} \ldots  \beta_l)   ;   \beta , \beta'  (\beta_{i+1} \ldots \beta_{j-1}) \beta'' )
=
(\beta_1  \ldots \beta_{j-1}) \beta''.
\]

\vspace{0.3cm}

\noindent {\it Case 2.}
%Next assume that $\beta \in A$, which splits into two cases that should be treated separately.
Assume that we have a reduced expression
\begin{equation}\label{Eqn=BetaExpansion}
\beta = \beta_1 \ldots \beta_l =  \overline{\alpha}_k \ldots \overline{\alpha}_{k-i+1} \beta_{i+1} \overline{\gamma}_{j}  \ldots \overline{\gamma}_{1},
\end{equation}
for some $0 \leq i < k, 0 \leq j < m$ with $i+j +1 = l$. Moreover since $\beta \not \in A_{00}$ we assume that $\beta_{i+1} \not \in A^s_{00}(\alpha_{k-i}, \gamma_{j+1}), s=1,2$.

 A representation contained in $\alpha \otimes \beta \otimes \gamma$ can have two different forms that determine Case 2 and Case 3. In Case 2 we assume that  $\alpha_{k-i}, \gamma_{j+1}$ and $\beta_{i+1}$ are representations of the same quantum group. Moreover, we assume that we have a subrepresentation of $\alpha \otimes \beta \otimes \gamma$ of the form $\alpha_1 \ldots  \alpha_{k-i-1} \beta_{i+1}'' \gamma_{j+2} \ldots \gamma_{m}$ where $\beta_{i+1}'' \subseteq \alpha_{k-i} \otimes \beta_{i+1} \otimes \gamma_{j+1}$ is irreducible. $\beta_{i+1}''$ is further non-trivial
 %nor is contained in $A^s_{00}( \alpha_{k-i-1}, \gamma_{j+1})    $
 since $\beta \not \in A_{00}$. So the expression $\alpha_1 \ldots  \alpha_{k-i-1} \beta_{i+1}'' \gamma_{j+2} \ldots \gamma_{m}$ is reduced.
   In this case, using that we already observed that $\beta_{i+1} \not \in A^s_{00}(\alpha_{k-i}, \gamma_{j+1}), s=1,2$, so that the sets below are defined, we have
\[
\begin{split}
L^{\alpha, \gamma}_{\beta,   \alpha_1 \ldots  \alpha_{k-i-1} \beta_{i+1}'' \gamma_{j+2} \ldots \gamma_{m} } = & \left\{  \alpha_1 \ldots  \alpha_{k-i-1} \beta_{i+1}' \beta_{i+2} \ldots \beta_l \mid \beta_{i+1}'  \in L^{\alpha_{k-i}, \gamma_{j+1}}_{\beta_{i+1}, \beta_{i+1}''}  \right\}, \\
R^{\alpha, \gamma}_{\beta,   \alpha_1 \ldots  \alpha_{k-i-1} \beta_{i+1}'' \gamma_{j+2} \ldots \gamma_{m} } = & \left\{
 \beta_1 \ldots \beta_i \beta_{i+1}' \gamma_{j+2} \ldots \gamma_{m}
 \mid \beta_{i+1}'  \in R^{\alpha_{k-i}, \gamma_{j+1}}_{\beta_{i+1}, \beta_{i+1}''}  \right\}.
 \end{split}
\]
Since there is by assumption a bijection $v(\: \cdot; \beta_{i+1}, \beta_{i+1}''): L^{\alpha_{k-i}, \gamma_{j+1}}_{\beta_{i+1}, \beta_{i+1}''} \rightarrow R^{\alpha_{k-i}, \gamma_{j+1}}_{\beta_{i+1}, \beta_{i+1}''}$ we may set
\[
v( \alpha_1 \ldots  \alpha_{k-i-1} \beta_{i+1}' \beta_{i+2} \ldots \beta_l; \beta,   \alpha_1 \ldots  \alpha_{k-i-1} \beta_{i+1}'' \gamma_{j+2} \ldots \gamma_{m} )
= \beta_1 \ldots \beta_i v( \beta_{i+1}'; \beta_{i+1}, \beta_{i+1}'') \gamma_{j+2} \ldots \gamma_{m},
\]
for $\beta_{i+1}'  \in L^{\alpha_{k-i}, \gamma_{j+1}}_{\beta_{i+1}, \beta_{i+1}''}$. By the previous this is then a bijection
\[
v( \: \cdot \: ; \beta,   \alpha_1 \ldots  \alpha_{k-i-1} \beta_{i+1}'' \gamma_{j+2} \ldots \gamma_{m} ): L^{\alpha, \gamma}_{\beta,   \alpha_1 \ldots  \alpha_{k-i-1} \beta_{i+1}'' \gamma_{j+2} \ldots \gamma_{m} } \rightarrow
R^{\alpha, \gamma}_{\beta,   \alpha_1 \ldots  \alpha_{k-i-1} \beta_{i+1}'' \gamma_{j+2} \ldots \gamma_{m} }.
\]

\vspace{0.3cm}

\noindent {\it Case 3.} We still assume that $\beta$ is written as \eqref{Eqn=BetaExpansion} and treat the remaining case.
The other form that a representation contained in  $\alpha \otimes \beta \otimes \gamma$ can have is a {\it reduced} expression $\beta'  \beta''$   with either $\beta'  \subseteq  \alpha \otimes (\overline{\alpha}_k \ldots \overline{\alpha}_{k-i-1} \beta_{i+1})$ and $\beta'' \subseteq  (\overline{\gamma}_{j}  \ldots \overline{\gamma}_{1}) \otimes \gamma$, or $\beta'  \subseteq \alpha \otimes ( \overline{\alpha}_k \ldots \overline{\alpha}_{k-i-1})$ and $\beta'' \subseteq (\beta_{i+1}   \overline{\gamma}_{j}  \ldots \overline{\gamma}_{1}) \otimes \gamma$.
% Moreover, there is no interaction between $\beta'$ and $\beta''$ in $\beta'  \beta''$ (i.e. $\beta' \beta''$ is reduced).
 We treat the first of these cases, the second one can be treated similarly. In fact both cases are rather close to Case 1. In this case we get
\[
\begin{split}
L^{\alpha, \gamma}_{\beta,  \beta'  \beta'' } = & \left\{   \beta'  \overline{\gamma}_{j}  \ldots \overline{\gamma}_{1} \right\},  \qquad
R^{\alpha, \gamma}_{\beta,   \beta' \beta''  } =   \left\{   \overline{\alpha}_k \ldots \overline{\alpha}_{k-i-1} \beta_{i+1} \beta''      \right\}. \\
 \end{split}
\]
Therefore we may set the bijection $L^{\alpha, \gamma}_{\beta,  \beta'  \beta'' }  \rightarrow R^{\alpha, \gamma}_{\beta,  \beta'  \beta'' }$ by
\[
v(\beta'  \overline{\gamma}_{j}  \ldots \overline{\gamma}_{1} ; \beta, \beta' \beta'' ) =   \overline{\alpha}_k \ldots \overline{\alpha}_{k-i-1} \beta_{i+1} \beta''.
\]

\vspace{0.3cm}

\begin{rmk}\label{Rmk=Exhaust}
Note that Cases 1, 2 and 3 exhaust all the cases for $\beta \not \in A_{00}$. Indeed the only other possible form that a $\beta \in \Irr(\bG)$ can have is  $\beta = \beta_1 \ldots \beta_l =  \overline{\alpha}_k \ldots \overline{\alpha}_{k-i}  \overline{\gamma}_{j}  \ldots \overline{\gamma}_{1}$ for suitable $i,j$ but those representations are in $A_{00}$. It should also be noted that if $\beta$ falls in Case 1 then $\beta \not \in A$.
\end{rmk}

\vspace{0.3cm}

In the following proof we need the following notation. Let $V: K_1 \otimes K_2 \rightarrow K_3$ and let $W: H_1 \otimes H_2 \rightarrow H_3$ with $K_i$ and $H_i$ Hilbert spaces. Then
\[
V \boxtimes W: K_1 \otimes H_1 \otimes H_2 \otimes K_2 \rightarrow K_3 \otimes H_3
\]
 is the map that sends $\xi_1 \otimes \eta_1 \otimes \eta_2 \otimes \xi_2$ to $V (\xi_1 \otimes \xi_2) \otimes W (\eta_1 \otimes \eta_2)$. Note that if $H_3 = \mathbb{C}$ then the range space simplifies to  $K_3 \otimes H_3 = K_3$.

\begin{prop}
Properties \eqref{Eqn=Item=One} and \eqref{Eqn=Item=Two} of Definition \ref{Dfn=Almost} hold for the above choices.
\end{prop}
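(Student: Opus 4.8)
The plan is to verify Properties \eqref{Eqn=Item=One} and \eqref{Eqn=Item=Two} by running through the three cases used to define the $v$-maps, reducing in each case to the corresponding hypotheses on $\Phi^1$ and $\Phi^2$. The two tools driving the reduction are the Leibniz rule $\Delta_{\beta_1\ldots\beta_l}=\sum_r\Delta_{\beta_r}$ on the eigenvalue side and the factorisation of free product intertwiners through the $\boxtimes$ construction on the intertwiner side. The decisive structural observation is that Case 2 is the only case producing a genuinely nonzero quantity, and the only case in which $\beta$ can lie in $A$; Cases 1 and 3 will give \emph{exact} vanishing of both differences, which simultaneously covers the estimates \eqref{Eqn=AlmostOne}, \eqref{Eqn=AlmostTwo} (trivially) and the vanishing statements \eqref{Eqn=AlmostOneVanish}, \eqref{Eqn=AlmostTwoVanish}. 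By Remark \ref{Rmk=Exhaust} these three cases exhaust all $\beta\notin A_{00}$.

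For Property \eqref{Eqn=Item=One} I first treat Case 2, where $\beta$ has the form \eqref{Eqn=BetaExpansion}, the intermediate representation $\beta_1\in L^{\alpha,\gamma}_{\beta,\beta_2}$ equals $\alpha_1\ldots\alpha_{k-i-1}\beta'_{i+1}\beta_{i+2}\ldots\beta_l$, and $\beta_2=\alpha_1\ldots\alpha_{k-i-1}\beta''_{i+1}\gamma_{j+2}\ldots\gamma_m$. All four representations are reduced words, so I expand $\Delta_\beta$, $\Delta_{\beta_1}$, $\Delta_{v(\beta_1;\beta,\beta_2)}$ and $\Delta_{\beta_2}$ by the Leibniz rule. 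The letters coming from the fixed words $\alpha$ and $\gamma$ (the surviving head $\alpha_1\ldots\alpha_{k-i-1}$, the conjugate block $\overline{\alpha}_k\ldots\overline{\alpha}_{k-i+1}$, and symmetrically on the $\gamma$-side) each occur in exactly two of the four terms with opposite signs and cancel, leaving precisely $\Delta_{\beta_{i+1}}-\Delta_{\beta'_{i+1}}-\Delta_{v(\beta'_{i+1};\beta_{i+1},\beta''_{i+1})}+\Delta_{\beta''_{i+1}}$, i.e. the left-hand side of \eqref{Eqn=AlmostOne} for $\bG_s$ with $(\alpha_{k-i},\gamma_{j+1})$ in place of $(\alpha,\gamma)$ and $(\beta_{i+1},\beta''_{i+1})$ in place of $(\beta,\beta_2)$. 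If $\beta_{i+1}\in A^s(\alpha_{k-i},\gamma_{j+1})$ this is at most $C^s\qdim(\beta_{i+1})^{-1}$; since $\qdim$ is multiplicative over reduced words and the remaining letters of $\beta$ lie among the (finitely many, fixed) letters of $\alpha$ and $\gamma$, one has $\qdim(\beta)\leq\qdim(\alpha)\qdim(\gamma)\qdim(\beta_{i+1})$, hence $\qdim(\beta_{i+1})^{-1}\leq\qdim(\alpha)\qdim(\gamma)\qdim(\beta)^{-1}$ and \eqref{Eqn=AlmostOne} follows with $C(\alpha,\gamma)=\qdim(\alpha)\qdim(\gamma)\max_s C^s$. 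If instead $\beta_{i+1}\notin A^s\cup A^s_{00}$ the $\bG_s$-quantity is exactly $0$, giving \eqref{Eqn=AlmostOneVanish}; this matches the definition of $A$ for $\bG$. Estimate \eqref{Eqn=AlmostOneB} is the easiest: $\Delta_\beta-\Delta_{\beta_1}$ is a bounded sum of fixed $\Delta_{\alpha_r}$ plus $\Delta_{\beta_{i+1}}-\Delta_{\beta'_{i+1}}$, controlled by \eqref{Eqn=AlmostOneB} on $\bG_s$. In Cases 1 and 3 the $v$-map only repartitions the reduced word without any genuine fusion at the middle, so the Leibniz sums telescope to exactly $0$, and since such $\beta$ lie outside $A$ only \eqref{Eqn=AlmostOneVanish} is required.

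For Property \eqref{Eqn=Item=Two} the mechanism is identical but now at the level of morphism spaces, and this is the step I expect to be the main obstacle. Using the structure of the morphism spaces of a free product (the content behind Proposition \ref{Thm=FreeProductRep}) and the $\boxtimes$ notation, I claim each of the intertwiners $V^{\alpha,\beta}_{\beta_1}$, $V^{\beta_1,\gamma}_{\beta_2}$, $V^{\beta,\gamma}_{v}$, $V^{\alpha,v}_{\beta_2}$ factors as an outer frame of annihilation intertwiners attached to the letters of $\alpha$, $\gamma$ and their conjugates, composed via $\boxtimes$ with a single $\bG_s$-intertwiner attached to $\beta_{i+1}$. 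Tracking the two compositions in \eqref{Eqn=AlmostTwo} (one fuses $\alpha$ into $\beta$ first, the other fuses $\beta$ into $\gamma$ first) shows that both use the \emph{same} outer frame $W$ of isometries determined by the fixed $\alpha,\gamma$, and differ only in the middle, where the first reduces to $V^{\beta'_{i+1},\gamma_{j+1}}_{\beta''_{i+1}}(V^{\alpha_{k-i},\beta_{i+1}}_{\beta'_{i+1}}\otimes\id)$ and the second to $V^{\alpha_{k-i},v(\beta'_{i+1})}_{\beta''_{i+1}}(\id\otimes V^{\beta_{i+1},\gamma_{j+1}}_{v(\beta'_{i+1})})$. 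Since $W$ is isometric, factoring it out on the left and absorbing the scalar phases it produces into the free parameter $z\in\mathbb{T}$ gives that the infimum in \eqref{Eqn=AlmostTwo} equals the $\bG_s$-level infimum of the difference of these two middle compositions. In Case 2 this is then controlled by \eqref{Eqn=AlmostTwo}/\eqref{Eqn=AlmostTwoVanish} on $\bG_s$ and converted to a $\qdim(\beta)^{-1}$ bound exactly as above; in Cases 1 and 3 the two framed compositions coincide up to a single phase, so the infimum is $0$. The genuine work lies entirely in this factorisation: rigorously splitting the free product morphisms as common outer frame $\boxtimes$ middle and checking that the frames on the two sides agree as operators (a coherence/associativity statement for the fusion morphisms), after which both properties reduce to the hypotheses on $\bG_1$ and $\bG_2$ and the routine bookkeeping above.
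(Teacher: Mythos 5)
Your proposal follows essentially the same route as the paper: the same three-case division, the Leibniz-rule telescoping that reduces the eigenvalue identity to the single middle letter $\beta_{i+1}$, the conversion $\qdim(\beta_{i+1})^{-1}\leq \qdim(\alpha)\qdim(\gamma)\qdim(\beta)^{-1}$ via multiplicativity of the quantum dimension, and the factorisation of the free-product intertwiners as a common isometric outer frame composed (via $\boxtimes$) with a single $\bG_s$-block, so that the infimum over phases reduces to the corresponding infimum on $\bG_s$. The ``genuine work'' you flag at the end is exactly what the paper carries out with the four explicit intertwiner identities and the $\boxtimes$ computation in Case 2, and with the analogous exact identities in Cases 1 and 3.

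One small slip: you assert that in Cases 1 and 3 the representation $\beta$ lies outside $A$, so that only the vanishing statements are needed. That is true in Case 1 (as Remark \ref{Rmk=Exhaust} records) but not in Case 3, since membership in $A$ depends only on $\beta$ (which in Case 3 has the same form \eqref{Eqn=BetaExpansion} as in Case 2), while the Case 2/Case 3 distinction depends on the shape of the subrepresentation $\beta_2$. For a $\beta\in A$ with a Case 3 pair $(\beta_1,\beta_2)$, the estimates \eqref{Eqn=AlmostOne} and \eqref{Eqn=AlmostTwo} still hold trivially because the relevant quantities vanish exactly, but \eqref{Eqn=AlmostOneB} does not vanish and must be checked; it follows by the same bounded-sum argument you give in Case 2 (this is precisely what the paper does, deferring the Case 3 proof of \eqref{Eqn=AlmostOneB} to the Case 2 argument). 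With that correction the proposal is complete.
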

\begin{proof}
We treat the three cases described above separately. In Remark \ref{Rmk=Exhaust} we already noted that for $\beta \in \Irr(\bG)$ as in Case 1 we have $\beta \not \in  A  \cup A_{00}$. So in Case 1 we must prove \eqref{Eqn=AlmostOneVanish} and \eqref{Eqn=AlmostTwoVanish} only.

\vspace{0.3cm}

\noindent {\bf Proof of \eqref{Eqn=AlmostOneVanish} in Case 1.}
Take $\beta \in \Irr(\bG)$ as in Case 1 so that $\beta \not \in A \cup A_{00}$.  We recall from the discussion in Case 1 that any irreducible representation contained in $\alpha \otimes \beta \otimes \gamma$ can be written as
a reduced expression of the form $\beta' (\beta_{i+1} \ldots \beta_{j-1}) \beta'', i < j$ with $\beta' \subseteq \alpha \otimes \beta_{1} \ldots \beta_{i}$  and  $\beta''  \subseteq  \beta_j \ldots \beta_l \otimes \gamma$ irreducible. Further, we have one point sets
\[
L^{\alpha, \gamma}_{\beta,  \beta' (\beta_{i+1} \ldots \beta_{j-1}) \beta''} = \{ \beta' \beta_{i+1} \ldots \beta_l \}, \qquad
R^{\alpha, \gamma}_{\beta,  \beta' (\beta_{i+1} \ldots \beta_{j-1}) \beta''} = \{ \beta_1 \ldots \beta_{j-1} \beta''  \}
\]
 and the $v$-bijection maps the one set to the other.
 We therefore conclude that \eqref{Eqn=AlmostOneVanish} equals
\[
\begin{split}
       & \Delta_\beta - \Delta_{\beta' \beta_{i+1} \ldots \beta_l} - \Delta_{\beta_1 \ldots \beta_{j-1} \beta''} + \Delta_{\beta' (\beta_{i+1} \ldots \beta_{j-1}) \beta''}  \\
 =  & \sum_{r=1}^l \Delta_{\beta_r}   - ( \Delta_{\beta' } + \sum_{r=i+1}^l \Delta_{\beta_r} )
 - ( \Delta_{\beta'' } + \sum_{r=1}^{j-1} \Delta_{\beta_r} ) +
  ( \Delta_{\beta'} + \Delta_{\beta'' } + \sum_{r = i+1}^{j-1} \Delta_{\beta_r} ) = 0.
  \\
\end{split}
\]

\vspace{0.3cm}

\begin{comment}
\noindent {\bf Proof of \eqref{Eqn=AlmostOneB} in Case 1.}
We continue the previous paragraph and we note that $\beta'$ must be of the form $\alpha_1 \ldots \alpha_{k-s} \beta'_s \beta_{s+1} \ldots \beta_i$ with $1 \not = \beta'_s \subseteq \alpha_{k-s+1} \beta_s'$ and $0 \leq s \leq i$. In that case $\beta_1 \ldots \beta_{s-1} = \overline{\alpha}_k \ldots \overline{\alpha}_{k-s+2}$. This gives
\[
\begin{split}
  \Delta_\beta  - \Delta_{\beta' \beta_{i+1} \ldots \beta_l}    = &    \left(  \sum_{r=1}^{l} \Delta_{\beta_r}  \right)  -  \left( \sum_{r=s+1}^{l} \Delta_{\beta_r} +  \Delta_{\beta_s'} + \sum_{r=1}^{k-s}  \Delta_{\alpha_r}  \right)  \\
    = &    \sum_{r=0}^{s-2} \Delta_{\overline{\alpha}_{k-r}}   +  \Delta_{\beta_s}   -       \Delta_{\beta_s'} - \sum_{r=1}^{k-s}  \Delta_{\alpha_r}.
 \end{split}
\]
We therefore estimate
\[
\vert \Delta_\beta  - \Delta_{\beta' \beta_{i+1} \ldots \beta_l}  \vert \leq
\vert \sum_{r=0}^{s-2} \Delta_{\overline{\alpha}_{k-r}}    - \sum_{r=1}^{k-s}  \Delta_{\alpha_r} \vert +  \vert \Delta_{\beta_s}   -       \Delta_{\beta_s'} \vert \leq
\vert \sum_{r=0}^{s-2} \Delta_{\overline{\alpha}_{k-r}}    - \sum_{r=1}^{k-s}  \Delta_{\alpha_r} \vert + C,
\]
for some  constant $C$ that only depends on $\alpha$ and $\gamma$ since both the QMS's on $\bG_1$ and $\bG_2$ are approximately linear.
This proves that \eqref{Eqn=AlmostOne} holds for $\beta$ as in Case 1.

\vspace{0.3cm}
\end{comment}

\noindent {\bf Proof of \eqref{Eqn=AlmostTwoVanish} in Case 1.} To prove \eqref{Eqn=AlmostTwoVanish} we notice that for a suitable choice of phase factors,
\[
\begin{split}
V^{\alpha, \beta}_{\beta' \beta_{i+1} \ldots \beta_l} = & V^{\alpha, \beta_1 \ldots \beta_i}_{\beta'} \otimes \id_{\beta_{i+1} \ldots \beta_l},
\qquad
V^{ \alpha,  \beta_{1} \ldots \beta_{j-1} \beta''}_{ \beta' \beta_{i+1} \ldots \beta_{j-1} \beta''} =  V^{ \alpha,  \beta_1 \ldots \beta_i}_{\beta'} \otimes  \id_{\beta_{i+1} \ldots \beta_{j-1} \beta''},
\\
 V^{ \beta, \gamma}_{ \beta_{1} \ldots \beta_{j-1} \beta''} = & \id_{\beta_{1} \ldots \beta_{j-1}} \otimes  V^{ \beta_j \ldots \beta_l, \gamma}_{\beta''},
  \qquad
V^{\beta' \beta_{i+1} \ldots \beta_l, \gamma}_{\beta' \beta_{i+1} \ldots \beta_{j-1} \beta''} =
\id_{\beta' \beta_{i+1} \ldots \beta_{j-1}}  \otimes V^{\beta_j \ldots \beta_l, \gamma}_{\beta''}.
\end{split}
\]
By using these identities in the first and last equation we find the following. The second equation is elementary since the intertwiners commute as they act on different tensor legs. So we get
\[
\begin{split}
V^{\beta' \beta_{i+1} \ldots \beta_l, \gamma}_{\beta' \beta_{i+1} \ldots \beta_{j-1} \beta''} \circ (V^{\alpha, \beta}_{\beta' \beta_{i+1} \ldots \beta_l} \otimes \id_\gamma)
=&
(\id_{\beta' \beta_{i+1} \ldots \beta_{j-1}}  \otimes V^{\beta_j \ldots \beta_l, \gamma}_{\beta''}) \circ (V^{\alpha, \beta_1 \ldots \beta_i}_{\beta'} \otimes \id_{\beta_{i+1} \ldots \beta_l} \otimes \id_\gamma)\\
= &
 (V^{\alpha, \beta_1 \ldots \beta_i}_{\beta'} \otimes \id_{\beta_{i+1} \ldots \beta_{j-1} \beta''} ) \circ (\id_\alpha \otimes \id_{  \beta_{1} \ldots \beta_{j-1}}  \otimes V^{\beta_j \ldots \beta_l, \gamma}_{\beta''}) \\
 = &
V^{ \alpha,  \beta_{1} \ldots \beta_{j-1} \beta''}_{ \beta' \beta_{i+1} \ldots \beta_{j-1} \beta''} \circ (\id_\alpha \otimes V^{ \beta, \gamma}_{ \beta_{1} \ldots \beta_{j-1} \beta''}).
\end{split}
\]
This proves that \eqref{Eqn=AlmostTwoVanish} is true for $\beta$ as in Case 1.

\vspace{0.3cm}

\noindent {\bf Proof of \eqref{Eqn=AlmostOne} and \eqref{Eqn=AlmostOneVanish} in Case 2.}
 Now let $\beta \in \Irr(\bG)$ and assume that we are in Case 2. So $ \beta \not \in   A_{00}$.  Take $\beta'' \subseteq \alpha \otimes \beta \otimes \gamma$ which in Case 2 is assumed to be of the form of a reduced expression $\alpha_1 \ldots  \alpha_{k-i-1} \beta_{i+1}'' \gamma_{j+2} \ldots \gamma_{m}$ where $\alpha_{k-i}, \beta_{i+1}, \gamma_{j+1}$ are representations of the same quantum group and
 $\beta_{i+1}'' \subseteq \alpha_{k-i} \otimes \beta_{i+1} \otimes \gamma_{j+1}$ is irreducible, non-trivial and not contained in $A^s_{00}(\alpha_{k-i}, \gamma_{j+1})$. Take $\beta_{i+1}' \in L^{\alpha_{k-i}, \gamma_{j+1}}_{\beta_{i+1}, \beta_{i+1}''}$
 so that
 \[
 \beta' := \alpha_1 \ldots  \alpha_{k-i-1} \beta_{i+1}' \beta_{i+2} \ldots \beta_l  \in L^{\alpha, \gamma}_{\beta,   \alpha_1 \ldots  \alpha_{k-i-1} \beta_{i+1}'' \gamma_{j+2} \ldots \gamma_{m} }
  \]
  and the $v$-image of $\beta'$ is $\beta_1 \ldots \beta_{i} v(\beta_{i+1}') \gamma_{j+2} \ldots \gamma_{m}$. We have that
 \[
 \begin{split}
  \Delta_{\beta}
 - \Delta_{\beta'}
 - \Delta_{v(\beta')}
 + \Delta_{\beta''}
 = & (  \sum_{r=1}^{l} \Delta_{\beta_r}   )
 - (  \sum_{r=1}^{k-i-1} \Delta_{\alpha_r} + \Delta_{\beta_{i+1}'} + \sum_{r=i+2}^{l} \Delta_{\beta_r} ) \\
& \qquad - (  \sum_{r=1}^{i} \Delta_{\beta_r} + \Delta_{v(\beta_{i+1}')} + \sum_{r=j+2}^{m} \Delta_{\gamma_r} )
 + (  \sum_{r=1}^{k-i-1} \Delta_{\alpha_r} + \Delta_{\beta_{i+1}''} + \sum_{r=j+2}^{m} \Delta_{\gamma_r}  ) \\
 = & \Delta_{\beta_{i+1}}
 - \Delta_{\beta_{i+1}'}
 - \Delta_{v(\beta_{i+1}')}
 + \Delta_{\beta_{i+1}''}.
 \end{split}
 \]
 So since the QMS's on $\bG_1$ and $\bG_2$ are approximately linear we can conclude as follows. In case $\beta_{i+1} \in A^s(\alpha_{k-i}, \gamma_{j+1}), s=1,2$  we see that there is a constant $C>0$ only depending on $\alpha_{k-i}$ and $\gamma_{j+1}$ such that
 \[
 \begin{split}
  \vert \Delta_{\beta_{i+1}}
 - \Delta_{\beta'_{i+1}}
 - \Delta_{v(\beta'_{i+1})}
 + \Delta_{\beta''_{i+1}} \vert \leq & C  \qdim(\beta_{i+1})^{-1}
 \end{split}
 \]
 So by \eqref{Eqn=BetaExpansion} and multiplicativity of the quantum dimension,
 \[
 \begin{split}
  \vert \Delta_{\beta_{i+1}}
 - \Delta_{\beta'_{i+1}}
 - \Delta_{v(\beta'_{i+1})}
 + \Delta_{\beta''_{i+1}} \vert  \leq &
 C  ( \prod_{\substack{ r=1 \ }}^{i} \qdim(\alpha_r) )   ( \prod_{\substack{ r=i+2 \ }}^{m} \qdim(\gamma_r) )  \qdim(\beta)^{-1}.
 \end{split}
 \]
This concludes \eqref{Eqn=AlmostOne}.
 In case  $\beta_{i+1} \not \in A^s(\alpha_{k-i}, \gamma_{j+1}), s=1,2$, and as we assumed that also   $\beta_{i+1} \not \in A^s_{00}(\alpha_{k-i}, \gamma_{j+1}), s=1,2$, we find
 \[
 \vert \Delta_{\beta_{i+1}}
 - \Delta_{\beta'_{i+1}}
 - \Delta_{v(\beta'_{i+1})}
 + \Delta_{\beta''_{i+1}} \vert = 0,
 \]
 and we conclude \eqref{Eqn=AlmostOneVanish}.
% This proves Property \eqref{Eqn=Item=One}. The proof of \eqref{Eqn=AlmostOneB} is close to the method in Case 1 and we shall prove it now.

\vspace{0.3cm}

\noindent {\bf Proof of \eqref{Eqn=AlmostOneB}   in Case 2.}
We stay in the setting of the previous subproof and assume that $\beta_{i+1} \in A^s(\alpha_{k-i}, \gamma_{j+1}), s=1,2$.  Recall that in Case 2 we have that $\beta'$ must be of the form $\alpha_1 \ldots \alpha_{k-i-1} \beta'_{i+1} \beta_{i+2} \ldots \beta_l$ with $1 \not = \beta'_{i+1} \in L^{\alpha_{k-i}, \gamma_{j+1}}_{\beta_{i+1},  \beta_{i+1}'' }$. In that case $\beta_1 \ldots \beta_{i} = \overline{\alpha}_k \ldots \overline{\alpha}_{k-i+1}$. This gives
\[
\begin{split}
\Delta_\beta - \Delta_{\beta'}     = &    (  \sum_{r=1}^{l} \Delta_{\beta_r}  )  -  ( \sum_{r=i+2}^{l} \Delta_{\beta_r} +  \Delta_{\beta_{i+1}' } + \sum_{r=1}^{k-i+1}  \Delta_{\alpha_r}  )  \\
    = &    \sum_{r=0}^{i-1} \Delta_{\overline{\alpha}_{k-r}}   +  \Delta_{\beta_{i+1}}   -       \Delta_{\beta_{i+1}'} - \sum_{r=1}^{k-i+1}  \Delta_{\alpha_r}.
 \end{split}
\]
We therefore estimate
\[
\vert \Delta_\beta - \Delta_{\beta'} \vert \leq
\vert \sum_{r=0}^{i-1} \Delta_{\overline{\alpha}_{k-r}}    - \sum_{r=1}^{k-i+1}  \Delta_{\alpha_r} \vert +  \vert \Delta_{\beta_{i+1}}   -       \Delta_{\beta_{i+1}'} \vert \leq
\vert \sum_{r=0}^{i-1} \Delta_{\overline{\alpha}_{k-r}}    - \sum_{r=1}^{k-i+1}  \Delta_{\alpha_r} \vert  + C,
\]
for some  constant $C$ that only depends on $\alpha$ and $\gamma$ since both the QMS's on $\bG_1$ and $\bG_2$ are approximately linear.
This proves that \eqref{Eqn=AlmostOneB}  holds for $\beta \in A$ as in Case 2.

\vspace{0.3cm}

\noindent {\bf Proof of \eqref{Eqn=AlmostTwo} and \eqref{Eqn=AlmostTwoVanish} in Case 2.}
  To prove \eqref{Eqn=AlmostTwo} for Case 2 note that up to a phase factor
\[
\begin{split}
V^{\alpha, \beta}_{ \alpha_1 \ldots  \alpha_{k-i-1} \beta_{i+1}' \beta_{i+2} \ldots \beta_l } = & V^{\alpha, \beta_1 \ldots \beta_{i+1}}_{\alpha_1 \ldots  \alpha_{k-i-1} \beta_{i+1}'} \otimes \id_{\beta_{i+2} \ldots \beta_l}, \\
V^{ \alpha,  \beta_1 \ldots \beta_{i} v(\beta_{i+1}') \gamma_{j+2} \ldots \gamma_{m}}_{ \alpha_1 \ldots  \alpha_{k-i-1} \beta_{i+1}'' \gamma_{j+2} \ldots \gamma_{m} } = & V^{ \alpha,  \beta_1 \ldots \beta_{i} v(\beta_{i}')}_{  \alpha_1 \ldots  \alpha_{k-i-1} \beta_{i+1}''  }  \otimes  \id_{ \gamma_{j+2} \ldots \gamma_{m} },
\\
 V^{ \beta, \gamma}_{\beta_1 \ldots \beta_{i} v(\beta_{i+1}') \gamma_{j+2} \ldots \gamma_{m} } = & \id_{\beta_{1} \ldots \beta_{i}} \otimes  V^{ \beta_{i+1} \ldots \beta_l, \gamma}_{ v(\beta_{i+1}') \gamma_{j+2} \ldots \gamma_{m}},\\
V^{\alpha_1 \ldots  \alpha_{k-i-1} \beta_{i+1}' \beta_{i+2} \ldots \beta_l , \gamma}_{ \alpha_1 \ldots  \alpha_{k-i-1} \beta_{i+1}'' \gamma_{j+2} \ldots \gamma_{m}  } = &
\id_{\alpha_1 \ldots  \alpha_{k-i-1} }  \otimes V^{\beta_{i+1}' \beta_{i+2} \ldots \beta_l , \gamma}_{   \beta_{i+1}'' \gamma_{j+2} \ldots \gamma_{m}  }.
\end{split}
\]
Write $x \approx_D y$ for $\Vert x - y \Vert \leq D$. Let $D = C  \qdim(\beta_{i+1})^{-1}$ if $\beta_{i+1}'  \in A^s( \alpha_{k-i}  ,  \gamma_{j+1}  )$ and let $D=0$ otherwise.
We find since $\bG_1$ and $\bG_2$ have almost commuting intertwiners that
\[
\begin{split}
&   V^{\alpha_1 \ldots  \alpha_{k-i-1} \beta_{i+1}' \beta_{i+2} \ldots \beta_l , \gamma}_{ \alpha_1 \ldots  \alpha_{k-i-1} \beta_{i+1}'' \gamma_{j+2} \ldots \gamma_{m}  }  \circ ( V^{\alpha, \beta}_{ \alpha_1 \ldots  \alpha_{k-i-1} \beta_{i+1}' \beta_{i+2} \ldots \beta_l } \otimes  \id_\gamma )\\
 = &
(\id_{\alpha_1 \ldots  \alpha_{k-i-1} }  \otimes V^{\beta_{i+1}' \beta_{i+2} \ldots \beta_l , \gamma}_{   \beta_{i+1}'' \gamma_{j+2} \ldots \gamma_{m}  } )
\circ
(V^{\alpha, \beta_1 \ldots \beta_{i+1}}_{\alpha_1 \ldots  \alpha_{k-i-1} \beta_{i+1}'} \otimes \id_{\beta_{i+2} \ldots \beta_l} \otimes \id_\gamma) \\
= &
 (\id_{\alpha_1 \ldots  \alpha_{k-i-1} }  \otimes V^{\beta_{i+1}'  , \gamma_{j+1}}_{   \beta_{i+1}''   }  \boxtimes V_1^{\beta_{i+2} \ldots \beta_l, \gamma_1 \ldots \gamma_{j}}
 \otimes \id_{\gamma_{j+2} \ldots \gamma_m}
  )\\
  & \qquad
\circ
(\id_{\alpha_1 \ldots  \alpha_{k-i -1} }  \otimes  V^{\alpha_{k-i}, \beta_{i+1}}_{   \beta_{i+1}'  }  \boxtimes V^{  \alpha_{k-i+1} \ldots \alpha_k , \beta_1 \ldots \beta_{i} }_1
  \otimes \id_{\beta_{i+2} \ldots \beta_l} \otimes \id_\gamma) \\
 \approx_{ D }
&
(\id_{\alpha_1 \ldots  \alpha_{k-i -1} } \otimes V^{\alpha_{k-i}, v(\beta_{i+1}')}_{   \beta_{i+1}''  }    \boxtimes V^{  \alpha_{k-i+1} \ldots \alpha_k , \beta_1 \ldots \beta_{i} }_1
   \otimes \id_{\gamma_{j+2} \ldots \gamma_m}  )
\\
& \qquad \circ
 (\id_{\alpha } \otimes \id_{\beta_1 \ldots \beta_{i}} \otimes V^{\beta_{i+1}  , \gamma_{j+1}}_{  v( \beta_{i+1}' )  }  \boxtimes V_1^{\beta_{i+2} \ldots \beta_l, \gamma_1 \ldots \gamma_{j}} \otimes \id_{\gamma_{j+2} \ldots \gamma_m} )
 \\
  = &
(V^{ \alpha,  \beta_1 \ldots \beta_{i} v(\beta_{i+1}')}_{  \alpha_1 \ldots  \alpha_{k-i-1} \beta_{i+1}''  }  \otimes  \id_{ \gamma_{j+2} \ldots \gamma_{m} })
\circ
 (\id_\alpha \otimes \id_{\beta_{1} \ldots \beta_{i}} \otimes  V^{ \beta_{i+1} \ldots \beta_l, \gamma}_{ v(\beta_{i+1}') \gamma_{j+2} \ldots \gamma_{m}})
\\
= &  V^{ \alpha,  \beta_1 \ldots \beta_{i} v(\beta_{i+1}') \gamma_{j+2} \ldots \gamma_{m}}_{ \alpha_1 \ldots  \alpha_{k-i-1} \beta_{i+1}'' \gamma_{j+2} \ldots \gamma_{m} }  \circ (\id_{\alpha }   \otimes  V^{ \beta, \gamma}_{\beta_1 \ldots \beta_{i} v(\beta_{i+1}') \gamma_{j+2} \ldots \gamma_{m} }).
\end{split}
\]
 So that \eqref{Eqn=AlmostTwo} and \eqref{Eqn=AlmostTwoVanish} hold in Case 2.

\vspace{0.3cm}

\noindent {\bf Proof of \eqref{Eqn=AlmostOne} and \eqref{Eqn=AlmostOneVanish} in Case 3.} We turn to Case 3. We shall write
\[
\beta = \beta_1 \ldots \beta_l =  \overline{\alpha}_k \ldots \overline{\alpha}_{k-i+1} \beta_{i+1} \overline{\gamma}_{j}  \ldots \overline{\gamma}_{1}.
\]
This case is essentially the same as Case 1 for $i+1 = j$ (so that the terms $\beta_{i+1} \ldots \beta_{j-1}$ in the proof of Case 1 vanish). Nevertheless we provide full details here.

Consider the subrepresentation of $\alpha \otimes \beta \otimes \gamma$ given by the reduced word $\beta' \beta''$ where $\beta'  \subseteq  \alpha  \otimes (\overline{\alpha}_k \ldots \overline{\alpha}_{k-i+1} \beta_{i+1})$ and $\beta'' \subseteq  (\overline{\gamma}_{j}  \ldots \overline{\gamma}_{1}) \otimes \gamma$ (the case where $\beta'  \subseteq \alpha \otimes ( \overline{\alpha}_k \ldots \overline{\alpha}_{k-i+1})$ and $\beta'' \subseteq (\beta_{i+1}   \overline{\gamma}_{j}  \ldots \overline{\gamma}_{1}) \otimes \gamma$ can be treated in the same manner, or by taking adjoints). We recall that
\[
\begin{split}
L^{\alpha, \gamma}_{\beta,  \beta'  \beta'' } = & \left\{   \beta'  \overline{\gamma}_{j}  \ldots \overline{\gamma}_{1} \right\},  \qquad
R^{\alpha, \gamma}_{\beta,   \beta' \beta''  } =   \left\{   \overline{\alpha}_k \ldots \overline{\alpha}_{k-i+1} \beta_{i+1} \beta''      \right\}. \\
 \end{split}
\]
We write short-hand $v$ for the bijection between these singleton sets. We now find that
\[
\begin{split}
& \Delta_{\overline{\alpha}_k \ldots \overline{\alpha}_{k-i+1} \beta_{i+1}  \overline{\gamma}_{j}  \ldots \overline{\gamma}_{1}   }
- \Delta_{ \beta'  \overline{\gamma}_{j}  \ldots \overline{\gamma}_{1}}
-  \Delta_{\overline{\alpha}_k \ldots \overline{\alpha}_{k-i+1} \beta_{i+1} \beta'' }
+ \Delta_{\beta' \beta''} \\
= & (\Delta_{\beta_{i+1}} +    \sum_{r= k-i+1}^k \Delta_{\overline{\alpha}_{r}}   + \sum_{r=1}^j \Delta_{\overline{\gamma}_r}  )
 - (  \Delta_{\beta'}    +    \sum_{r=1}^j \Delta_{\overline{\gamma}_r}  ) \\
&- (  \Delta_{\beta_{i+1}} + \Delta_{\beta''}   +  \sum_{r= k-i+1}^k \Delta_{\overline{\alpha}_{r}}  )
  +  ( \Delta_{\beta'} + \Delta_{\beta'' }  ) = 0.
\end{split}
\]
This  proves \eqref{Eqn=AlmostOneVanish} and certainly \eqref{Eqn=AlmostOne}; in fact the expression  always is 0.

\vspace{0.3cm}

\noindent {\bf Proof of \eqref{Eqn=AlmostOneB} in Case 3.} \eqref{Eqn=AlmostOneB} can be proved as in Case 2   and we omit the details here.

\vspace{0.3cm}

\noindent {\bf Proof of \eqref{Eqn=AlmostTwo} and \eqref{Eqn=AlmostTwoVanish} in Case 3.} For suitable phase factors for the intertwiners we have
\[
\begin{split}
V^{\alpha, \beta}_{\beta' \beta_{i+2} \ldots \beta_l} = & V^{\alpha, \beta_1 \ldots \beta_{i+1}}_{\beta'} \otimes \id_{\beta_{i+2} \ldots \beta_l},
\qquad
V^{ \alpha,  \beta_{1} \ldots \beta_{i+1} \beta''}_{ \beta'   \beta''} =  V^{ \alpha,  \beta_1 \ldots \beta_{i+1}}_{\beta'} \otimes  \id_{  \beta''},
\\
 V^{ \beta, \gamma}_{ \beta_{1} \ldots \beta_{i+1} \beta''} = & \id_{\beta_{1} \ldots \beta_{i+1}} \otimes  V^{ \beta_{i+2} \ldots \beta_l, \gamma}_{\beta''},
  \qquad
V^{\beta' \beta_{i+2} \ldots \beta_l, \gamma}_{\beta'  \beta''} =
\id_{\beta'  }  \otimes V^{\beta_{i+2} \ldots \beta_l, \gamma}_{\beta''}.
\end{split}
\]
By using these identities in the first and last equation we find the following. The second equation is elementary since the intertwiners commute as they act on different tensor legs. So we get
\[
\begin{split}
V^{\beta' \beta_{i+2} \ldots \beta_l, \gamma}_{\beta' \beta''} \circ (V^{\alpha, \beta}_{\beta' \beta_{i+2} \ldots \beta_l} \otimes \id_\gamma)
=&
(\id_{\beta'  }  \otimes V^{\beta_{i+2} \ldots \beta_l, \gamma}_{\beta''}) \circ (V^{\alpha, \beta_1 \ldots \beta_{i+1}}_{\beta'} \otimes \id_{\beta_{i+2} \ldots \beta_l} \otimes \id_\gamma)\\
= &
 (V^{\alpha, \beta_1 \ldots \beta_{i+1}}_{\beta'} \otimes \id_{  \beta''} ) \circ (\id_\alpha \otimes \id_{  \beta_{1} \ldots \beta_{i+1}}  \otimes V^{\beta_{i+2} \ldots \beta_l, \gamma}_{\beta''}) \\
 = &
V^{ \alpha,  \beta_{1} \ldots \beta_{i+1} \beta''}_{ \beta'  \beta''} \circ (\id_\alpha \otimes V^{ \beta, \gamma}_{ \beta_{1} \ldots \beta_{i+1} \beta''}).
\end{split}
\]
This proves that  \eqref{Eqn=AlmostTwoVanish} and certainly \eqref{Eqn=AlmostTwo} are true in Case 3.
\end{proof}

\section{Approximate linearity with almost commuting intertwiners implies immediately gradient-$\cS_2$}\label{Sect=Solid}
One of the main tools introduced in \cite{CaspersGradient} is the notion for a QMS being immediately gradient Hilbert-Schmidt or immediately gradient-$\cS_2$ where $\cS_2$ refers to the Schatten-von Neumann non-commutative $L_2$-space. The aim of this section is to show that if a QMS is approximately linear with almost commuting intertwiners then it is immediately gradient-$\cS_2$. The immediately gradient-$\cS_2$ property together with some additional assumptions imply rigidity results von Neumann algebras.  The proofs of the latter facts were given in \cite{CaspersGradient} and \cite{CIW} and shall not be repeated here.

We note in the following definition that since $\Phi$ is a QMS of central multipliers that the $\ast$-algebra $\Pol(\bG)$ is in the domain of the generator $\Delta$.

\begin{dfn}
Let $\Phi = (\exp(-t \Delta))_{t \geq 0}$ be a QMS of central multipliers on a compact quantum group $\bG$.    $\Phi$ is called immediately gradient-$\cS_2$ if for every $a, c \in \Pol(\bG)$ the map
\[
\Psi^{a,c}_t: x \Omega_\varphi \mapsto \exp(-t\Delta)(\Delta(axc)  - \Delta(ax)c - a \Delta(xc) + a \Delta(x) c) \Omega_\varphi, \qquad x \in \Pol(\bG),
\]
is bounded $L_2(\bG) \rightarrow L_2(\bG)$ for $t \geq 0$ and moreover Hilbert-Schmidt for $t >0$.
\end{dfn}

We first need the following estimate for the isotypical projections.

\begin{prop}\label{Prop=AuxilliaryEstimate}
Suppose that $\Phi$   is approximately linear with almost commuting intertwiners. Let $a, c \in \Pol(\bG)$ be matrix coeffiecients of respectively $\alpha, \gamma \in \Irr(\bG)$. Let $A_{00} = A_{00}(\alpha, \gamma)$ and the $v$-map  be as in Definition \ref{Dfn=Almost}.  There exists a constant $C = C(a,c) >0$ such that for all $\beta \in \Irr(\bG) \backslash A_{00}, (\beta_1, \beta_2) \in L^{\alpha, \gamma}_\beta$ and every matrix coefficient $x$ of $\beta$ we have
 \[
 \Vert P_{\beta_2} (   P_{\beta_1}(a x) c) - P_{\beta_2} ( a P_{v(\beta_1; \beta, \beta_2)}( xc ))  \Vert_2 \leq   C \qdim(\beta)^{-1} \delta(\beta \in A)  \Vert x \Vert_2,
 \]
 where $v$ and $A$ are as in definition \ref{Dfn=Almost}.
\end{prop}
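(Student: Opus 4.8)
The plan is to convert every product and isotypical projection in the statement into an explicit matrix coefficient, so that the two sides of the inequality become matrix coefficients of $\beta_2$ built from exactly the two intertwiner compositions compared in \eqref{Eqn=AlmostTwo}. Write $u^\alpha_{\xi,\eta} := (\id \otimes \omega_{\xi,\eta})(u^\alpha)$ and take $a = u^\alpha_{s,t}$, $c = u^\gamma_{p,q}$, $x = u^\beta_{\xi,\eta}$ (general matrix coefficients follow by linearity, the extra factors being absorbed into $C(a,c)$ and $\Vert x\Vert_2$). Since $\bG$ is multiplicity free, the coisometries $V^{\alpha,\beta}_{\beta_1}\colon H_\alpha \otimes H_\beta \to H_{\beta_1}$ satisfy $\sum_{\beta_1}(V^{\alpha,\beta}_{\beta_1})^\ast V^{\alpha,\beta}_{\beta_1} = \id$, and a direct computation with $\omega_{\xi,\eta}(V^\ast m V)=\omega_{V\xi,V\eta}(m)$ gives the key identity $P_{\beta_1}(u^{\alpha \otimes \beta}_{\zeta,\theta}) = u^{\beta_1}_{V^{\alpha,\beta}_{\beta_1}\zeta,\, V^{\alpha,\beta}_{\beta_1}\theta}$. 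Iterating this with the product rule $ax = u^{\alpha\otimes\beta}_{s\otimes\xi,\,t\otimes\eta}$, and writing $\beta_1' := v(\beta_1;\beta,\beta_2)$, both sides become
\[
P_{\beta_2}(P_{\beta_1}(ax)c) = u^{\beta_2}_{T_L\zeta,\,T_L\theta}, \qquad P_{\beta_2}(aP_{\beta_1'}(xc)) = u^{\beta_2}_{T_R\zeta,\,T_R\theta},
\]
where $\zeta = s\otimes\xi\otimes p$, $\theta = t\otimes\eta\otimes q$, and $T_L = V^{\beta_1,\gamma}_{\beta_2}(V^{\alpha,\beta}_{\beta_1}\otimes\id_\gamma)$, $T_R = V^{\alpha,\beta_1'}_{\beta_2}(\id_\alpha\otimes V^{\beta,\gamma}_{\beta_1'})$ are precisely the operators appearing in \eqref{Eqn=AlmostTwo}.

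The phase ambiguity is handled as follows. Because $u^{\beta_2}_{w,w'}$ is linear in $w$ and conjugate-linear in $w'$, the coefficient $u^{\beta_2}_{T_R\zeta,T_R\theta}$ is invariant under $T_R \mapsto zT_R$ for $z\in\mathbb{T}$ (the factor $z\bar z = 1$), which is consistent with the fact that the projections $P_{\beta_2}, P_{\beta_1'}$ are intrinsic and independent of the phase of the intertwiners. I may therefore replace $T_R$ by $zT_R$ with $z$ realizing the infimum in \eqref{Eqn=AlmostTwo}, set $D := T_L - zT_R$, and expand bilinearly:
\[
u^{\beta_2}_{T_L\zeta,T_L\theta} - u^{\beta_2}_{zT_R\zeta,zT_R\theta} = u^{\beta_2}_{D\zeta,\,zT_R\theta} + u^{\beta_2}_{zT_R\zeta,\,D\theta} + u^{\beta_2}_{D\zeta,\,D\theta}.
\]
Thus the difference is governed by $\Vert D\Vert = \inf_{z\in\mathbb{T}}\Vert T_L - zT_R\Vert$, which is $\leq C\qdim(\beta)^{-1}$ for $\beta\in A$ by \eqref{Eqn=AlmostTwo} and equals $0$ for $\beta\in\Irr(\bG)\backslash(A\cup A_{00})$ by \eqref{Eqn=AlmostTwoVanish}; both are recorded by $\delta(\beta\in A)\qdim(\beta)^{-1}$.

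It remains to pass from the operator-norm bound on $D$ to the $L_2$-bound. By the Peter--Weyl relations \eqref{Eqn=PeterWeyl}, $\Vert u^{\beta_2}_{w,w'}\Vert_2 = \qdim(\beta_2)^{-1/2}\Vert Q_{\beta_2}^{1/2}w\Vert\,\Vert w'\Vert$ (with $Q_{\beta_2}=\id$ in the Kac case). Since $T_R$ is a composition of coisometries, $\Vert zT_R\theta\Vert\leq\Vert\theta\Vert = \Vert t\Vert\Vert\eta\Vert\Vert q\Vert$, while $\Vert D\zeta\Vert\leq\Vert D\Vert\,\Vert s\Vert\Vert\xi\Vert\Vert p\Vert$, so each term above is bounded by a constant depending only on $a,c$ times $\qdim(\beta_2)^{-1/2}\Vert D\Vert\,\Vert\xi\Vert\Vert\eta\Vert$ (the $D,D$-term being smaller still). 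Using $\Vert\xi\Vert\Vert\eta\Vert = \qdim(\beta)^{1/2}\Vert x\Vert_2$ and the dimension estimate $\qdim(\beta_2)\geq\qdim(\beta)\qdim(\alpha)^{-1}\qdim(\gamma)^{-1}$ — which follows from $\beta\subseteq\overline\alpha\otimes\beta_2\otimes\overline\gamma$ by Frobenius duality (Proposition \ref{Lem=Frobenious}) — the prefactor satisfies $\qdim(\beta_2)^{-1/2}\qdim(\beta)^{1/2}\leq\qdim(\alpha)^{1/2}\qdim(\gamma)^{1/2}$, and combining with $\Vert D\Vert\leq C\qdim(\beta)^{-1}\delta(\beta\in A)$ yields the claimed bound. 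The main obstacle is exactly this quantum-dimension bookkeeping: one must verify that the two factors $\qdim(\beta_2)^{-1/2}$ and $\qdim(\beta)^{1/2}$ fuse, through the Frobenius inequality $\qdim(\beta)\leq\qdim(\alpha)\qdim(\beta_2)\qdim(\gamma)$, into a constant depending only on $\alpha,\gamma$ rather than something growing in $\beta$, leaving a single net factor $\qdim(\beta)^{-1}$. In the non-Kac $SU_q(2)$ setting one must additionally carry the weight $Q_{\beta_2}^{1/2}$, but $\beta_2\subseteq\alpha\otimes\beta\otimes\gamma$ keeps these operators controlled by the data of $\alpha,\gamma$ and the corresponding weights of $\beta$.
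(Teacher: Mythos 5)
Your proposal is correct and follows essentially the same route as the paper's proof: express both projected products as matrix coefficients of $u^{\beta_2}$ built from the two intertwiner compositions of \eqref{Eqn=AlmostTwo}, exploit the phase-invariance of $u^{\beta_2}_{zT\zeta,zT\theta}$ to insert the optimal $z$, bound the resulting difference terms by $\inf_z\Vert T_L - zT_R\Vert$ via Peter--Weyl, and close the quantum-dimension bookkeeping with the Frobenius bound $\qdim(\beta)\leq\qdim(\alpha)\qdim(\beta_2)\qdim(\gamma)$. The only (immaterial) deviations are your three-term bilinear expansion in place of the paper's two-term telescoping and a slightly more explicit accounting of the square roots; the handling of the weights $Q_{\beta_2}$ in the non-Kac case is no less detailed than the paper's own.
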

 \begin{proof}

In this proof we identify $L_\infty(\bG) \otimes M_s(\mathbb{C})$ with $M_s(L_\infty(\bG))$. For an element $X \in M_s(L_\infty(\bG))$ and vectors $\xi, \eta \in \mathbb{C}^s$ we thus have under this correspondence  $\langle X \xi, \eta \rangle = (\id \otimes \omega_{\xi, \eta})(X) \in L_\infty(\bG)$. We shall also write $m := (0, \ldots, 0, 1, 0, \ldots, 0)^t, 1 \leq m \leq s$ (1 at the $m$-th coordinate) for the orthonormal basis vectors in $\mathbb{C}^s$. Let $u^\alpha, u^\beta$ and $u^\gamma$ be some concrete representatives for $\alpha, \beta$ and $\gamma$.

Set $a = \langle u^\alpha i, j \rangle, c = \langle u^{\gamma} m, n \rangle$ and $x = \langle u^\beta k, l \rangle$ with $\beta \in \Irr(\bG) \backslash A_{00}$. By the Woronowicz quantum Peter-Weyl theorem \eqref{Eqn=PeterWeyl} we find
\begin{equation}\label{Eqn=PeterWeyl}
\Vert a \Vert_2^2 =  \qdim(\alpha)^{-1} \langle Q_\alpha^{\frac{1}{2}} i,  Q_\alpha^{\frac{1}{2}}  i \rangle, \quad \Vert x \Vert_2^2 = \qdim(\beta)^{-1} \langle Q_\beta^{\frac{1}{2}} k,  Q_\beta^{\frac{1}{2}}  k \rangle, \qquad  \Vert c \Vert_2^2 = \qdim(\gamma)^{-1} \langle Q_\gamma^{\frac{1}{2}} m,  Q_\gamma^{\frac{1}{2}} m \rangle.
\end{equation}
 We have
\[
  P_{\beta_2}( a P_{v(\beta_1;\beta_2)}(x c)) = \langle u^{ \beta_2 }  (V_{ \beta_2 }^{\alpha,  v(\beta_1; \beta_2) })^\ast  (1 \otimes V_{v(\beta_1; \beta_2)}^{ \beta, \gamma} )^\ast   i \otimes m \otimes k ,  (V_{ \beta_2 }^{\alpha,  v(\beta_1; \beta_2) })^\ast  (1 \otimes V_{v(\beta_1; \beta_2)}^{ \beta, \gamma} )^\ast j \otimes n \otimes l \rangle,
\]
and
\[
P_{\beta_2}(  P_{\beta_1}(a x) c) = \langle u^{\beta_2}( V_{ \beta_2 }^{  \beta_1 \otimes \gamma }  )^\ast   (  V_{\beta_1}^{ \alpha, \beta} \otimes 1 )^\ast   i \otimes m \otimes k , ( V_{ \beta_2 }^{  \beta_1 \otimes \gamma }  )^\ast   (  V_{\beta_1}^{ \alpha, \beta} \otimes 1 )^\ast j \otimes n \otimes l \rangle.
\]
For any $z \in \mathbb{T}$ we have
\begin{equation}\label{Eqn=BigEqn}
\begin{split}
 &  P_{\beta_2}( a P_{v(\beta_1;\beta_2)}(x c))  - P_{\beta_2}(  P_{\beta_1}(a x) c)  \\
=  &
\langle u^{\beta_2 } (( V_{ \beta_2 }^{\alpha,  v(\beta_1; \beta_2) })^\ast  (1 \otimes V_{v(\beta_1; \beta_2)}^{ \beta, \gamma} )^\ast
 -
 z  ( V_{ \beta_2 }^{  \beta_1 \otimes \gamma }  )^\ast   (  V_{\beta_1}^{ \alpha, \beta} \otimes 1 )^\ast)  i \otimes m \otimes k ,  ( V_{ \beta_2 }^{\alpha,  v(\beta_1; \beta_2) })^\ast  (1 \otimes V_{v(\beta_1; \beta_2)}^{ \beta, \gamma} )^\ast  j \otimes n \otimes l \rangle \\
& - \langle u^{  \beta_2  }  z   ( V_{ \beta_2 }^{  \beta_1 \otimes \gamma }  )^\ast   (  V_{\beta_1}^{ \alpha, \beta} \otimes 1 )^\ast   i \otimes m \otimes k,  (  z   ( V_{ \beta_2 }^{  \beta_1 \otimes \gamma }  )^\ast   (  V_{\beta_1}^{ \alpha, \beta} \otimes 1 )^\ast  - ( V_{ \beta_2 }^{\alpha,  v(\beta_1; \beta_2) })^\ast  (1 \otimes V_{v(\beta_1; \beta_2)}^{ \beta, \gamma} )^\ast ) j \otimes n \otimes l \rangle.
\end{split}
\end{equation}
We shall estimate the last two lines. The norm of the first of these lines can be expressed by Peter-Weyl \eqref{Eqn=PeterWeyl} as
\[
\begin{split}
&    \Vert \langle u^{\beta_2 } (( V_{ \beta_2 }^{\alpha,  v(\beta_1; \beta_2) })^\ast  (1 \otimes V_{v(\beta_1; \beta_2)}^{ \beta, \gamma} )^\ast
  -
 z  ( V_{ \beta_2 }^{  \beta_1 \otimes \gamma }  )^\ast   (  V_{\beta_1}^{ \alpha, \beta} \otimes 1 )^\ast ) i \otimes m \otimes k ,  ( V_{ \beta_2 }^{\alpha,  v(\beta_1; \beta_2) })^\ast  (1 \otimes V_{v(\beta_1; \beta_2)}^{ \beta, \gamma} )^\ast  j \otimes n \otimes l \rangle
 \Vert_2   \\
=  & \qdim(\beta_2)^{-1} \Vert Q_{\beta_2}^{\frac{1}{2}} (( V_{ \beta_2 }^{\alpha,  v(\beta_1; \beta_2) })^\ast  (1 \otimes V_{v(\beta_1; \beta_2)}^{ \beta, \gamma} )^\ast
 -
 z  ( V_{ \beta_2 }^{  \beta_1 \otimes \gamma }  )^\ast   (  V_{\beta_1}^{ \alpha, \beta} \otimes 1 )^\ast ) i \otimes m \otimes k \Vert_2 \\
   & \qquad \qquad  \times \qquad \Vert  ( V_{ \beta_2 }^{\alpha,  v(\beta_1; \beta_2) })^\ast  (1 \otimes V_{v(\beta_1; \beta_2)}^{ \beta, \gamma} )^\ast  j \otimes n \otimes l \Vert_2 \\
\leq  & \qdim(\beta_2)^{-1} \Vert  (( V_{ \beta_2 }^{\alpha,  v(\beta_1; \beta_2) })^\ast  (1 \otimes V_{v(\beta_1; \beta_2)}^{ \beta, \gamma} )^\ast
 -
 z  ( V_{ \beta_2 }^{  \beta_1 \otimes \gamma }  )^\ast   (  V_{\beta_1}^{ \alpha, \beta} \otimes 1 )^\ast )  Q_{\alpha}^{\frac{1}{2}} i \otimes Q_{\beta}^{\frac{1}{2}} m \otimes Q_{\gamma}^{\frac{1}{2}} k \Vert_2.
% \langle    ( (V_{\alpha +k +l}^{l \otimes (\alpha +k) })^\ast  (1 \otimes V_{\alpha +k}^{\alpha, k} )^\ast - z  ( V_{\alpha +k +l}^{  (\alpha + l) \otimes 1 }  )^\ast  (  V_{\alpha + l}^{l, \alpha} \otimes 1 )^\ast  ) Q_{\beta} i \otimes Q_{\alpha} m \otimes Q_{\gamma} k ,  (V_{\alpha +k +l}^{l \otimes (\alpha +k) })^\ast  (1 \otimes V_{\alpha +k}^{\alpha, k} )^\ast  j \otimes n \otimes l  \rangle. \\
\end{split}
\]
By Definition \ref{Dfn=Almost}  there exists a constant $C>0$ and $z_0 \in \mathbb{T}$ such that
\begin{equation}\label{Eqn=EstOne}
\begin{split}
      &   \Vert \langle u^{\beta_2 } (( V_{ \beta_2 }^{\alpha,  v(\beta_1; \beta_2) })^\ast  (1 \otimes V_{v(\beta_1; \beta_2)}^{ \beta, \gamma} )^\ast
 -
 z_0  ( V_{ \beta_2 }^{  \beta_1 \otimes \gamma }  )^\ast   (  V_{\beta_1}^{ \alpha, \beta} \otimes 1 )^\ast ) i \otimes m \otimes k ,  ( V_{ \beta_2 }^{\alpha,  v(\beta_1; \beta_2) })^\ast  (1 \otimes V_{v(\beta_1; \beta_2)}^{ \beta, \gamma} )^\ast  j \otimes n \otimes l \rangle
 \Vert_2    \\
\leq  &  C \qdim(\beta)^{-1}  \qdim(\beta_2)^{-1} \delta(\beta \in A)  \Vert Q_\alpha^{\frac{1}{2}} i \otimes Q_\beta^{\frac{1}{2}} m \otimes Q_\gamma^{\frac{1}{2}} k \Vert_2.
\end{split}
\end{equation}
Similarly, the second line in \eqref{Eqn=BigEqn} can be estimated with the same $z_0 \in \mathbb{T}$ as
\begin{equation}\label{Eqn=EstTwo}
\begin{split}
& \Vert  \langle u^{  \beta_2  }  z_0   ( V_{ \beta_2 }^{  \beta_1 \otimes \gamma }  )^\ast   (  V_{\beta_1}^{ \alpha, \beta} \otimes 1 )^\ast   i \otimes m \otimes k,  (  z_0   ( V_{ \beta_2 }^{  \beta_1 \otimes \gamma }  )^\ast   (  V_{\beta_1}^{ \alpha, \beta} \otimes 1 )^\ast  - ( V_{ \beta_2 }^{\alpha,  v(\beta_1; \beta_2) })^\ast  (1 \otimes V_{v(\beta_1; \beta_2)}^{ \beta, \gamma} )^\ast ) j \otimes n \otimes l \rangle \Vert_2 \\
\leq &  C \qdim(\beta)^{-1}  \qdim(\beta_2)^{-1} \delta(\beta \in A) \Vert Q_\alpha^{\frac{1}{2}} i \otimes Q_\beta^{\frac{1}{2}} m \otimes Q_\gamma^{\frac{1}{2}} k \Vert_2.
\end{split}
\end{equation}
Combining \eqref{Eqn=BigEqn}, \eqref{Eqn=EstOne} and \eqref{Eqn=EstTwo} we get
\[
\begin{split}
\Vert  P_{\beta_2}( a P_{v(\beta_1;\beta_2)}(x c))  - P_{\beta_2}(  P_{\beta_1}(a x) c)  \Vert_2 \leq & 2  C \qdim(\beta)^{-1}  \qdim(\beta_2)^{-1} \delta(\beta \in A) \Vert Q_\alpha^{\frac{1}{2}} i \otimes Q_\beta^{\frac{1}{2}} m \otimes Q_\gamma^{\frac{1}{2}} k \Vert_2.
\end{split}
\]
Then using \eqref{Eqn=PeterWeyl},
\[
\begin{split}
\Vert P_{\beta_2}( a P_{v(\beta_1;\beta_2)}(x c))  - P_{\beta_2}(  P_{\beta_1}(a x) c) \Vert_2 \leq   & 2 C \qdim(\beta)^{-1} \delta(\beta \in A) \frac{ \qdim(\alpha) \qdim(\beta) \qdim(\gamma)}{ \qdim(\beta_2)} \Vert a \Vert_2 \Vert x \Vert_2 \Vert c \Vert_2.
\end{split}
\]
This concludes the proof since the fraction $\frac{\qdim(\beta)}{ \qdim(\beta_2)}$ is bounded   for all pairs $\beta, \beta_2$ with $\beta_2 \subseteq \alpha \otimes \beta \otimes \gamma$.
 \end{proof}

 \begin{thm}\label{Thm=Main}
 Suppose that $\Phi$ is approximately linear with almost commuting intertwiners. Then $\Phi$ is immediately gradient-$\cS_2$.
 \end{thm}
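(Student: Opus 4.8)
The plan is to reduce everything to a single pointwise estimate on the isotypical components of the ``second difference''
\[
D(x) := \Delta(axc) - \Delta(ax)c - a\Delta(xc) + a\Delta(x)c,
\]
and then assemble boundedness and Hilbert--Schmidtness by a block/Schur-test argument combined with the growth bound \eqref{Eqn=AlmostThree}. Since $\Psi^{a,c}_t$ is bilinear in $(a,c)$ and $\Pol(\bG)$ is spanned by matrix coefficients, I may assume $a,c$ are matrix coefficients of irreducibles $\alpha,\gamma\in\Irr(\bG)$, and I test $\Psi^{a,c}_t$ on the orthonormal basis of $L_2(\bG)$ given by (normalised) matrix coefficients $x$ of the various $\beta\in\Irr(\bG)$. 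The finitely many $\beta\in A_{00}$ span finite-dimensional isotypical components and contribute a finite-rank (hence bounded and Hilbert--Schmidt) piece of $\Psi^{a,c}_t$, so I only need to treat $\beta\in\Irr(\bG)\backslash A_{00}$, for which the $v$-maps are defined.

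For such $\beta$ and $x$ a matrix coefficient of $\beta$, the key step is to compute $P_{\beta_2}(D(x))$ for $\beta_2\subseteq\alpha\otimes\beta\otimes\gamma$. Using that $\Delta$ is a central multiplier (so $P_{\beta_2}\Delta = \Delta_{\beta_2}P_{\beta_2}$), inserting the resolutions $P_{\beta_1}$ into $\Delta(ax)c$ and $P_{\tilde\beta_1}$ into $a\Delta(xc)$, and reindexing the second sum through the bijection $v(\,\cdot\,;\beta,\beta_2)\colon L^{\alpha,\gamma}_{\beta,\beta_2}\to R^{\alpha,\gamma}_{\beta,\beta_2}$, I obtain
\[
P_{\beta_2}(D(x)) = \sum_{\beta_1\in L^{\alpha,\gamma}_{\beta,\beta_2}}\Big[(\Delta_{\beta_2}-\Delta_{\beta_1})P_{\beta_2}(P_{\beta_1}(ax)c) + (\Delta_\beta-\Delta_{v(\beta_1)})P_{\beta_2}(aP_{v(\beta_1)}(xc))\Big].
\]
Writing $E_{\beta_1}:=\Delta_\beta-\Delta_{\beta_1}-\Delta_{v(\beta_1)}+\Delta_{\beta_2}$ and using $\Delta_{\beta_2}-\Delta_{\beta_1}=E_{\beta_1}-(\Delta_\beta-\Delta_{v(\beta_1)})$, this telescopes into
\[
P_{\beta_2}(D(x)) = \sum_{\beta_1\in L^{\alpha,\gamma}_{\beta,\beta_2}}\Big[E_{\beta_1}\,P_{\beta_2}(P_{\beta_1}(ax)c) + (\Delta_\beta-\Delta_{v(\beta_1)})\big(P_{\beta_2}(aP_{v(\beta_1)}(xc))-P_{\beta_2}(P_{\beta_1}(ax)c)\big)\Big].
\]
Now the two summands are controlled by the two halves of Definition \ref{Dfn=Almost}: $|E_{\beta_1}|\le C\qdim(\beta)^{-1}\delta(\beta\in A)$ by \eqref{Eqn=AlmostOne} and \eqref{Eqn=AlmostOneVanish}, while the bracketed difference in the second term has $L_2$-norm $\le C\qdim(\beta)^{-1}\delta(\beta\in A)\Vert x\Vert_2$ by Proposition \ref{Prop=AuxilliaryEstimate}. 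Bounding $\Vert P_{\beta_2}(P_{\beta_1}(ax)c)\Vert_2\le\Vert a\Vert_\infty\Vert c\Vert_\infty\Vert x\Vert_2$ and using that $\#L^{\alpha,\gamma}_{\beta,\beta_2}$ is uniformly bounded (Lemma \ref{Lem=RepBound}), this yields the decisive estimate $\Vert P_{\beta_2}(D(x))\Vert_2 \le C'\,\qdim(\beta)^{-1}\,\delta(\beta\in A)\,\Vert x\Vert_2$; in particular $P_{\beta_2}(D(x))=0$ whenever $\beta\notin A\cup A_{00}$.

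With this in hand the two conclusions follow by soft arguments. For boundedness ($t\ge0$): $\Psi^{a,c}_t$ vanishes on the isotypical components of all $\beta\notin A\cup A_{00}$, it sends the $\beta$-component into $\bigoplus_{\beta_2\subseteq\alpha\otimes\beta\otimes\gamma}$ (a uniformly bounded number of targets by Lemma \ref{Lem=RepBound}), and conversely each $\beta_2$ receives contributions from only boundedly many $\beta$ (by Frobenius duality, Lemma \ref{Lem=FrobConsequence1}); since each block $Q_{\beta_2}\Psi^{a,c}_tQ_\beta$ has norm $\le C'\qdim(\beta)^{-1}\le C'$, a block Schur test gives a uniform bound on $\Vert\Psi^{a,c}_t\Vert$. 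For the Hilbert--Schmidt norm ($t>0$): summing over the matrix-coefficient basis of each $\beta$ contributes a factor $n_\beta^2\le\qdim(\beta)^2$, so
\[
\Vert\Psi^{a,c}_t\Vert_{\mathrm{HS}}^2 \lesssim \sum_{\beta\in A} n_\beta^2\,\qdim(\beta)^{-2}\sum_{\beta_2\subseteq\alpha\otimes\beta\otimes\gamma}e^{-2t\Delta_{\beta_2}} \lesssim \sum_{\beta\in A}e^{-2t\Delta_\beta},
\]
using $n_\beta^2\qdim(\beta)^{-2}\le1$ and the comparability of $\Delta_{\beta_2}$ with $\Delta_\beta$. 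The last sum converges for $t>0$ because \eqref{Eqn=AlmostThree} forces $\#\{\beta\in A:\Delta_\beta<N\}$ to grow only polynomially.

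I expect the main obstacle to be the algebraic heart of the second paragraph: arranging the four terms of $D(x)$ into a form in which \emph{both} error mechanisms --- the near-additivity of eigenvalues \eqref{Eqn=AlmostOne} and the near-commutation of intertwiners (Proposition \ref{Prop=AuxilliaryEstimate}) --- appear multiplied by genuinely bounded factors, so that each contributes the same decay $\qdim(\beta)^{-1}$. In particular one must verify that the coefficient $\Delta_\beta-\Delta_{v(\beta_1)}$ multiplying the almost-commuting error is uniformly bounded on $A$; this is the right-hand analogue of \eqref{Eqn=AlmostOneB} (a one-step Lipschitz bound for tensoring by $\gamma$ on the right), which follows from \eqref{Eqn=AlmostOneB} together with the near-vanishing of $E_{\beta_1}$ and the left--right symmetry of Definition \ref{Dfn=Almost}. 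The same two-sided Lipschitz bound yields the comparability $|\Delta_{\beta_2}-\Delta_\beta|\le C$ used in the Hilbert--Schmidt estimate.
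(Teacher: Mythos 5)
Your proposal is correct and follows essentially the same route as the paper: decompose $\Psi^{a,c}_t(x)$ over isotypical components, reindex the right-hand sum through the $v$-bijection, telescope into a term carried by the small quantity \eqref{Eqn=AlmostOne}/\eqref{Eqn=AlmostOneVanish} and a term carried by the intertwiner estimate of Proposition \ref{Prop=AuxilliaryEstimate}, and then sum using Lemma \ref{Lem=RepBound} and the growth condition \eqref{Eqn=AlmostThree}. The only divergence is your choice of grouping, which places the coefficient $\Delta_\beta-\Delta_{v(\beta_1)}$ (rather than the paper's $\Delta_{\beta_1}-\Delta_\beta$) against the almost-commutation error; the one-step bound you need for it is equivalent, modulo \eqref{Eqn=AlmostOne} and \eqref{Eqn=AlmostOneB}, to the comparability of $\Delta_{\beta_2}$ with $\Delta_\beta$ that the paper itself invokes when estimating $\exp(-t\Delta_{\beta_2})\leq C'\exp(-t\Delta_\beta)$, so this is not a substantive departure.
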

 \begin{proof}
 We use the same notation as in the proof of Proposition \ref{Prop=AuxilliaryEstimate}.
 Let $a, c \in \Pol(\bG)$ be a matrix coefficient of respectively  $\alpha, \gamma \in \Irr(\bG)$. Say that $a = \langle \alpha \xi, \eta \rangle$ and $c = \langle \gamma \zeta, \psi \rangle$. Let $e_i^\beta, 1 \leq i \leq n_\beta$ be orthogonal vectors in $H_\beta$ such that $\langle \beta e_i^\beta, e_j^\beta \rangle$ is orthogonal in $L_2(\bG)$ \cite[Proposition 2.1]{DawsPrAMS}.
 We must show that for any $t > 0$,
 \begin{equation}\label{Eqn=MainEstimate}
 \sum_{\beta \in \Irr(\bG)} \sum_{i,j=1}^{n_\beta} \frac{ \Vert \Psi_t^{a,b}( \langle \beta e_i, e_j \rangle ) \Vert_2  }{ \Vert  \langle \beta e_i, e_j \rangle \Vert_2  } < \infty.
 \end{equation}
 Let $x = \langle \beta e_i, e_j \rangle$ for some fixed $\beta \in \Irr(\bG), 1 \leq i \leq n_\beta$.
 We start examining the term
 \[
 \begin{split}
 \Psi^{a,b}_0(x) = &
 \Delta( a x c ) -
  \Delta( ax ) c    -  a \Delta( xc )
 +  a \Delta( x) c \\
= &
  \sum_{(\beta_1, \beta_2) \in L_\beta^{\alpha, \gamma}} \left(
\Delta_{\beta_2} P_{\beta_2} (   P_{\beta_1}(a x) c)  -
  \Delta_{\beta_1}  P_{\beta_2}( P_{\beta_1}( ax ) c)  \right)  \\
  &   +
  \sum_{(\beta_1', \beta_2) \in R_\beta^{\alpha, \gamma}} \left( -
  \Delta_{\beta_1'}
  P_{\beta_2} ( a P_{\beta_1'}( xc ))
 +  \Delta_\beta P_{\beta_2} ( a P_{\beta_1'}( x  c ) ) \right).
 \end{split}
 \]
Now if $\beta \not \in A_{00}$ then  we may write this expression as
\[
 \begin{split}
 \Psi^{a,b}_0(x) = &
 \Delta( a x c ) -
  \Delta( ax ) c    -  a \Delta( xc )
 +  a \Delta( x) c \\
= &
 \sum_{(\beta_1, \beta_2) \in L_\beta^{\alpha, \gamma}}
\bigg( \Delta_{\beta_2} P_{\beta_2} (   P_{\beta_1}(a x) c)  -
  \Delta_{\beta_1}  P_{\beta_2}( P_{\beta_1}( ax ) c)    \\
  &   -
  \Delta_{ v(\beta_1; \beta_2) }
  P_{\beta_2} ( a P_{v(\beta_1; \beta_2)}( xc ))
 +  \Delta_\beta P_{\beta_2} ( a P_{ v(\beta_1; \beta_2) }( x  c ) )  \bigg) \\
= &
\sum_{(\beta_1, \beta_2) \in L_\beta^{\alpha, \gamma}}
(\Delta_{\beta_2}  - \Delta_{\beta_1} -    \Delta_{ v(\beta_1; \beta_2) } +  \Delta_\beta   )   P_{\beta_2} ( a P_{v(\beta_1; \beta_2)}( xc )
    \\
  &   +  (   \Delta_{ \beta_1 } -  \Delta_\beta   )  (  P_{\beta_2} ( a P_{v(\beta_1; \beta_2)}( xc ) - P_{\beta_2} (   P_{\beta_1}(a x) c)  ) \\
 \end{split}
 \]
 Since $\Vert  P_{\beta_2} ( a P_{v(\beta_1; \beta_2)}( xc ) \Vert_2 \leq \Vert a \Vert \Vert c \Vert \Vert x \Vert_2$ we estimate
 \begin{equation}\label{Eqn=OneOfMainEstimates}
 \begin{split}
 \Vert  \Psi_t^{a,b}(x) \Vert_2  \leq   &
\sum_{(\beta_1, \beta_2) \in L_\beta^{\alpha, \gamma}}
\exp(-t \Delta_{\beta_2} ) \vert \Delta_{\beta_2}  - \Delta_{\beta_1} -    \Delta_{ v(\beta_1; \beta_2) } +  \Delta_\beta   \vert  \Vert a \Vert \Vert c \Vert \Vert x \Vert_2
    \\
  &   +  \exp(-t \Delta_{\beta_2}) \vert   \Delta_{\beta_1 } -  \Delta_\beta   \vert   \Vert P_{\beta_2} (   P_{\beta_1}(a x) c) - P_{\beta_2} ( a P_{v(\beta_1; \beta_2)}( xc ))  \Vert_2.
  \end{split}
 \end{equation}
 Since the semi-group is approximately linear with almost commuting intertwiners we see that by Proposition \ref{Prop=AuxilliaryEstimate} that there exists a constant $C >0$ only depending on $a$ and $c$  such that
 \[
 \Vert P_{\beta_2} (   P_{\beta_1}(a x) c) - P_{\beta_2} ( a P_{v(\beta_1; \beta_2)}( xc ))  \Vert_2 \leq C^{\frac{1}{2}} \qdim(\beta)^{-1} \delta(\beta \in A)  \Vert x \Vert_2,
 \]
 as well as
 \[
    \vert \Delta_{\beta_2}  - \Delta_{\beta_1} -    \Delta_{ v(\beta_1; \beta_2) } +  \Delta_\beta   \vert \leq C \qdim(\beta)^{-1}  \delta(\beta \in A),
 \]
 and in case $\beta \in A$,
 \[
 \vert   \Delta_{ \beta_1  } -  \Delta_\beta   \vert  \leq C^{\frac{1}{2}}.
 \]
 Combining this with \eqref{Eqn=OneOfMainEstimates}, and estimating $\exp(-t \Delta_{\beta_2}) \leq C'  \exp(-t \beta)$ for some constant $C' > 0$ for all $\beta,\beta_2$ in the summations, we find
 \[
 \begin{split}
 \Vert  \Psi_t^{a,b}(x) \Vert_2  \leq   & C (1 + \Vert a \Vert \Vert c \Vert)
\qdim(\beta)^{-1}  \delta(\beta \in A)   \sum_{(\beta_1, \beta_2) \in L_\beta^{\alpha, \gamma}}
\exp(-t \Delta_{\beta_2} )      \Vert x \Vert_2 \\
\leq & C C'  (1 + \Vert a \Vert \Vert c \Vert)
\qdim(\beta)^{-1}  \delta(\beta \in A) (\# L^{\alpha, \gamma}_\beta)  \exp(-t \Delta_{\beta} )      \Vert x \Vert_2.
  \end{split}
 \]
By Lemma \ref{Lem=RepBound} we have that $\# L^{\alpha, \gamma}_\beta$ is bounded in $\beta$ with bound depending only on $\alpha$ and $\gamma$. We may therefore assemble terms and conclude that there exists a constant $C(a,c)$ only depending on $a$ and $c$ such that
\begin{equation}\label{Eqn=MidSummary}
 \Vert  \Psi_t^{a,b}(x) \Vert_2 \leq C(a,c)
\qdim(\beta)^{-1}  \delta(\beta \in A)   \exp(-t \Delta_{\beta} )      \Vert x \Vert_2.
\end{equation}
We can now estimate the term \eqref{Eqn=MainEstimate} as follows, where in the last line we use that the classical dimension is smaller than or equal to the quantum dimension,
\begin{equation}\label{Eqn=Main1}
\begin{split}
 \sum_{\beta \in \Irr(\bG) \backslash A_{00}} \sum_{i,j=1}^{n_\beta} \frac{ \Vert \Psi_t^{a,b}( \langle \beta e_i, e_j \rangle ) \Vert_2^2  }{ \Vert  \langle \beta e_i, e_j \rangle \Vert_2^2  }
 \leq & C(a,c)^2 \sum_{\beta \in A } \sum_{i,j=1}^{n_\beta}
\qdim(\beta)^{-2}     \exp(-2 t \Delta_{\beta} ) \\
\leq & C(a,c)^2 \sum_{\beta \in A }
\qdim(\beta)^{-2} n_\beta^2  \exp(-2 t \Delta_{\beta} ) \\
\leq &  C(a,c)^2   \sum_{\beta \in A }    \exp(-2 t \Delta_{\beta} ). \\
\end{split}
\end{equation}
In turn we may estimate using \eqref{Eqn=Item=Three} of Definition \ref{Dfn=Almost} and get
\begin{equation}\label{Eqn=Main2}
\begin{split}
 \sum_{\beta \in A }    \exp(-2 t \Delta_{\beta} )
 = \sum_{N \in \mathbb{N}} \sum_{\substack{ \beta \in A, \\ N < \Delta_\beta \leq N+1}  }    \exp(-2 t N ) \leq
 \sum_{N \in \mathbb{N}} P(N) \exp(-2t N) < \infty.
\end{split}
\end{equation}
Combining \eqref{Eqn=Main1} and \eqref{Eqn=Main2} we see that for $t >0$,
\[
 \sum_{\beta \in \Irr(\bG) \backslash A_{00}} \sum_{i,j=1}^{n_\beta} \frac{ \Vert \Psi_t^{a,b}( \langle \beta e_i, e_j \rangle ) \Vert_2^2  }{ \Vert  \langle \beta e_i, e_j \rangle \Vert_2^2  }
 \leq  \sum_{\beta \in  \Irr(\bG) \backslash A_{00}} \sum_{i,j=1}^{n_\beta} \frac{ \Vert \Psi_t^{a,b}( \langle \beta e_i, e_j \rangle ) \Vert_2^2  }{ \Vert  \langle \beta e_i, e_j \rangle \Vert_2^2  } +
  C(a,c)^2 \sum_{N \in \mathbb{N}} P(N) \exp(-2t N) < \infty.
\]
So \eqref{Eqn=MainEstimate} is finite as $A_{00}$ is finite.

Finally let $x \in \Pol(\bG)$ and write $x_\beta = P_\beta(x)$ so that $x = \sum_{\beta \in \Irr(\bG)} x_\beta$. We have by the triangle inequality, \eqref{Eqn=MidSummary} and the Cauchy-Schwarz inequality  that
\[
\begin{split}
\Vert \Psi^{a,c}_0(x) \Vert_2 \leq  & \Vert \sum_{\beta \in A_{00}}  P_\beta(x) \Vert_2 +   \sum_{\beta \in A} C(a,c ) \qdim(\beta)^{-1} \Vert P_{\beta} x \Vert_2 \\
\leq &  \Vert \sum_{\beta \in A_{00}}  P_\beta(x) \Vert_2+   C(a,c ) ( \sum_{\beta \in A} \qdim(\beta)^{-2} )^{\frac{1}{2}} ( \sum_{\beta \in A} \Vert P_{\beta} x \Vert_2^2 )^{\frac{1}{2}} \\
\leq  &  (1 + C(a,c ) ( \sum_{\beta \in A} \qdim(\beta)^{-2} )^{\frac{1}{2}} )
( \Vert \sum_{\beta \in A_{00}}  P_\beta(x) \Vert_2 +    \Vert  \sum_{\beta \in A}  P_{\beta} x \Vert_2 ) \\
\leq & \sqrt{2} (1 + C(a,c ) ( \sum_{\beta \in A} \qdim(\beta)^{-2} )^{\frac{1}{2}} )
   \Vert  \sum_{\beta \in \Irr(\bG)}  P_{\beta} x \Vert_2.
\end{split}
\]
This gives boundedness of $\Psi^{a,c}_0$ and concludes that $\Phi$ is immediately gradient-$\cS_2$ by \eqref{Eqn=Item=Three} of Definition \ref{Dfn=Almost}.
 \end{proof}

Recall that we say that a QMS is {\it immediately $L_2$-compact} if for every $t >0$ the map $x \Omega_\varphi \mapsto \Phi_t(x) \Omega_\varphi$ is compact as a map on $L_2(\bG)$. Equivalently the generator $\Delta \geq 0$ has compact resolvent.

\begin{thm}\label{Thm=Gradient}
Let $\bG$ be a compact quantum group with the W$^\ast$-CBAP with constant $\Lambda$. Suppose that $\bG$ admits a QMS that is immediately gradient-$\cS_2$ and immediately $L_2$-compact. Then,
\begin{enumerate}
\item\label{Item=StrongSolidKac} If $\bG$ is of Kac type then $L_\infty(\bG)$ is strongly solid.
\item\label{Item=StrongSolidNonKac} If $L_\infty(\bG)$ is solid and $\Lambda =1$  then it is strongly solid.
\end{enumerate}
\end{thm}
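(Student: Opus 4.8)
The plan is to treat this theorem essentially as an assembly result, verifying that the three hypotheses supply exactly the inputs demanded by the deformation/rigidity machinery developed in \cite{CaspersGradient} and \cite{CIW} along path (2) of the introductory diagram; the rigidity theorems themselves I would cite rather than reprove. First I would record that since $\bG$ admits an immediately $L_2$-compact QMS, its generator $\Delta$ has compact resolvent, so by \cite{CaspersSkalskiCMP}, \cite{JolissaintMartin} the algebra $L_\infty(\bG)$ has the Haagerup property. This is the first ingredient and provides the deformation towards the identity.

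Second, I would invoke the Cipriani--Sauvageot construction \cite{CiprianiSauvageot} to attach to $\Phi = (\exp(-t\Delta))_{t \geq 0}$ a closable derivation $\partial \colon \Pol(\bG) \to H_\nabla$ into the gradient bimodule $H_\nabla$; GNS-symmetry and $\varphi$-modularity of $\Phi$ make this derivation real. The crucial point is that the immediately gradient-$\cS_2$ property is precisely the condition isolated in \cite{CaspersGradient} (see also \cite{CIW}) guaranteeing that $H_\nabla$ is a genuine $L_\infty(\bG)$-$L_\infty(\bG)$-bimodule which is quasi-contained in the coarse bimodule $L_2(\bG) \barotimes L_2(\bG)$. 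Indeed, the maps $\Psi^{a,c}_t$ being Hilbert--Schmidt for $t>0$ is exactly the Hilbert--Schmidt estimate on the carr\'e du champ that forces coarseness of the bimodule after composing with $\exp(-t\Delta)$.

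Third, the W$^\ast$-CBAP gives weak amenability of $\bG$, which by the weak-compactness argument of Ozawa--Popa \cite{OzawaPopaAnnals} (in the form adapted to quantum groups via the weak compactness input of \cite{BHV}) yields that the normalizer action on any diffuse amenable subalgebra $P \subseteq L_\infty(\bG)$ is weakly compact. With the Haagerup-type deformation, the coarse gradient bimodule, and weak compactness in hand, one applies the spectral gap and closable-derivation techniques of \cite{OzawaPopaAnnals}, \cite{Peterson} as executed in \cite{CaspersGradient} to conclude that ${\sf Nor}_{L_\infty(\bG)}(P)''$ is amenable, i.e.\ strong solidity. In the Kac case \eqref{Item=StrongSolidKac} the trace lets this run directly; in the non-Kac case \eqref{Item=StrongSolidNonKac} the modular theory obstructs the straightforward argument, and one imposes solidity together with $\Lambda = 1$ precisely so as to apply the refined (non-tracial) version from \cite{CaspersGradient}.

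The main obstacle is not any single computation but the bookkeeping in the non-tracial setting: ensuring the derivation remains real and closable under $\varphi$-modularity, and checking that the solidity hypothesis together with the W$^\ast$-CCAP ($\Lambda = 1$) are exactly what is needed to upgrade solidity to strong solidity when no trace is available. Since the deformation/rigidity theorems are already established in \cite{CaspersGradient} and \cite{CIW}, I would structure the proof as a sequence of hypothesis verifications matching our assumptions to theirs, rather than as a fresh rigidity argument.
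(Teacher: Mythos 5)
Your proposal is correct and follows essentially the same route as the paper: both treat the theorem as an assembly of known results, constructing the closable real derivation via Cipriani--Sauvageot, using the immediately gradient-$\cS_2$ property to get weak containment of the gradient bimodule in the coarse bimodule (via \cite[Propositions 3.8 and 4.3]{CaspersGradient}), using immediate $L_2$-compactness for the compact resolvent of $\Delta$, and then invoking \cite[Corollary B]{OzawaPopaAJM} together with the weak compactness coming from the W$^\ast$-CBAP, with the non-Kac case delegated to \cite[Proposition 7.9]{CaspersGradient}. The extra remark about the Haagerup property is harmless but not needed; otherwise the hypothesis-matching is exactly what the paper does.
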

\begin{proof}
\eqref{Item=StrongSolidNonKac} was proved in \cite[Proposition 7.9]{CaspersGradient} and is based on the results of \cite{BHV}, \cite{OzawaPopaAJM} and \cite{CiprianiSauvageot}. For  \eqref{Item=StrongSolidKac} we see by \cite[Section 3.2]{CaspersGradient} and \cite[Proposition 3.8]{CaspersGradient} (based on \cite{CiprianiSauvageot}) that there exists a closable real derivation $\partial: \Pol(\bG) \rightarrow H_\partial$ into an $L_\infty(\bG)$-$L_\infty(\bG)$ $H_\partial$ such that $\Delta = \partial^\ast \overline{\partial}$. Further, since $\Phi$ is immediately $L_2$-compact $\Delta$  has compact resolvent. Moreover, by \cite[Proposition 4.3]{CaspersGradient} (see also \cite[Theorem 3.9]{CIW}) this bimodule $H_\partial$ can be constructed in such a way that it is weakly contained in the coarse bimodule  of $L_\infty(\bG)$. It follows then from the main results of \cite[Corollary B]{OzawaPopaAJM} that $L_\infty(\bG)$ is strongly solid; we note that \cite[Corollary B]{OzawaPopaAJM} is only stated for group von Neumann algebras but it holds in this context as well  (see e.g. \cite[Appendix]{CaspersGradient}).
\end{proof}

Combining Theorem \ref{Thm=Gradient} with Theorem \ref{Thm=Main} we conclude the following main results of this paper.

\begin{cor}\label{Cor=KacStronglsySolid}
Let $\bG$ be a compact quantum group of Kac type with the W$^\ast$-CBAP. Suppose that $\bG$ admits a QMS of central multipliers that is approximately linear with almost commuting intertwiners and immediately $L_2$-compact. Then $L_\infty(\bG)$ is strongly solid.
\end{cor}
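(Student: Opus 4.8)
The plan is to invoke the two main results already established, Theorem \ref{Thm=Main} and Theorem \ref{Thm=Gradient}, and to check that the hypotheses line up. First I would observe that the standing assumptions supply everything Theorem \ref{Thm=Gradient} requires except the immediately gradient-$\cS_2$ property: by hypothesis $\bG$ is of Kac type and $L_\infty(\bG)$ has the W$^\ast$-CBAP (with some constant $\Lambda \geq 1$), and the given QMS of central multipliers is immediately $L_2$-compact.

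The key step is then to produce the one missing input. Since the QMS is assumed to be approximately linear with almost commuting intertwiners, Theorem \ref{Thm=Main} applies directly and yields that it is immediately gradient-$\cS_2$. With this in hand, the QMS satisfies both analytic hypotheses of Theorem \ref{Thm=Gradient} simultaneously, namely it is immediately gradient-$\cS_2$ and immediately $L_2$-compact, while $L_\infty(\bG)$ retains the W$^\ast$-CBAP. Applying part \eqref{Item=StrongSolidKac} of Theorem \ref{Thm=Gradient}, which is exactly the Kac case, we conclude that $L_\infty(\bG)$ is strongly solid.

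I do not expect a genuine obstacle here, as all the analytic content has been absorbed into the cited theorems: Theorem \ref{Thm=Main} carries the combinatorial estimates packaged in Definition \ref{Dfn=Almost} through to the gradient-$\cS_2$ bound, and Theorem \ref{Thm=Gradient} encapsulates the derivation and deformation/spectral-gap machinery of \cite{CaspersGradient}, \cite{OzawaPopaAJM}, \cite{CiprianiSauvageot}. The only point that requires a moment of care is the role of the Kac hypothesis: it is what places us in case \eqref{Item=StrongSolidKac} rather than case \eqref{Item=StrongSolidNonKac}, and it guarantees the Haar state is tracial, so that the gradient bimodule and the real closable derivation $\partial$ with $\Delta = \partial^\ast \overline{\partial}$ are available in the tracial setting assumed throughout \cite{CaspersGradient}.
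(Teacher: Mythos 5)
Your proposal is correct and matches the paper exactly: the corollary is obtained by combining Theorem \ref{Thm=Main} (approximate linearity with almost commuting intertwiners implies immediately gradient-$\cS_2$) with part \eqref{Item=StrongSolidKac} of Theorem \ref{Thm=Gradient}. No gaps.
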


 We also get the following corollary which shall not be used further in this paper.

\begin{cor}\label{Cor=NonKacStronglsySolid}
Let $\bG$ be a compact quantum group  with the W$^\ast$-CCAP such that $L_\infty(\bG)$ is solid. Suppose that $\bG$ admits a QMS of central multipliers that is approximately linear with almost commuting intertwiners and immediately $L_2$-compact. Then $L_\infty(\bG)$ is strongly solid.
\end{cor}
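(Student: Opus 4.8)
The plan is to obtain this as a direct corollary of Theorem \ref{Thm=Gradient}\eqref{Item=StrongSolidNonKac}, in exact analogy with the way Corollary \ref{Cor=KacStronglsySolid} packages the Kac case via Theorem \ref{Thm=Gradient}\eqref{Item=StrongSolidKac}. First I would note that W$^\ast$-CCAP is, by definition, nothing but the W$^\ast$-CBAP with Cowling--Haagerup constant $\Lambda = 1$, so $\bG$ has the W$^\ast$-CBAP with $\Lambda = 1$. This is precisely the value of $\Lambda$ required in part \eqref{Item=StrongSolidNonKac}.

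Next I would produce the two structural inputs that Theorem \ref{Thm=Gradient} demands of the QMS. The assumption that the QMS of central multipliers is approximately linear with almost commuting intertwiners feeds directly into Theorem \ref{Thm=Main}, which yields that the QMS is immediately gradient-$\cS_2$. The remaining input, immediate $L_2$-compactness, is assumed outright. Hence both hypotheses on the semigroup in Theorem \ref{Thm=Gradient} are in place.

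Finally I would invoke Theorem \ref{Thm=Gradient}\eqref{Item=StrongSolidNonKac}: with $L_\infty(\bG)$ solid and $\Lambda = 1$, its conclusion gives that $L_\infty(\bG)$ is strongly solid. There is no genuine obstacle at the level of this corollary; all the substantive analysis has already been carried out, namely the estimate in Theorem \ref{Thm=Main} and, upstream of Theorem \ref{Thm=Gradient}, the construction of a closable real derivation $\partial$ with $\Delta = \partial^\ast \overline{\partial}$ whose bimodule is weakly contained in the coarse bimodule (from \cite{CaspersGradient}, \cite{CIW}) together with the deformation and spectral-gap machinery of \cite{OzawaPopaAJM}. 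The only point worth flagging is the bookkeeping that the non-Kac branch of Theorem \ref{Thm=Gradient} really does require $\Lambda = 1$ together with solidity rather than merely the W$^\ast$-CBAP, which is exactly why the hypothesis is strengthened from W$^\ast$-CBAP (as in Corollary \ref{Cor=KacStronglsySolid}) to W$^\ast$-CCAP in the present statement.
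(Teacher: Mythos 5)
Your proposal matches the paper exactly: the corollary is stated immediately after the sentence ``Combining Theorem \ref{Thm=Gradient} with Theorem \ref{Thm=Main} we conclude the following main results,'' and the intended argument is precisely your chain --- Theorem \ref{Thm=Main} gives immediate gradient-$\cS_2$, and Theorem \ref{Thm=Gradient}\eqref{Item=StrongSolidNonKac} with $\Lambda=1$ and solidity gives strong solidity. No gaps; your remark on why the W$^\ast$-CCAP (rather than W$^\ast$-CBAP) is needed in the non-Kac branch is also consistent with the paper's statement of Theorem \ref{Thm=Gradient}.
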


\section{Quantum Markov semi-groups and differentiable families of states}\label{Sect=QMS}

We prove that $SU_q(2)$ admits a QMS of central multipliers that is approximately linear with almost commuting intertwiners. Parts of the proof compare to our analysis from \cite{CaspersGradient}. However, we present a much more conceptual and shorter approach by making use of generating functionals.  The author is indebted to Adam Skalski for showing him  this argument which is contained in Section \ref{Sect=Adam}.

\subsection{Preliminaries on quantum $SU(2)$}

\begin{dfn}
Let $\bG_q, q \in (-1, 1) \backslash \{ 0 \}$ be the quantum $SU(2)$ group. It may be defined as follows. Consider the Hilbert space $\ell_2(\mathbb{N}_{\geq 0}) \otimes \ell_2(\mathbb{Z})$ with natural orthonormal basis $e_i \otimes f_k, i \in \mathbb{N}_{\geq 0}, k \in \mathbb{Z}$. Define the operators
\[
\begin{split}
\alpha e_i \otimes f_k = & \sqrt{1 - q^{2i}}  e_{i-1}\otimes f_k, \\
\gamma e_i \otimes f_k = & q^i e_i \otimes f_{k+1},
\end{split}
\]
and the comultiplication determined by
\[
\Delta_{\bG_q}(\alpha) = \alpha \otimes \alpha - q \gamma^\ast \otimes \gamma, \qquad \Delta_{\bG_q}(\gamma) = \gamma \otimes \alpha + \alpha^\ast \otimes \gamma.
\]
\end{dfn}

It was proved in \cite{Banica} that  $\Irr(\bG_q) = \mathbb{N}_{\geq 0}$ and the fusion rules of $\bG_q$ are described by
\[
\alpha \otimes \beta = \vert \alpha - \beta \vert \oplus  \vert \alpha - \beta \vert + 2 \oplus \ldots \oplus \vert \alpha + \beta \vert -2 \oplus \vert \alpha + \beta \vert, \qquad \alpha, \beta \in \mathbb{N}_{\geq 0}.
\]

\subsection{QMS's on quantum $SU(2)$}\label{Sect=Adam}
We construct a natural QMS of central multipliers on $\bG_q$, i.e. quantum $SU(2)$. The QMS is the same as the one from \cite[Section 6.1]{CaspersGradient} but the approach is more conceptual.  See also \cite{BrannanCrelle}, \cite{CFY} and \cite{CiprianiKulaFranz} for related results.

\begin{dfn}
A {\bf generating functional} is a (linear) functional $L: \Pol(\bG) \rightarrow \mathbb{C}$ such that $L(1) =0$, such that $L(x^\ast) =  \overline{L(x)}$ (i.e. $L$ is self-adjoint) and such that if for $x \in \Pol(\bG)$ we have $\epsilon(x) = 0$ then $L(x^\ast x ) \leq 0$ (i.e. $L$ is conditionally negative definite).
\end{dfn}

A state on the unital $\ast$-algebra $\Pol(\bG)$ is a map $\mu: \Pol(\bG) \rightarrow \mathbb{C}$ such that $\mu(x^\ast x) \geq 0, x \in \Pol(\bG)$ and $\mu(1) = 1$. Recall that  $\epsilon$ denotes the counit.

\begin{prop}\label{Prop=GeneratingFunctional}
Let $\bG$ be a compact quantum group and let $(\mu_t)_{t \geq 0}$ be a family of states on $\Pol(\bG)$ (not necessarily forming a convolution semi-group). Assume that for every $x \in \Pol(\bG)$ the limit
\[
L(x) := \lim_{t \searrow 0}  \frac{1}{t} (\epsilon(x) - \mu_t(x)),
\]
exists. Then $L: \Pol(\bG) \rightarrow \mathbb{C}$ is a generating functional.
\end{prop}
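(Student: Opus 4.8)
The plan is to verify the three defining properties of a generating functional directly from the description of $L$ as a pointwise limit. First, $L$ is well-defined and linear: for each fixed $x \in \Pol(\bG)$ the limit exists by hypothesis, and since every functional $x \mapsto t^{-1}(\epsilon(x) - \mu_t(x))$ is linear, linearity is inherited by the pointwise limit. The normalization $L(1) = 0$ is immediate, because $\epsilon(1) = 1 = \mu_t(1)$ for all $t$ (the counit and all states are unital), so the difference $\epsilon(1) - \mu_t(1)$ vanishes identically even before dividing by $t$ and taking the limit.

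For self-adjointness I would invoke that the counit $\epsilon$ is a unital $\ast$-homomorphism into $\mathbb{C}$, so $\epsilon(x^\ast) = \overline{\epsilon(x)}$, together with the fact that each $\mu_t$, being a state, is self-adjoint, i.e.\ $\mu_t(x^\ast) = \overline{\mu_t(x)}$. Combining these gives $t^{-1}(\epsilon(x^\ast) - \mu_t(x^\ast)) = \overline{t^{-1}(\epsilon(x) - \mu_t(x))}$ for every $t > 0$, and passing to the limit (complex conjugation being continuous) yields $L(x^\ast) = \overline{L(x)}$.

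The key property is conditional negativity. Suppose $\epsilon(x) = 0$. Since $\epsilon$ is \emph{multiplicative}, we have $\epsilon(x^\ast x) = \epsilon(x^\ast)\epsilon(x) = |\epsilon(x)|^2 = 0$. On the other hand, positivity of the state $\mu_t$ gives $\mu_t(x^\ast x) \geq 0$ for every $t$. Hence for all $t > 0$,
\[
\frac{1}{t}\bigl(\epsilon(x^\ast x) - \mu_t(x^\ast x)\bigr) = -\frac{1}{t}\,\mu_t(x^\ast x) \leq 0,
\]
and letting $t \searrow 0$ preserves the inequality, so $L(x^\ast x) \leq 0$; this quantity is real by the self-adjointness already established (as $(x^\ast x)^\ast = x^\ast x$).

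There is no serious obstacle here: the entire argument is elementary once the right structural facts are assembled. The only place where the state axioms genuinely interact with the algebra structure is in the conditional-negativity step, where multiplicativity of $\epsilon$ is used to force $\epsilon(x^\ast x) = 0$; this is the conceptual crux, and everything else reduces to passing linearity, conjugation, and sign through a limit of functionals that exists by assumption.
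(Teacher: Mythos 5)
Your proof is correct and follows essentially the same route as the paper: the key step in both is to use multiplicativity of the counit to get $\epsilon(x^\ast x) = |\epsilon(x)|^2 = 0$ and positivity of each state $\mu_t$ to conclude $L(x^\ast x) \leq 0$, with the remaining properties ($L(1)=0$, self-adjointness, linearity) passing through the limit immediately. You have merely spelled out the steps the paper dismisses as ``clear.''
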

\begin{proof}
Let $x \in \Pol(\bG)$ be such that $\epsilon(x) = 0$. Then,
\[
\mu_t(x^\ast x) - \epsilon(x^\ast x) =  \mu_t(x^\ast x) - \epsilon(x)^\ast \epsilon(x) = \mu_t(x^\ast x) \geq 0.
\]
All other properties are clear.
\end{proof}

Let $U_\alpha, \alpha \in \mathbb{N}$ be the Chebyshev polynomials of the second kind with derivative $U_\alpha'$. They are orthogonal polynomials satisfying $U_0 = 1, U_1(\lambda) = \lambda$ and the recursion relation
\[
\lambda U_\alpha(\lambda) = U_{\alpha + 1}(\lambda) + U_{\alpha - 1}(\lambda), \qquad \lambda \in \mathbb{R}, \alpha \in \mathbb{N}_{\geq 1}.
\]
In \cite[Theorem 17]{CFY} (see also \cite{BrannanCrelle}, \cite{FimaVergnioux}) it was proved that for every $t \in [-1, 1]$ there exists a state $\mu_t: \Pol(\bG) \rightarrow \mathbb{C}$ characterized by
\begin{equation}\label{Eqn=CFY}
\mu_t   (u^\alpha_{ij}) = \left( \frac{ U_\alpha(q^t + q^{-t}) }{U_\alpha(q + q^{-1})} \right)^{3} \delta_{i,j}, \qquad \alpha \in \mathbb{N}_{\geq 0}, 1 \leq i,j \leq n_\alpha.
\end{equation}

\begin{prop}\label{Prop=ExplicitGamma} There exists a generating functional $L: Pol(\bG_q) \rightarrow \mathbb{C}$ given by
\[
(L\otimes \id) u^\alpha  =  \Delta_\alpha \id_{n_\alpha}, \qquad {\rm with } \qquad \Delta_\alpha = \frac{ U_\alpha'(q^1 + q^{-1}) }{U_\alpha(q + q^{-1})}.
\]
\end{prop}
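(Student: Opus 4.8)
The plan is to realise $L$ by differentiating the family of states $(\mu_t)$ from \eqref{Eqn=CFY} at the parameter where it passes through the counit, and then to quote Proposition \ref{Prop=GeneratingFunctional}. The first thing I would record is that at $t=1$ we have $q^{1}+q^{-1}=q+q^{-1}$, so $\mu_1(u^\alpha_{ij})=\delta_{ij}=\epsilon(u^\alpha_{ij})$ for all $\alpha,i,j$; since both $\mu_1$ and $\epsilon$ are linear functionals determined by their values on the matrix coefficients, $\mu_1=\epsilon$. Thus the base point of the family is $t=1$, not $t=0$. I would therefore reparametrise by setting $\nu_s:=\mu_{1-s}$ for small $s\geq 0$ (so that $1-s\in[-1,1]$), producing a family of states on $\Pol(\bG_q)$ with $\nu_0=\epsilon$.

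Next I would check that the limit $L(x):=\lim_{s\searrow 0}\tfrac{1}{s}(\epsilon(x)-\nu_s(x))$ exists for every $x\in\Pol(\bG_q)$. By linearity it is enough to treat the spanning matrix coefficients $x=u^\alpha_{ij}$, for which $\nu_s(u^\alpha_{ij})=\bigl(U_\alpha(q^{1-s}+q^{-(1-s)})/U_\alpha(q+q^{-1})\bigr)^{3}\delta_{ij}$ is a smooth function of $s$, being the polynomial $U_\alpha$ composed with the smooth map $s\mapsto q^{1-s}+q^{-(1-s)}$. Hence each difference quotient converges and, by linearity, the limit exists on all of $\Pol(\bG_q)$. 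Proposition \ref{Prop=GeneratingFunctional} applied to $(\nu_s)_{s\geq 0}$ then delivers at once that $L$ is a generating functional: $L(1)=0$, self-adjointness holds because each $\nu_s$ and $\epsilon$ are self-adjoint, and conditional negativity follows from $\epsilon(x^\ast x)-\nu_s(x^\ast x)=-\nu_s(x^\ast x)\leq 0$ whenever $\epsilon(x)=0$.

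It then remains to identify the eigenvalues. Writing $g(t)=q^t+q^{-t}$, so that $g(1)=q+q^{-1}$ and $g'(1)=(q-q^{-1})\ln q$, the chain rule gives, for $\alpha\in\Irr(\bG_q)$,
\[
L(u^\alpha_{ij})=\frac{d}{dt}\Big|_{t=1}\mu_t(u^\alpha_{ij})=3\,g'(1)\,\frac{U_\alpha'(q+q^{-1})}{U_\alpha(q+q^{-1})}\,\delta_{ij},
\]
where the factor $U_\alpha(g(1))/U_\alpha(q+q^{-1})$ has been evaluated to $1$ at $t=1$. Hence $(L\otimes\id)u^\alpha=c\,\Delta_\alpha\,\id_{n_\alpha}$ with $\Delta_\alpha=U_\alpha'(q+q^{-1})/U_\alpha(q+q^{-1})$ and $c=3(q-q^{-1})\ln q$. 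A sign check shows $c>0$ (for $q\in(0,1)$ both $q-q^{-1}$ and $\ln q$ are negative), and since $U_\alpha,U_\alpha'>0$ on $(2,\infty)$ we also get $\Delta_\alpha\geq 0$ with $\Delta_0=0$. Because multiplying a generating functional by a positive scalar preserves all three defining properties, replacing $L$ by $c^{-1}L$ yields a generating functional with exactly the asserted eigenvalues $\Delta_\alpha$.

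The step I expect to require the most care is the reparametrisation together with bookkeeping of the proportionality constant: one must differentiate $(\mu_t)$ at $t=1$, where it meets $\epsilon$, rather than at $t=0$, and then normalise away the positive factor $c$ so that the functional has precisely the form $U_\alpha'/U_\alpha$. The only genuinely analytic verification is the existence of the defining limit on all of $\Pol(\bG_q)$, which I reduce to smoothness of the elementary functions $s\mapsto\nu_s(u^\alpha_{ij})$ plus linearity; the positivity of $c$ (and hence the claimed nonnegativity of the $\Delta_\alpha$) is then a direct elementary estimate in the relevant range of $q$.
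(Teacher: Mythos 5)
Your proposal is correct and follows essentially the same route as the paper: differentiate the Chebyshev family of states $(\mu_t)$ at the point $t=1$ where it meets the counit, invoke Proposition \ref{Prop=GeneratingFunctional}, and rescale by the positive constant coming from the chain rule to obtain the stated eigenvalues. Your version is in fact slightly more careful than the paper's, since you make the reparametrisation $\nu_s=\mu_{1-s}$ explicit and correctly keep the factor $3$ from the cube in \eqref{Eqn=CFY} in the normalising constant, which the paper's displayed derivative omits (harmlessly, as any positive multiple of a generating functional is again one).
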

\begin{proof}
Consider the function,
\[
c_\alpha(t) :=  \left( \frac{ U_\alpha(q^t + q^{-t}) }{U_\alpha(q + q^{-1})} \right)^{3}, \qquad [-1, 1].
\]
The  derivative of this function is,
\[
\begin{split}
c'_\alpha(t )
%&
%U_\alpha(q + q^{-1})^{-3}  U_\alpha(q^t + q^{-t})^2  U_\alpha'(q^t + q^{-t}) (q^t \log(q) - q^{-t} \log(q)  ) \\
= & \frac{  U_\alpha'(q^t + q^{-t})}{U_\alpha(q + q^{-1})} (q^t  - q^{-t}  ) \log(q).
\end{split}
\]
%So that
%\[
%c_\alpha'(1) = \frac{  U_\alpha'(q^1 + q^{-1})}{U_\alpha(q + q^{-1})} (q^1  - q^{-1}  ) \log(q).
%\]
Proposition \ref{Prop=GeneratingFunctional} and \eqref{Eqn=CFY} show that there is a generating functional $L_0: \Pol(\bG) \rightarrow \mathbb{C}$ determined by
\[
(L_0 \otimes \id) (u^\alpha)  = c_\alpha'(1)  \id_{n_\alpha}.
\]
Then also $L =  \log(q)^{-1} (q  - q^{-1}   )^{-1} L$ is a generating functional and the proposition is proved.
\end{proof}
\begin{thm}\label{Thm=Markov}
Let $\bG = SU_q(2)$ with $q \in (-1,1) \backslash \{ 0 \}$. There exists a QMS $\Phi = (\Phi_t)_{t \geq 0}$ on $L_\infty(\bG)$ determined by
\[
(\Phi_t \otimes \id) u^\alpha =  \exp(-t \Delta_\alpha)  u^\alpha, \qquad \alpha \in \mathbb{N}_{\geq 0}.
\]
Here $\Delta_\alpha$ is defined in Proposition \ref{Prop=ExplicitGamma}. Moreover, $\Phi$ is approximately linear with almost commuting intertwiners.
\end{thm}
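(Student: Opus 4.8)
The plan is to first produce the semigroup from the generating functional and then verify the three clauses of Definition~\ref{Dfn=Almost}, placing almost all of $\Irr(\bG_q)=\mathbb{N}_{\geq 0}$ into the set $A$ so that the vanishing conditions \eqref{Eqn=AlmostOneVanish} and \eqref{Eqn=AlmostTwoVanish} never have to be invoked. First I would construct $\Phi$ from the generating functional $L$ of Proposition~\ref{Prop=ExplicitGamma}. Since $L$ is conditionally negative definite, the Schoenberg correspondence produces states $\exp_\ast(-tL)$ on $\Pol(\bG_q)$ for all $t\geq 0$, whose associated central multipliers are precisely the maps $\Phi_t$ with $(\Phi_t\otimes\id)u^\alpha=\exp(-t\Delta_\alpha)u^\alpha$. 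These are automatically unital and completely positive, which forces $\Delta_\alpha\geq 0$ and $\Delta_0=0$; they form a strongly continuous semigroup and, being central multipliers, are GNS-symmetric and modular by the preliminaries. Hence $\Phi$ is a QMS of central multipliers with generator $\Delta$ and eigenvalues $\Delta_\alpha$.

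Next I would extract the asymptotics of $\Delta_\alpha$, which drive everything else. Treating $q\in(0,1)$ and writing $q=e^{-s}$, so that $\qdim(\alpha)=U_\alpha(q+q^{-1})=\sinh((\alpha+1)s)/\sinh(s)$, a direct differentiation of $\Delta_\alpha=U_\alpha'(q+q^{-1})/U_\alpha(q+q^{-1})$ gives
\[
\Delta_\alpha=\frac{(\alpha+1)\coth((\alpha+1)s)}{2\sinh(s)}-\frac{\cosh(s)}{2\sinh^2(s)}.
\]
As $\coth((\alpha+1)s)=1+O(q^{2\alpha})$, this exhibits $\Delta_\alpha=a\alpha+b+r_\alpha$ with $a=(2\sinh s)^{-1}>0$ and remainder $r_\alpha=O(\alpha q^{2\alpha})$, while $\qdim(\alpha)^{-1}=O(q^\alpha)$ (the case $q\in(-1,0)$ is identical after replacing $q$ by $|q|$). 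I would then take $A_{00}(\alpha,\gamma)$ to be the finite set of Lemma~\ref{Lem=FrobConsequence1} together with finitely many small $\beta$, set $A=\Irr(\bG_q)\setminus A_{00}$, and define the $v$-maps through the $SU(2)$-type fusion rules: for $\beta_1=\beta+i\subseteq\alpha\otimes\beta$ and $\beta_2=\beta_1+j\subseteq\beta_1\otimes\gamma$, put $v(\beta_1):=\beta+j$, so that $\beta-\beta_1-v(\beta_1)+\beta_2=0$. This difference-preserving choice is exactly what realizes the required bijection $L^{\alpha,\gamma}_{\beta,\beta_2}\to R^{\alpha,\gamma}_{\beta,\beta_2}$.

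With these choices the approximate-linearity clause \eqref{Eqn=Item=One} should be almost immediate: the affine parts $a\beta$ cancel by the definition of $v$, leaving $r_\beta-r_{\beta_1}-r_{v(\beta_1)}+r_{\beta_2}$; since $\beta_1,v(\beta_1),\beta_2$ differ from $\beta$ by at most $\alpha+\gamma$, this is $O(\beta q^{2\beta})$, and dividing by $\qdim(\beta)^{-1}=O(q^\beta)$ leaves the bounded quantity $O(\beta q^\beta)$, giving \eqref{Eqn=AlmostOne}. Inequality \eqref{Eqn=AlmostOneB} follows from $|\Delta_\beta-\Delta_{\beta_1}|\approx a|\beta-\beta_1|\leq a(\alpha+\gamma)$. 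The growth clause \eqref{Eqn=Item=Three} is then a consequence of the linear growth of $\Delta_\beta$, which makes $\#\{\beta\in A:\Delta_\beta<N\}$ grow linearly (hence polynomially) in $N$, and of the geometric decay $\qdim(\beta)^{-1}=O(q^\beta)$, which is square-summable.

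The hard part will be the almost-commuting-intertwiner clause \eqref{Eqn=Item=Two}. The two isometries in \eqref{Eqn=AlmostTwo} are the left- and right-associated realizations of $\beta_2$ inside $\alpha\otimes\beta\otimes\gamma$, and their difference, up to a phase in $\mathbb{T}$, is governed by the quantum $6j$-symbols (Racah coefficients) of $\bG_q$. To bound it by $C\qdim(\beta)^{-1}$ I would rewrite the expression in terms of these recoupling coefficients and invoke the sharp asymptotic estimates of \cite[Appendix]{VaesVergnioux}, exactly as in \cite[Section~6.1]{CaspersGradient}: for large $\beta$ the relevant $6j$-symbol is, up to a phase, within $O(\qdim(\beta)^{-1})$ of the one selecting $v(\beta_1)$. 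This is the step that requires the estimates to be precisely sharp and is the main obstacle; here the affine structure of $\Delta_\alpha$ plays no role, only the geometry of the $SU_q(2)$ intertwiners does. Once \eqref{Eqn=AlmostTwo} is established, all clauses of Definition~\ref{Dfn=Almost} hold and the theorem follows.
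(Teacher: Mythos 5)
Your construction of the QMS, your choice of $A_{00}$, $A$ and the $v$-maps $v(\beta_1;\beta,\beta_2)=\beta+\beta_2-\beta_1$, and your verification of clauses \eqref{Eqn=Item=One} and \eqref{Eqn=Item=Three} all match the paper's argument (Theorem \ref{Thm=Markov}, Proposition \ref{Prop=ExplicitGamma2}): the paper likewise uses the Schoenberg-type exponentiation of the generating functional via \cite[Lemma 6.14]{DFSW}, the explicit affine-plus-exponentially-small form of $\Delta_\alpha$ from \cite[Lemma 4.4]{FimaVergnioux} (your hyperbolic rewriting is equivalent), and the same cancellation of the affine part to get the $O(\beta\,\qdim(\beta)^{-2})\leq C\qdim(\beta)^{-1}$ bound in \eqref{Eqn=AlmostOne}.

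The genuine gap is in clause \eqref{Eqn=Item=Two}. You propose to bound the intertwiner defect by ``invoking the sharp asymptotic estimates of \cite[Appendix]{VaesVergnioux}'' for the relevant recoupling coefficients, but those estimates (Lemmas A.1 and A.2 there) are proved \emph{only} for $\alpha=\gamma=1$, i.e.\ for the fundamental representation, and only for certain sign patterns $(k,l)$; for general $\alpha,\gamma\in\mathbb{N}_{\geq 0}$ the required $6j$-asymptotics are precisely what must be established, and you give no argument for them. This is exactly the content of the paper's Section on almost commuting intertwiners: Lemma \ref{Lem=InterIso} decomposes $V^{\alpha+1,\beta}_{\beta+k}$ as a sum over $k'$ of compositions $V^{1,\beta+k'}_{\beta+k}(\id_1\otimes V^{\alpha,\beta}_{\beta+k'})(V^{1,\alpha}_{\alpha+1}\otimes\id_\beta)^\ast$ through the fundamental representation, and Proposition \ref{Prop=AlmostCom} then runs an induction on $\alpha$ (and symmetrically on $\gamma$), using the triangle inequality over a number of summands depending only on $\alpha,\gamma$ and a careful normalization of phases (the infimum over $z\in\mathbb{T}$ is attained at a $\beta$-dependent phase $z_\beta$ that must be absorbed before summing), to bootstrap the $\alpha=\gamma=1$ case to all $\alpha,\gamma$. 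Without this self-improvement step, or an independent derivation of the general quantum $6j$-symbol asymptotics, your proof of \eqref{Eqn=AlmostTwo} does not go through; you correctly identify this as the main obstacle but leave it unresolved.
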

\begin{proof}
Let $L: \Pol(\bG) \rightarrow \mathbb{C}$ be the generating functional from Proposition \ref{Prop=ExplicitGamma}. By \cite[Lemma 6.14]{DFSW} we see that
\[
\exp(-t L) := \sum_{k=0}^\infty \frac{1}{k!} (-t L)^{\ast k},
\]
is a convolution semi-group of states. We set
\[
\Phi_t = (\exp(-t L) \otimes \id) \circ \Delta, \qquad t \geq 0,
\]
which then forms a QMS.  We have, writing $u_{i,j}^{\alpha}$ for the matrix coefficients with respect to some orthonormal basis of $\mathbb{C}^{n_\alpha}$, that
\[
\Phi_t(  u_{ij}^\alpha) =  (\exp(-t L) \otimes \id)\left( \sum_{k=1}^{n_\alpha} u_{ik}^{\alpha} \otimes u_{kj}^{\alpha} \right) = \exp(-t \Delta_\alpha) u_{ij}^{\alpha}.
\]
It follows that $(\Phi_t)_{t \geq 0}$ is a QMS with the desired properties.
\end{proof}

\subsection{Approximate linearity} In the current and next section we prove that the QMS from Theorem \ref{Thm=Markov} is approximately linear with almost commuting intertwiners.
In order to do so we fix the following notation.

Recall that $\Irr(\bG) = \mathbb{N}_{\geq 0}$. Take $\alpha, \gamma \in \mathbb{N}_{\geq 0}$. Set $A_{00} = \{0, 1, \ldots,  \max(\alpha, \gamma)\}$ and let $A = \mathbb{N}_{\geq 0} \backslash A_{00}$. We note that $A$ and $A_{00}$ partition $\mathbb{N}_{\geq 0}$ and therefore we do not need to check \eqref{Eqn=AlmostOneVanish} and \eqref{Eqn=AlmostTwoVanish}.
 Take $\beta \in A$.  Then if $\beta_2 \subseteq \alpha \otimes \beta \otimes \gamma$ we must have $\beta_2 \in \{ \beta - \alpha - \gamma, \beta - \alpha - \gamma + 2, \ldots, \beta + \alpha + \gamma  \}$. We have
\[
\begin{split}
L^{\alpha, \gamma}_{\beta, \beta_2} = & \{ \beta-\alpha, \beta-\alpha+2, \ldots, \beta + \alpha \}, \\
R^{\alpha, \gamma}_{\beta, \beta_2} = & \{ \beta - \gamma, \beta-\gamma+2, \ldots, \beta+\gamma \}.
\end{split}
\]
 We set $v(\beta_1; \beta, \beta_2) = \beta + \beta_2 - \beta_1$.

The proof of the next proposition is the same as \cite[Section 6.1 and 6.2]{CaspersGradient}.

\begin{prop}\label{Prop=ExplicitGamma2}
The QMS defined in Theorem \ref{Thm=Markov} is approximately linear.
\end{prop}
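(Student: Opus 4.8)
The plan is to reduce everything to the explicit eigenvalues from Proposition~\ref{Prop=ExplicitGamma} and to the single observation that $\alpha\mapsto\Delta_\alpha$ is affine up to a remainder that decays faster than $\qdim(\cdot)^{-1}$. Write $|q|=e^{-s}$ with $s>0$. Using the standard identity $U_\alpha(x+x^{-1})=(x^{\alpha+1}-x^{-(\alpha+1)})/(x-x^{-1})$ and differentiating $U_\alpha'/U_\alpha$ at $x=q$, one obtains for $q\in(0,1)$ the closed form
\[
\Delta_\alpha=\frac{1}{2\sinh^2 s}\Big((\alpha+1)\sinh(s)\coth\!\big((\alpha+1)s\big)-\cosh s\Big),
\]
the case $q\in(-1,0)$ being the same computation after the parity signs of $U_\alpha,U_\alpha'$ are taken into account (note $\qdim(\alpha)=\sinh((\alpha+1)s)/\sinh s$ depends only on $|q|$). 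First I would record the splitting $\Delta_\alpha=\ell_\alpha+\epsilon_\alpha$ with affine part $\ell_\alpha=a\alpha+b$, $a=(2\sinh s)^{-1}$, and remainder
\[
\epsilon_\alpha=\frac{\alpha+1}{2\sinh s}\big(\coth((\alpha+1)s)-1\big)\ge 0 .
\]
From $\coth(u)-1=2e^{-2u}/(1-e^{-2u})$ one reads off $0\le\epsilon_\alpha\le C_s\,(\alpha+1)e^{-2(\alpha+1)s}$; in particular $(\epsilon_\alpha)_\alpha$ is bounded and decays twice as fast as $\qdim(\alpha)^{-1}\asymp e^{-(\alpha+1)s}$, which is the decisive margin.

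The key point is that the combination in \eqref{Eqn=AlmostOne} annihilates the affine part exactly. With the $v$-map fixed in this section, $v:=v(\beta_1;\beta,\beta_2)=\beta+\beta_2-\beta_1$, one has $\beta-\beta_1-v+\beta_2=0$, hence $\ell_\beta-\ell_{\beta_1}-\ell_{v}+\ell_{\beta_2}=0$ and consequently
\[
\Delta_\beta-\Delta_{\beta_1}-\Delta_{v}+\Delta_{\beta_2}=\epsilon_\beta-\epsilon_{\beta_1}-\epsilon_{v}+\epsilon_{\beta_2}.
\]
Since $A_{00}=\{0,\dots,\max(\alpha,\gamma)\}$ and $A=\mathbb{N}_{\geq 0}\setminus A_{00}$ partition $\Irr(\bG)$, only \eqref{Eqn=AlmostOne} and \eqref{Eqn=AlmostOneB} have to be checked, the vanishing identities \eqref{Eqn=AlmostOneVanish} and \eqref{Eqn=AlmostTwoVanish} being vacuous.

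To close \eqref{Eqn=AlmostOne} I would use that for $(\beta_1,\beta_2)\in L^{\alpha,\gamma}_\beta$ all four indices lie in $[\beta-(\alpha+\gamma),\beta+(\alpha+\gamma)]$: indeed $\beta_1\subseteq\alpha\otimes\beta$ gives $|\beta_1-\beta|\le\alpha$, $\beta_2\subseteq\alpha\otimes\beta\otimes\gamma$ gives $|\beta_2-\beta|\le\alpha+\gamma$, and $v\subseteq\beta\otimes\gamma$ gives $|v-\beta|\le\gamma$. Bounding each $\epsilon$ at its smallest admissible index yields
\[
\big|\epsilon_\beta-\epsilon_{\beta_1}-\epsilon_{v}+\epsilon_{\beta_2}\big|\le 4C_s\,e^{2(\alpha+\gamma)s}\,(\beta+\alpha+\gamma+1)\,e^{-2(\beta+1)s},
\]
and since $(\beta+\alpha+\gamma+1)e^{-(\beta+1)s}$ is bounded in $\beta$ one absorbs one factor $e^{-(\beta+1)s}$ into a constant and compares the rest with $\qdim(\beta)^{-1}\ge 2\sinh(s)\,e^{-(\beta+1)s}$. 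This gives \eqref{Eqn=AlmostOne} with a constant $C(\alpha,\gamma)$ depending only on $\alpha,\gamma$ (and $q$). Estimate \eqref{Eqn=AlmostOneB} is then immediate from $\Delta_\beta-\Delta_{\beta_1}=a(\beta-\beta_1)+(\epsilon_\beta-\epsilon_{\beta_1})$ together with $|\beta-\beta_1|\le\alpha$ and $\sup_\alpha\epsilon_\alpha<\infty$. For completeness I would also note that the size condition \eqref{Eqn=AlmostThree} follows from the same splitting: $\Delta_\beta$ grows linearly, so $\#\{\beta\in A:\Delta_\beta<N\}=O(N)$, and $\qdim(\beta)^{-1}$ decays exponentially, so $\beta\mapsto\delta(\beta\in A)\qdim(\beta)^{-1}$ is square summable.

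The only genuinely delicate ingredient is the closed form for $\Delta_\alpha$ together with the clean affine-plus-exponentially-small decomposition having the correct factor-of-two decay rate relative to $\qdim(\beta)^{-1}$; once that is secured, the whole estimate is forced by the exact cancellation $\beta-\beta_1-v+\beta_2=0$. I expect the parity and sign bookkeeping in the case $q\in(-1,0)$ to be the most error-prone (if routine) part, which is why I would phrase the entire argument through $s=-\log|q|$ and the quantum integers rather than through $q+q^{-1}$ directly.
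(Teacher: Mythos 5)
Your proof is correct and follows essentially the same route as the paper: both rest on the explicit formula for $\Delta_\alpha$, the exact cancellation of the linear part coming from $\beta+\beta_2=\beta_1+v(\beta_1;\beta,\beta_2)$, and the fact that the remaining error decays like $\beta\,\qdim(\beta)^{-2}$, which absorbs the polynomial factor and leaves the required $\qdim(\beta)^{-1}$. Your explicit splitting $\Delta_\alpha=\ell_\alpha+\epsilon_\alpha$ into an affine part and a nonnegative exponentially small remainder is merely a cleaner bookkeeping of the term-by-term differences the paper estimates directly from the Fima--Vergnioux formula.
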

\begin{proof}
%The almost commuting intertwining property \eqref{Eqn=AlmostTwo} shall be proved in Lemma \ref{Lem=AlmostCom}.
%The estimates \eqref{Eqn=AlmostOne} and \eqref{Eqn=AlmostOneB} can essentially be proved as in \cite[Sections 6.1 and 6.2]{CaspersGradient} and we repeat the argument here for convenience of the reader.
For any $m,n \in \mathbb{Z} \backslash \{ 0 \}$ we have
\[
\frac{1 + q^{-2m} }{   1- q^{-2m}}  - \frac{1 + q^{-2n} }{   1- q^{-2n}}
=
\frac{   2 (   q^{-2m} -  q^{-2n} )   }{  ( 1- q^{-2m}) ( 1- q^{-2n})   } = \frac{2 (q^{2n} - q^{2m} )}{  (q^{2m} -1 ) ( q^{2n} -1 )  }.
\]
Let $N_q = q + q^{-1}$ which is the quantum dimension of the fundamental representation.
By \cite[Lemma 4.4]{FimaVergnioux} we have the explicit expression
\begin{equation}\label{Eqn=FimaVergnioux}
\Delta_\alpha = \frac{\alpha}{ \sqrt{ N_q^2 - 4}  } \left(  \frac{1+ q^{-2\alpha -2}}{ 1 - q^{-2 \alpha -2} } \right) + \frac{2}{(1-q^2)  \sqrt{N_q^2 - 4}}.
\end{equation}
Therefore it follows that for $\beta, \beta_1 \in \Irr(\bG)$ we have
\[
\begin{split}
\vert \Delta_\beta - \Delta_{\beta_1} \vert \leq & \vert \beta - \beta_1 \vert   \frac{1}{ \sqrt{ N_q^2 - 4}  }   \frac{1+ q^{-2\beta -2}}{ 1 - q^{-2 \beta -2} }    +
\frac{\beta_1}{ \sqrt{ N_q^2 - 4}    } \left|  \frac{1+ q^{ -2 \beta - 2 }}{ 1 - q^{ -2 \beta - 2 } } -  \frac{1+ q^{-2\beta_1 -2}}{ 1 - q^{-2 \beta_1 -2} }  \right| \\
= &  \vert \beta - \beta_1 \vert   \frac{1}{ \sqrt{ N_q^2 - 4}  }   \frac{1+ q^{-2\beta -2}}{ 1 - q^{-2 \beta -2} }    +
\frac{\beta_1}{ \sqrt{ N_q^2 - 4}    } \left| \frac{2 (q^{2\beta+2} - q^{2\beta_1+2} )}{  (q^{2 \beta +2} -1 ) ( q^{2 \beta_1 +2} -1 )  }  \right|.
\end{split}
\]
This expression can be estimated uniformly over all $\beta, \beta_1 \in \mathbb{N}_{\geq 0}$ with $\vert \beta - \beta_1 \vert \leq \alpha + \gamma$. This yields \eqref{Eqn=AlmostOneB}.  Further,
\[
\begin{split}
   &  \sqrt{N_q^2 - 4} \vert \Delta_{\beta} - \Delta_{\beta_1} - \Delta_{\beta + \beta_2 - \beta_1}   + \Delta_{\beta_2} \vert  \\
=  &   \left|   \beta \frac{1+ q^{-2\beta -2}}{ 1 - q^{-2 \beta -2} } -   \beta_1 \frac{1+ q^{-2\beta_1 -2}}{ 1 - q^{-2 \beta_1 -2} }
-  (\beta + \beta_2 - \beta_1) \frac{1+ q^{-2(\beta + \beta_2 - \beta_1) -2}}{ 1 - q^{-2 (\beta + \beta_2 - \beta_1) -2} }
+  \beta_2 \frac{1+ q^{-2\beta_2 -2}}{ 1 - q^{-2 \beta_2 -2} }
\right| \\
\leq &     \beta \left| \frac{1+ q^{-2\beta -2}}{ 1 - q^{-2 \beta -2} }
-  \frac{1+ q^{-2(\beta + \beta_2 - \beta_1) -2}}{ 1 - q^{-2 (\beta + \beta_2 - \beta_1) -2} } \right|
+ \beta_1 \left|
\frac{1+ q^{-2\beta_1 -2}}{ 1 - q^{-2 \beta_1 -2} } - \frac{1+ q^{-2(\beta + \beta_2 - \beta_1) -2}}{ 1 - q^{-2 (\beta + \beta_2 - \beta_1) -2} }
\right|\\
&
+ \beta_2 \left|
\frac{1+ q^{-2\beta_2 -2}}{ 1 - q^{-2 \beta_2 -2} } - \frac{1+ q^{-2(\beta + \beta_2 - \beta_1) -2}}{ 1 - q^{-2 (\beta + \beta_2 - \beta_1) -2} }
\right| \\
\leq & \beta \vert q^{2 \beta} - q^{2 (\beta + \beta_2 - \beta_1)} \vert + \beta_1 \vert q^{2 \beta_1} - q^{2 (\beta + \beta_2 - \beta_1)} \vert
+ \beta_2 \vert q^{2 \beta_2} - q^{2 (\beta + \beta_2 - \beta_1)} \vert.
\end{split}
\]
As asymptotically $\qdim(\beta) \approx q^{-\beta}$ we see that there exists a constant $C>0$ such that for all  $\beta, \beta_1, \beta_2 \in \mathbb{N}_{\geq 0}$ with $\vert \beta - \beta_1 \vert \leq \alpha + \gamma$ and $\vert \beta - \beta_2 \vert \leq \alpha + \gamma$ we have that
\[
\sqrt{N_q^2 - 4} \vert \Delta_{\beta} - \Delta_{\beta_1} - \Delta_{\beta + \beta_2 - \beta_1}   + \Delta_{\beta_2} \vert  \leq C \beta \qdim(\beta)^{-2} \leq C \qdim(\beta)^{-1}.
\]
This yields the desired estimate \eqref{Eqn=AlmostOne}.
\end{proof}

\subsection{Almost commuting intertwiners}

In this section we extend the results from \cite[Appendix]{VaesVergnioux} on almost commuting intertwiners. In fact these results are self-improving in the sense that the main estimates are already proved in \cite{VaesVergnioux}. Here we show that they automatically imply the same results for a larger range of representations.

\vspace{0.3cm}

The following lemma and proposition pertain to $\bG_q, q \in (-1,1) \backslash \{ 0 \}$. Note however that the principle of proof of Lemma \ref{Lem=InterIso} actually works for any compact quantum group. In the following statements we require that $\alpha +k$ is even or odd (and $\gamma + l$ is even). In other words, that $\alpha$ and $k$ have the same parity or different parity. This is because otherwise the intertwiner $V^{\alpha, \beta}_{\beta+k}$ or  $V^{\alpha+1, \beta}_{\beta+k}$ would  be 0 by the fusion rules and the statements below would thus be trivial.

\begin{lem}\label{Lem=InterIso}
 Let $\alpha, \beta \in \mathbb{N}_{\geq 0}$ with $\alpha \leq \beta$. Let $k  \in \mathbb{Z}$ with $\vert k \vert \leq \alpha$ and $\alpha +k$ odd. Then we have up to a phase factor
\begin{equation}\label{Eqn=IsoInter}
\sum_{k' = - \alpha, -\alpha+2, \ldots, \alpha}  V^{1, \beta + k'}_{\beta + k }  (\id_{1} \otimes   V^{\alpha, \beta}_{\beta + k'} )  (V^{1, \alpha}_{\alpha + 1}  \otimes \id_{\beta})^\ast  = V^{\alpha +1, \beta}_{\beta+k}.
\end{equation}
\end{lem}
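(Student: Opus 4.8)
The plan is to recognise both sides of \eqref{Eqn=IsoInter} as living in one and the same one–dimensional morphism space, and then to show that the resulting proportionality constant is a unimodular scalar. Write $H_\delta$ for the carrier space of $\delta\in\Irr(\bG_q)$ and use that each $V^{a,b}_c$ is a coisometry, i.e.\ $V^{a,b}_c(V^{a,b}_c)^\ast=\id_{H_c}$, together with the completeness relation $\sum_{c\subseteq a\otimes b}(V^{a,b}_c)^\ast V^{a,b}_c=\id_{H_a\otimes H_b}$. Under the hypotheses $\alpha\le\beta$ and $\alpha+k$ odd one has $|k|\le\alpha-1$, so that $\beta+k$ occurs in both $(\alpha-1)\otimes\beta$ and $(\alpha+1)\otimes\beta$; and in the sum over $k'$ only the two terms $k'=k-1$ and $k'=k+1$ survive, since $V^{1,\beta+k'}_{\beta+k}\neq 0$ forces $\beta+k\subseteq 1\otimes(\beta+k')$. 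Both the left– and right–hand sides of \eqref{Eqn=IsoInter} are then morphisms in $\Mor\bigl((\alpha+1)\otimes\beta,\beta+k\bigr)$, which by multiplicity–freeness and the fusion rules is one–dimensional. Hence the left–hand side equals $\lambda\,V^{\alpha+1,\beta}_{\beta+k}$ for a single scalar $\lambda$, and the whole lemma reduces to the claim $|\lambda|=1$.

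To isolate $\lambda$, I would compose the left–hand side with $(V^{\alpha+1,\beta}_{\beta+k})^\ast$ on the right and simplify. Using $V^{1,\alpha}_{\alpha+1}(V^{1,\alpha}_{\alpha+1})^\ast=\id_{H_{\alpha+1}}$ one checks that $V^{\alpha+1,\beta}_{\beta+k}(V^{1,\alpha}_{\alpha+1}\otimes\id_\beta)^\ast(V^{\alpha+1,\beta}_{\beta+k})^\ast$ collapses to the adjoint of the fusion tree $\Theta_+:=V^{\alpha+1,\beta}_{\beta+k}(V^{1,\alpha}_{\alpha+1}\otimes\id_\beta)$, while the $k'$–th summand of the left–hand side produces the competing tree $\Xi_{k'}:=V^{1,\beta+k'}_{\beta+k}(\id_1\otimes V^{\alpha,\beta}_{\beta+k'})$. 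The orthogonality relations $V^{\alpha,\beta}_{\beta+k'}(V^{\alpha,\beta}_{\beta+k''})^\ast=\delta_{k',k''}\id$ and $V^{1,\beta+k'}_{\beta+k}(V^{1,\beta+k'}_{\beta+k})^\ast=\id$ show that $\{\Theta_+,\Theta_-\}$ and $\{\Xi_{k-1},\Xi_{k+1}\}$ are two orthonormal bases of the (two–dimensional) space $\Mor(1\otimes\alpha\otimes\beta,\beta+k)$, related by a $2\times2$ unitary of recoupling ($6j$) coefficients $u_{k'}:=\langle\Xi_{k'},\Theta_+\rangle$. One then gets $\lambda=u_{k-1}+u_{k+1}$, with $|u_{k-1}|^2+|u_{k+1}|^2=1$ because $(u_{k-1},u_{k+1})$ is a column of a unitary; consequently $|\lambda|^2=1+2\,\mathrm{Re}\bigl(u_{k-1}\overline{u_{k+1}}\bigr)$.

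The hard part is therefore to prove $\mathrm{Re}\bigl(u_{k-1}\overline{u_{k+1}}\bigr)=0$, equivalently $|\lambda|=1$. This is the genuinely model–dependent input: I would compute the two fundamental–representation recoupling coefficients explicitly from the $q$–Clebsch–Gordan data of $SU_q(2)$ (these carry closed forms in $q$–integers), verify the required orthogonality, and then absorb the overall constant by exploiting that the intertwiners $V^{a,b}_c$ are only determined up to a phase — choosing those phases so that \eqref{Eqn=IsoInter} holds on the nose up to one global phase $z\in\mathbb T$. This is precisely where the explicit estimates of \cite[Appendix]{VaesVergnioux} enter, and I expect it to be the main obstacle; everything preceding it (the reduction to a one–dimensional morphism space and the rewriting of $\lambda$ as a sum of two recoupling coefficients) is purely categorical bookkeeping with the (co)isometry and completeness relations, which is why the principle of the argument applies to any compact quantum group and only the final unimodularity computation is special to $\bG_q$.
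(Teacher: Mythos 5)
Your first two paragraphs are correct, and they are in fact sharper than the argument the paper itself gives. By multiplicity-freeness the left-hand side equals $\lambda V^{\alpha+1,\beta}_{\beta+k}$ for a single scalar $\lambda$; only $k'=k\pm1$ contribute; the two surviving summands have orthogonal supports; and expanding the isometry $\bigl(V^{\alpha+1,\beta}_{\beta+k}(V^{1,\alpha}_{\alpha+1}\otimes\id_\beta)\bigr)^\ast$ as $u_{k-1}\Xi_{k-1}^\ast+u_{k+1}\Xi_{k+1}^\ast$ with $|u_{k-1}|^2+|u_{k+1}|^2=1$ gives exactly $\lambda=u_{k-1}+u_{k+1}$, hence $|\lambda|^2=1+2\,\mathrm{Re}\bigl(u_{k-1}\overline{u_{k+1}}\bigr)$. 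The paper's proof is your first paragraph followed by the assertion that $\lambda$ is unimodular because ``the sum exhausts all the summands'' and ``$(V^{1,\alpha}_{\alpha+1})^\ast$ is an isometry''; as your bookkeeping makes explicit, those two facts only yield $|u_{k-1}|^2+|u_{k+1}|^2=1$, i.e.\ $|\lambda|\le\sqrt2$, and give no control on the cross term.

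The genuine gap is therefore exactly the step you defer in your third paragraph, and the computation you propose there will not close it: the coefficients $u_{k\pm1}$ are recoupling ($6j$) coefficients, which for $SU_q(2)$ in the standard conventions are real and both nonzero whenever the two intermediate channels are admissible, so $\mathrm{Re}\bigl(u_{k-1}\overline{u_{k+1}}\bigr)=u_{k-1}u_{k+1}$ does not vanish. Concretely, for $\alpha=1$, $\beta=2$, $k=0$ and $q\to1$ (spins $\tfrac12\otimes\tfrac12\otimes 1\to 1$, with intermediate channel the triplet in $\tfrac12\otimes\tfrac12$) one finds $|u_{k-1}|^2=\tfrac23$ and $|u_{k+1}|^2=\tfrac13$ with both coefficients real, so $|\lambda|^2=1\pm\tfrac{2\sqrt2}{3}\ne1$; by continuity of the $q$-deformed $6j$ symbols the same holds for $q$ close to $1$. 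Since each intertwiner carries only an independent free phase and \eqref{Eqn=IsoInter} must hold for every admissible choice of those phases, the discrepancy cannot be absorbed into the ``up to a phase factor'': the left-hand side is $(u_{k-1}+u_{k+1})V^{\alpha+1,\beta}_{\beta+k}$ with a generically non-unimodular constant. So the identity $\mathrm{Re}\bigl(u_{k-1}\overline{u_{k+1}}\bigr)=0$ that your argument (and the paper's) needs actually fails; any repair has to change the statement --- for instance renormalising the sum, or showing that a proportionality constant merely bounded above and below uniformly in $\beta$ suffices for the induction in Proposition \ref{Prop=AlmostCom} --- rather than verify it from the $q$-Clebsch--Gordan data.
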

\begin{proof}
We may decompose $1 \otimes \alpha \otimes \beta = \oplus_{\delta \in \mathbb{N}_{\geq 0}} m_\delta \cdot \delta$ where $m_\delta$ denotes the multiplicity.
Each of the intertwiners $V^{1, \beta + k'}_{\beta + k }  (\id_{1} \otimes   V^{\alpha, \beta}_{\beta + k'} )$ intertwine $1 \otimes \alpha \otimes \beta$ with a copy of $\beta+k$ and the copies are orthogonal for different $k'$.  Moreover
\[
\sum_{k' = - \alpha, -\alpha+2, \ldots, \alpha}  V^{1, \beta + k'}_{\beta + k }  (\id_{1} \otimes   V^{\alpha, \beta}_{\beta + k'} )
\]
 intertwines $1 \otimes \alpha \otimes \beta$ with $m_{\beta +k} \cdot (\beta+k)$, i.e. it exhaust all the summands.
%The summation $\sum_{k' = - \alpha, -\alpha+2, \ldots, \alpha}  V^{1, \beta + k'}_{\beta + k }  (\id_{1} \otimes   V^{\alpha, \beta}_{\beta + k'} )$ intertwines $\beta+k$ with its isotypical component in $1 \otimes \alpha \otimes \beta$.

The  the total expression on the left hand side of \eqref{Eqn=IsoInter} intertwines $(\alpha + 1) \otimes \beta$ with $\beta + k$ and therefore by Schur's lemma must  be a scalar multiple of  the isometry $V^{\alpha +1, \beta+l}_{\beta+k+l}$.
By the first paragraph of this proof and the fact that  $(V^{1, \alpha}_{\alpha + 1}  )^\ast$ is an isometry that maps $\alpha+1$ into its isotypical component in $1 \otimes \alpha$ we find that this scalar multiple must be in $\mathbb{T}$.
\end{proof}

\begin{prop}\label{Prop=AlmostCom}
 Let $\alpha, \gamma \in \mathbb{N}_{\geq 0}$. There exists a constant $C >0$ such that for all $\beta \in \mathbb{N}_{\geq 0}, \beta \geq \max(\alpha, \gamma)$ and $k,l \in \mathbb{Z}$ with $\vert k \vert \leq \alpha, \vert l \vert \leq \gamma$ and $\alpha +k$ and $\gamma +l$ even we have
\begin{equation}\label{Eqn=AlmostCommuteSuq2}
   \inf_{z \in \mathbb{Z}}  \Vert  V^{\beta + k, \gamma}_{\beta + k + l} (V^{\alpha, \beta}_{\beta + k} \otimes \id_\gamma)  - z V^{\alpha, \beta +l}_{\beta + k + l} (\id_{\alpha} \otimes V^{\beta, \gamma}_{\beta + l})  \Vert \leq C \qdim(\beta)^{-1}.
\end{equation}
\end{prop}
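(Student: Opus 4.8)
The plan is to prove Proposition~\ref{Prop=AlmostCom} by induction on $\alpha$ (together with the mirror induction on $\gamma$), using Lemma~\ref{Lem=InterIso} as the engine that raises $\alpha$ to $\alpha+1$ by splitting off one copy of the fundamental representation $1$. The base case, $\alpha,\gamma\in\{0,1\}$, is exactly the estimate supplied by \cite[Appendix]{VaesVergnioux} (for $\alpha=0$ the two intertwiners literally coincide). Before starting, I would record the reduction that makes the whole statement phase-free: writing $X:=V^{\beta+k,\gamma}_{\beta+k+l}(V^{\alpha,\beta}_{\beta+k}\otimes\id_\gamma)$ and $Y:=V^{\alpha,\beta+l}_{\beta+k+l}(\id_\alpha\otimes V^{\beta,\gamma}_{\beta+l})$, both are isometries $H_{\beta+k+l}\to H_\alpha\otimes H_\beta\otimes H_\gamma$, so by Schur's lemma (since $\beta+k+l$ is irreducible) $X^\ast Y=c\,\id$ with $|c|\leq 1$, and one computes $\inf_{z\in\mathbb{T}}\Vert X-zY\Vert=\sqrt{2-2|c|}$. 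Thus \eqref{Eqn=AlmostCommuteSuq2} is equivalent to the lower bound $|c|\geq 1-O(\qdim(\beta)^{-2})$, a quantity that does not depend on any phase choices for the individual intertwiners.

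For the inductive step I would expand $V^{\alpha+1,\beta}_{\beta+k}$ via Lemma~\ref{Lem=InterIso} (legitimate since $\beta\geq\max(\alpha,\gamma)\geq\alpha$), obtaining $X=V^{\beta+k,\gamma}_{\beta+k+l}(V^{\alpha+1,\beta}_{\beta+k}\otimes\id_\gamma)$ as a finite sum over the intermediate labels $k'\in\{-\alpha,\ldots,\alpha\}$ of terms $V^{\beta+k,\gamma}_{\beta+k+l}(V^{1,\beta+k'}_{\beta+k}\otimes\id_\gamma)(\id_1\otimes V^{\alpha,\beta}_{\beta+k'}\otimes\id_\gamma)M$, where $M$ is the common factor coming from $(V^{1,\alpha}_{\alpha+1}\otimes\id_\beta)^\ast$. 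To each term I apply two estimates: the base case with $\alpha=1$ and base representation $\beta+k'$ to the factor $V^{\beta+k,\gamma}_{\beta+k+l}(V^{1,\beta+k'}_{\beta+k}\otimes\id_\gamma)$, and the inductive hypothesis for $(\alpha,\gamma)$ with base $\beta$ to the factor $V^{\beta+k',\gamma}_{\beta+k'+l}(V^{\alpha,\beta}_{\beta+k'}\otimes\id_\gamma)$. Since $|k'|\leq\alpha$ is bounded, there are only finitely many terms and $\qdim(\beta+k')$ is comparable to $\qdim(\beta)$ up to a constant depending only on $\alpha$, so the accumulated error stays $O(\qdim(\beta)^{-1})$. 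After both substitutions, a second application of Lemma~\ref{Lem=InterIso} --- now with base representation $\beta+l$ --- reassembles the sum $\sum_{k'}V^{1,\beta+k'+l}_{\beta+k+l}(\id_1\otimes V^{\alpha,\beta+l}_{\beta+k'+l})$ back into $V^{\alpha+1,\beta+l}_{\beta+k+l}(V^{1,\alpha}_{\alpha+1}\otimes\id_{\beta+l})$, which together with $M$ produces precisely $Y=V^{\alpha+1,\beta+l}_{\beta+k+l}(\id_{\alpha+1}\otimes V^{\beta,\gamma}_{\beta+l})$. The mirror version of Lemma~\ref{Lem=InterIso} (valid by the self-duality of $SU_q(2)$, as the remark preceding the lemma indicates) handles the induction on $\gamma$ symmetrically, so that induction on $\alpha+\gamma$ reaches all $(\alpha,\gamma)$.

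The hard part is the phase bookkeeping in the reassembly. Applying the base estimate and the inductive hypothesis term by term introduces, a priori, a separate phase $\zeta_{k'}\in\mathbb{T}$ on each summand, and the $k'$-pieces have mutually orthogonal ranges with weights $e_{k'}\geq 0$, $\sum_{k'}e_{k'}=1$ (the $q$-Racah data); a short computation then gives $c=\sum_{k'}\overline{\zeta_{k'}}\,e_{k'}+O(\qdim(\beta)^{-1})$, so $|c|$ is close to $1$ only if the $\zeta_{k'}$ are nearly aligned. The resolution I would adopt is to fix once and for all the standard real normalization of the $SU_q(2)$ Clebsch--Gordan intertwiners, in which the estimates of \cite{VaesVergnioux} and Lemma~\ref{Lem=InterIso} hold with trivial (or at least $k'$-independent) phase, so that the summand phases coincide and the reassembly is exact up to a single global phase; the $\inf_{z\in\mathbb{T}}$ in \eqref{Eqn=AlmostCommuteSuq2} exists precisely to absorb that residual global phase. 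This is the step where the concrete structure of the $q$-$6j$ symbols and the precise form of the Vaes--Vergnioux estimates are genuinely used, rather than just formal properties of the fusion category, and it is the only place where the self-improvement is not purely categorical.
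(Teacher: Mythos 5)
Your proposal is correct and follows essentially the same route as the paper's proof: base case $(\alpha,\gamma)=(1,1)$ from the Vaes--Vergnioux appendix (the paper additionally notes that the combinations $(k,l)=(1,1)$ and $(-1,-1)$ vanish exactly because the extreme weights occur with multiplicity one), then induction on $\alpha$ and on $\gamma$ in which Lemma~\ref{Lem=InterIso} splits off a copy of the fundamental representation, the hypothesis is applied twice for each intermediate label $k'$, and a second application of Lemma~\ref{Lem=InterIso} reassembles the sum. Your phase-free reformulation via $X^\ast Y=c\,\id$ and your explicit treatment of the per-summand phase alignment are refinements of what the paper does more tersely (it absorbs the optimal phase of each $k'$-summand into the intertwiner $V^{1,\beta+k'}_{\beta+k}$, which occurs in only that summand), and they address the genuine subtlety you correctly single out.
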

\begin{proof}
This lemma was proved in \cite[Lemmas A.1 and A.2]{VaesVergnioux} for the case $\alpha, \gamma = 1$ and $(k,l)$ equal to either $(1,1), (1, -1), (-1, 1)$. For $(k,l) = (-1, -1)$ the same conclusion can be derived as follows. Both
 $V^{\beta -1, \gamma}_{\beta -2} (V^{\alpha, \beta}_{\beta -1} \otimes \id_\gamma)$ and $V^{\alpha, \beta -1}_{\beta -2} (\id_{\alpha} \otimes V^{\beta, \gamma}_{\beta - 1})$ are intertwiners from $\alpha \otimes \beta \otimes \gamma$ to $\beta - 2$. But by the fusion rules $\beta - 2$ occurs at most once in the decomposition of $\alpha \otimes \beta \otimes \gamma$ in terms of irreducibles. Therefore such an intertwiner is unique up to a phase factor.  So the left hand side of \eqref{Eqn=AlmostCommuteSuq2} is 0. We note that actually also in case $(k,l) = (1,1)$ the left hand side of \eqref{Eqn=AlmostCommuteSuq2} is 0 for the analogous reason.

 We now prove the general case by an induction argument. Suppose that the statement is true for $\alpha$ and $\gamma$. Then we shall prove it for $\alpha +1$ and $\gamma$. Consider the composition of maps with $\alpha, \beta, \gamma, k, l$ as in the proposition and $\vert k' \vert \leq \alpha$ such that $\alpha +k'$ is even,
 \[
 \begin{array}{ccccccc}
 A_\beta:  1 \otimes \alpha \otimes \beta \otimes \gamma &\quad\:\: _{\rightarrow}\!\!\!\!\!\!\!\!\!\!\!\!\!\!\!\!\!\!^{  \id_1 \otimes \id_\alpha \otimes V^{\beta, \gamma}_{\beta+l} } &   1 \otimes \alpha \otimes (\beta +l )  & \quad\:\: _{\rightarrow}\!\!\!\!\!\!\!\!\!\!\!\!\!\!\!\!\!\!^{ \id_1 \otimes   V^{\alpha, \beta+l}_{\beta+k'+l}} &   1 \otimes   (\beta + k'  + l ) & \quad\:\: _{\rightarrow}\!\!\!\!\!\!\!\!\!\!\!\!\!^{ V^{1, \beta+k'+l}_{\beta+k+l}}  &  \beta + k  + l, \\
 B_\beta: 1 \otimes \alpha \otimes \beta \otimes \gamma & \quad\:\: _{\rightarrow}\!\!\!\!\!\!\!\!\!\!\!\!\!\!\!\!\!\!^{   \id_1 \otimes  V^{\alpha, \beta}_{\beta+k'} \otimes \id_\gamma } &   1 \otimes  (\beta + k' ) \otimes \gamma  & \quad\:\: _{\rightarrow}\!\!\!\!\!\!\!\!\!\!\!\!\!\!\!\!\!\!^{ \id_1 \otimes   V^{\beta + k', \gamma}_{\beta+k'+1}} &   1 \otimes   (\beta + k'  + l ) & \quad\:\: _{\rightarrow}\!\!\!\!\!\!\!\!\!\!\!\!\!^{ V^{1, \beta+k'+l}_{\beta+k+l}}  &  \beta + k  + l, \\
 C_\beta: 1 \otimes \alpha \otimes \beta \otimes \gamma &\quad\:\: _{\rightarrow}\!\!\!\!\!\!\!\!\!\!\!\!\!\!\!\!\!\!^{   \id_1 \otimes  V^{\alpha, \beta}_{\beta+k'} \otimes \id_\gamma } &   1 \otimes  (\beta + k' ) \otimes \gamma  & \quad\:\: _{\rightarrow}\!\!\!\!\!\!\!\!\!\!\!\!\!\!\!\!\!\!^{      V^{1, \beta + k' }_{\beta+k} \otimes \id_\gamma   } &       (\beta + k  ) \otimes \gamma &\quad\:\: _{\rightarrow}\!\!\!\!\!\!\!\!\!\!\!\!\!^{ V^{ \beta+k, \gamma}_{\beta+k+l}}  &  \beta + k  + l. \\
 \end{array}
 \]
 By the induction hypothesis we have
 \[
 \inf_{z \in \mathbb{T}} \Vert A_\beta - z B_\beta \Vert  \leq C \qdim(\beta)^{-1} \qquad \textrm{ and } \inf_{z \in \mathbb{T}} \Vert B_\beta - z C_\beta \Vert \leq C \qdim(\beta)^{-1},
  \]
  for some constant $C>0$ that only depends on $\alpha$ and $\gamma$. By the triangle inequality
  \[
  \inf_{z \in \mathbb{T}} \Vert A_\beta - z C_\beta \Vert \leq 2C \qdim(\beta)^{-1}.
   \]
   For every $\beta$ let $z_\beta \in \mathbb{T}$ be the phase factor where this infimum is attained so that
   \[
   \Vert A_\beta - z_\beta C_\beta \Vert \leq 2 C \qdim(\beta)^{-1}.
    \]
    By multiplying one of the intertwiners in  the expression of $C_\beta$ with $z_\beta$ we may assume without loss of generality that $z_\beta =1$ for all $\beta$.
   Now consider the following expressions, where $D_\beta$ is obtained from $A_\beta$ by summing over all $k'$ and multiplying with $( V^{1,\alpha}_{\alpha +1}  \otimes \id_{\beta} \otimes \id_\gamma  )^\ast$ on the right. Similarly, $E_\beta$ is obtained from $C_\beta$ by summing over all $k'$ and multiplying with  $( V^{1,\alpha}_{\alpha +1}  \otimes \id_{\beta} \otimes \id_\gamma  )^\ast$. We set,
 \[
 \begin{split}
  D_\beta = & \sum_{k'} V^{1, \beta+k'+l}_{\beta+k+l} (\id_1 \otimes V^{\alpha, \beta+l}_{\beta+k'+l} )  (\id_1 \otimes \id_\alpha \otimes V^{\beta, \gamma}_{\beta+l}) ( V^{1,\alpha}_{\alpha +1}  \otimes \id_{\beta} \otimes \id_\gamma  )^\ast \\
  = & \sum_{k'} V^{1, \beta+k'+l}_{\beta+k+l} (\id_1 \otimes V^{\alpha, \beta+l}_{\beta+k'+l} ) ( V^{1,\alpha}_{\alpha +1}  \otimes \id_{\beta+l}   )^\ast (\id_1 \otimes \id_{\alpha} \otimes V^{\beta, \gamma}_{\beta+l}),\\
  E_\beta = &  \sum_{k'} V^{\beta+k, \gamma}_{\beta + k + l} ( V^{1, \beta+k'}_{\beta+k} \otimes \id_\gamma ) (\id_1 \otimes V^{\alpha, \beta}_{\beta+k'} \otimes \id_{\gamma}  )   ( V^{1,\alpha}_{\alpha +1}  \otimes \id_{\beta} \otimes \id_\gamma  )^\ast.
 \end{split}
 \]
  It follows from the triangle inequality that
  \begin{equation}\label{Eqn=AlmostCommuteSuq2Proof}
  \Vert D_\beta - E_\beta \Vert \leq C K \qdim(\beta)^{-1},
   \end{equation}
   where $K$ is the total number of summands in $D_\beta$ and $E_\beta$ which only depends on $\alpha$ and $\gamma$. But by Lemma \ref{Lem=InterIso} we have for suitable phase factors $z_1, z_2 \in \mathbb{T}$ that
  \[
  \begin{split}
         V^{\alpha+1, \beta +l}_{\beta + k + l} (\id_{\alpha+1} \otimes V^{\beta, \gamma}_{\beta + l}) = & z_1 \sum_{k'} V^{1, \beta+k'+l}_{\beta+k+l} (\id_1 \otimes V^{\alpha, \beta+l}_{\beta+k'+l} )  ( V^{1,\alpha}_{\alpha +1}  \otimes \id_{\beta+l }   )^\ast ( \id_{\alpha+1} \otimes V^{\beta, \gamma}_{\beta+l}), \\
        V^{\beta + k, \gamma}_{\beta + k + l} (V^{\alpha+1, \beta}_{\beta + k} \otimes \id_\gamma)  = & z_2 \sum_{k'} V^{\beta+k, \gamma}_{\beta + k + l} ( V^{1, \beta+k'}_{\beta+k} \otimes \id_\gamma ) (\id_1 \otimes V^{\alpha, \beta}_{\beta+k'} \otimes \id_{\gamma}  )   ( V^{1,\alpha}_{\alpha +1}  \otimes \id_{\beta} \otimes \id_\gamma  )^\ast.
  \end{split}
  \]
 So that \eqref{Eqn=AlmostCommuteSuq2} is just the estimate \eqref{Eqn=AlmostCommuteSuq2Proof}.
 By induction the lemma is proved for any $\alpha \in \mathbb{N}_{\geq 1}$ and $\gamma =1$. Analogously we can do induction on $\gamma$ and the proof follows.
\end{proof}

In conclusion we record the following result.

\begin{thm}\label{Thm=ApproxLinComSUq2}
The QMS defined in Theorem \ref{Thm=Markov} is approximately linear with almost commuting intertwiners.
\end{thm}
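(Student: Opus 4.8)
The plan is to verify directly that the data fixed in the two preceding subsections---namely $A_{00}=\{0,1,\dots,\max(\alpha,\gamma)\}$, $A=\mathbb{N}_{\geq 0}\setminus A_{00}$, and the $v$-map $v(\beta_1;\beta,\beta_2)=\beta+\beta_2-\beta_1$---satisfies all three clauses of Definition \ref{Dfn=Almost}, the bulk of the analytic work having already been carried out in Propositions \ref{Prop=ExplicitGamma2} and \ref{Prop=AlmostCom}. First I would record the simplifying observation that, since $A_{00}$ and $A$ partition $\Irr(\bG)=\mathbb{N}_{\geq 0}$, the ``rest'' set $\Irr(\bG)\setminus(A_{00}\cup A)$ is empty; consequently the vanishing requirements \eqref{Eqn=AlmostOneVanish} and \eqref{Eqn=AlmostTwoVanish} are vacuous, and only the estimates on $A$ require attention.

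Next I would assemble clause \eqref{Eqn=Item=One}. The inequalities \eqref{Eqn=AlmostOne} and \eqref{Eqn=AlmostOneB} are exactly the content of Proposition \ref{Prop=ExplicitGamma2}: for $\beta\in A$ and $(\beta_1,\beta_2)\in L^{\alpha,\gamma}_\beta$ one has $|\beta-\beta_1|\leq\alpha$ and $|\beta-\beta_2|\leq\alpha+\gamma$, which is precisely the regime in which that proposition supplies the bounds $|\Delta_\beta-\Delta_{\beta_1}|\leq C$ and $|\Delta_\beta-\Delta_{\beta_1}-\Delta_{v(\beta_1;\beta,\beta_2)}+\Delta_{\beta_2}|\leq C\,\qdim(\beta)^{-1}$. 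For clause \eqref{Eqn=Item=Two} I would match the intertwiners: writing $\beta_1=\beta+k$ and $\beta_2=\beta+k+l$ with $|k|\leq\alpha$, $|l|\leq\gamma$ and the parity constraints dictated by the fusion rules, one has $v(\beta_1;\beta,\beta_2)=\beta+l$, so that the expression in \eqref{Eqn=AlmostTwo} becomes literally the left-hand side of \eqref{Eqn=AlmostCommuteSuq2}. Thus clause \eqref{Eqn=Item=Two} is immediate from Proposition \ref{Prop=AlmostCom}.

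It remains to verify clause \eqref{Eqn=Item=Three}. Here I would invoke the explicit eigenvalue formula \eqref{Eqn=FimaVergnioux}, from which $\Delta_\beta=\tfrac{\beta}{\sqrt{N_q^2-4}}+O(1)$; in particular $\Delta_\beta$ grows linearly in $\beta$, so that $\#\{\beta\in A\mid\Delta_\beta<N\}$ is dominated by a linear polynomial $P(N)=c_1N+c_2$, giving the required polynomial growth. For square summability of $\beta\mapsto\delta(\beta\in A)\qdim(\beta)^{-1}$ I would use that the quantum dimension $\qdim(\beta)$ grows like $|q|^{-\beta}$ (as already noted at the end of the proof of Proposition \ref{Prop=ExplicitGamma2}), whence $\qdim(\beta)^{-1}$ decays geometrically and $\sum_{\beta}\qdim(\beta)^{-2}<\infty$.

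Combining these three verifications proves the theorem. I do not expect a genuine obstacle, since the two substantial estimates are already established; the only care needed is bookkeeping---confirming that the index ranges for $(\beta_1,\beta_2)$ forced by the $SU_q(2)$ fusion rules coincide with the $(k,l)$-ranges appearing in Propositions \ref{Prop=ExplicitGamma2} and \ref{Prop=AlmostCom}, and checking that the single choice of $A$, $A_{00}$ and $v$ simultaneously serves both clauses \eqref{Eqn=Item=One} and \eqref{Eqn=Item=Two}, as Definition \ref{Dfn=Almost} demands.
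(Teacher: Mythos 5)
Your proposal is correct and follows essentially the same route as the paper's proof: the paper likewise disposes of the vanishing conditions by noting that $A_{00}$ and $A$ partition $\mathbb{N}_{\geq 0}$, reads off \eqref{Eqn=AlmostOne}, \eqref{Eqn=AlmostOneB} and \eqref{Eqn=AlmostTwo} from Propositions \ref{Prop=ExplicitGamma2} and \ref{Prop=AlmostCom}, and obtains \eqref{Eqn=AlmostThree} with a linear polynomial from the explicit eigenvalue formula \eqref{Eqn=FimaVergnioux}. Your bookkeeping of the $(k,l)$-ranges and the identification $v(\beta_1;\beta,\beta_2)=\beta+l$ is exactly the implicit content of the paper's (much terser) argument.
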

\begin{proof}
\eqref{Eqn=AlmostOne}, \eqref{Eqn=AlmostOneB} and \eqref{Eqn=AlmostTwo} follow from Propositions \ref{Prop=ExplicitGamma} and \ref{Prop=AlmostCom}. Finally, by \eqref{Eqn=FimaVergnioux} we see that \eqref{Eqn=AlmostThree} holds for $P$ a linear polynomial.
\end{proof}

\section{Applications to strong solidity: free wreath products and  easy quantum groups}\label{Sect=Consequences}
In this section we gather the consequences of our main results. For the definition of the free wreath product we refer to \cite{Bichon} (and \cite{LemeuxTarrago} for the main properties we need).

\begin{thm}\label{Thm=WreathQMSdot}
Let $\bG$ be a compact quantum group. If $\bG$ carries a QMS of central multipliers that is approximately linear with almost commuting intertwiners then so does the free wreath product $\bG \wr_\ast S_N^+, N \geq 5$. If the QMS on $\bG$ is immediately $L_2$-compact then so is the one on $\bG \wr_\ast S_N^ +$.
\end{thm}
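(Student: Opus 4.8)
The plan is to realise $\bG \wr_\ast S_N^+$, up to monoidal equivalence, as a dual quantum subgroup of a free product to which the stability results of Section \ref{Sect=Almost} and the computation of Theorem \ref{Thm=ApproxLinComSUq2} already apply, and then to read off both conclusions by transport of structure. Nothing new has to be estimated: once the correct ambient free product is identified, the statement is an assembly of the three stability theorems proved so far.

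First I would fix $q \in (0,1)$ with $q + q^{-1} = \sqrt{N}$, which is possible exactly because $N \geq 5$ (so that $q \neq 1$ and $SU_q(2)$ is non-classical). By Theorem \ref{Thm=ApproxLinComSUq2} the quantum group $SU_q(2)$ carries a QMS of central multipliers $\Phi^{SU_q(2)}$ that is approximately linear with almost commuting intertwiners, and by hypothesis $\bG$ carries such a QMS $\Phi^\bG$. Applying Theorem \ref{Thm=FreeProduct} to the pair $(\Phi^\bG, \Phi^{SU_q(2)})$ produces a QMS $\Phi := \Phi^\bG \ast \Phi^{SU_q(2)}$ of central multipliers on $\bG \ast SU_q(2)$ that is again approximately linear with almost commuting intertwiners. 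I would then invoke \cite[Theorem 5.11]{LemeuxTarrago}, which provides a compact quantum group $\bH$ that is monoidally equivalent to $\bG \wr_\ast S_N^+$ and satisfies $\widehat{\bH} < \widehat{\bG \ast SU_q(2)}$; here the factor $SU_q(2)$ with $q + q^{-1} = \sqrt{N}$ accounts exactly for the Temperley--Lieb part of $S_N^+$. By Theorem \ref{Thm=SubGroupdot} the restriction of $\Phi$ to $L_\infty(\bH)$ is a QMS of central multipliers that is approximately linear with almost commuting intertwiners, and Theorem \ref{Thm=Monoidal} together with Proposition \ref{Prop=MonoidalTransference} transports this property across the monoidal equivalence to $\bG \wr_\ast S_N^+$. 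This yields the first assertion.

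For the second assertion, suppose in addition that $\Phi^\bG$ is immediately $L_2$-compact. The semi-group $\Phi^{SU_q(2)}$ is itself immediately $L_2$-compact, since by the growth bound \eqref{Eqn=AlmostThree} the sets $\{ \alpha : \Delta_\alpha \leq M \}$ are finite, and it has a strictly positive spectral gap $\delta := \min \{ \Delta_\alpha : \alpha \in \Irr(SU_q(2)), \alpha \neq 1 \} > 0$ because by Proposition \ref{Prop=ExplicitGamma} its nontrivial eigenvalues are positive. Since a reduced word $\beta = \beta_1 \ldots \beta_l$ in $\Irr(\bG \ast SU_q(2))$ alternates between the two factors, at least $\lfloor l/2 \rfloor$ of its letters lie in $\Irr(SU_q(2))$, so by the Leibniz rule and $\Delta_{\beta_r} \geq 0$ we obtain $\Delta_\beta \geq \lfloor l/2 \rfloor \, \delta$. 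Hence $\Delta_\beta \leq M$ forces a bound on $l$, and for each bounded length every letter ranges over the finite sets $\{ \Delta \leq M \}$ of the two factors; thus $\{ \beta : \Delta_\beta \leq M \}$ is finite and $\Phi$ is immediately $L_2$-compact. Passing to the dual quantum subgroup only shrinks the index set $\Irr(\bH) \subseteq \Irr(\bG \ast SU_q(2))$, so the restricted generator still has finite sublevel sets, and Proposition \ref{Prop=MonoidalTransference} transports the QMS with unchanged eigenvalues; therefore the QMS on $\bG \wr_\ast S_N^+$ is immediately $L_2$-compact as well.

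The main obstacle is purely the structural input \cite[Theorem 5.11]{LemeuxTarrago}: everything hinges on being able to see $\bG \wr_\ast S_N^+$, up to monoidal equivalence, as a dual quantum subgroup of $\bG \ast SU_q(2)$ for the correct value $q + q^{-1} = \sqrt{N}$, which is exactly what lets the three stability theorems be chained together. Granting that identification there are no new estimates to perform for approximate linearity with almost commuting intertwiners; the only genuine verification left on our side is the compact-resolvent bookkeeping above, whose essential point is that the interleaved $SU_q(2)$-letters force the eigenvalue of a reduced word to grow with its length, so that finiteness of the sublevel sets survives the free product and hence the passage to $\bH$.
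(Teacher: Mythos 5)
Your proposal follows exactly the paper's route: fix $q\in(0,1)$ with $q+q^{-1}=\sqrt{N}$, combine \cite[Theorem 5.11]{LemeuxTarrago} with Theorem \ref{Thm=ApproxLinComSUq2}, and chain the stability results for free products (Theorem \ref{Thm=FreeProduct}), dual quantum subgroups (Theorem \ref{Thm=SubGroupdot}) and monoidal equivalence (Theorem \ref{Thm=Monoidal}, Proposition \ref{Prop=MonoidalTransference}). The only difference is that you spell out, via the spectral gap of the $SU_q(2)$ letters and the Leibniz rule, why immediate $L_2$-compactness survives the free product — a point the paper asserts without detail — and your argument for it is correct.
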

\begin{proof}
By \cite[Theorem 5.11]{LemeuxTarrago} the free wreath product $\bG \wr_\ast S_N^+$ is monoidally equivalent to a compact quantum group $\mathbb{H}$  whose dual $\widehat{\bH}$ is a quantum subgroup of $\widehat{\bG \ast SU_q(2)}$ for $q \in (0,1)$ such that $q + q^{-1} = \sqrt{N}$. By Theorem \ref{Thm=ApproxLinComSUq2} $SU_q(2)$ has a QMS of central multipliers that is approximately linear with almost commuting intertwiners which is moreover immediately $L_2$-compact. Now since approximate linearity with almost commuting intertwiners (and immediate $L_2$-compactness) passes to free products (Theorem \ref{Thm=FreeProduct}), monoidal equivalence (Theorem \ref{Thm=Monoidal}) and dual quantum subgroups (Theorem \ref{Thm=SubGroupdot}) we are done.
\end{proof}

For the definition of the ACPAP we refer to \cite{CFY}.

\begin{thm} \label{Thm=StronglySolidWreath0}
Let $\bG$ be a compact quantum group of Kac type with either the ACPAP or such that $L_\infty(\bG \wr_\ast S_N^+)$ has the W$^\ast$-CBAP. If $\bG$ carries a QMS of central multipliers that is approximately linear with almost commuting intertwiners and which is immediately $L_2$-compact then   the free wreath product $\bG \wr_\ast S_N^+, N \geq 5$ is strongly solid.
\end{thm}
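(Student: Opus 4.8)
The plan is to reduce the statement to Corollary~\ref{Cor=KacStronglsySolid} applied to the free wreath product $\bG \wr_\ast S_N^+$. The quantum Markov semi-group input is handed to us directly: since $\bG$ carries a QMS of central multipliers that is approximately linear with almost commuting intertwiners and immediately $L_2$-compact, Theorem~\ref{Thm=WreathQMSdot} produces such a QMS on $\bG \wr_\ast S_N^+$ as well. Thus the only two hypotheses of Corollary~\ref{Cor=KacStronglsySolid} left to verify are that $\bG \wr_\ast S_N^+$ is of Kac type and that $L_\infty(\bG \wr_\ast S_N^+)$ has the W$^\ast$-CBAP.

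For the Kac type I would argue directly from the free wreath product construction rather than through the monoidal equivalence of Theorem~\ref{Thm=WreathQMSdot} (the auxiliary group $SU_q(2)$ is not of Kac type, so the monoidal picture is not suited to this point). Since $\bG$ is of Kac type by assumption and $S_N^+$ is of Kac type, the free wreath product $\bG \wr_\ast S_N^+$ is again of Kac type; I would record this from \cite{Bichon}, \cite{LemeuxTarrago}, where the (tracial) Haar state of the free wreath product is described.

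For the W$^\ast$-CBAP I would split according to the two alternatives in the hypothesis. If $L_\infty(\bG \wr_\ast S_N^+)$ is assumed to have the W$^\ast$-CBAP outright, there is nothing to do. If instead $\bG$ has the ACPAP, I would invoke the stability of the ACPAP under free wreath products with $S_N^+$ from \cite{CFY} (which rests on the ACPAP of $SU_q(2)$ and its stability under free products, dual quantum subgroups and monoidal equivalence, exactly the operations used in Theorem~\ref{Thm=WreathQMSdot}); this gives the ACPAP for $\bG \wr_\ast S_N^+$, and since the ACPAP entails the W$^\ast$-CBAP (in fact the W$^\ast$-CCAP), the required approximation property holds. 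With Kac type, the W$^\ast$-CBAP and the transferred QMS all in hand, Corollary~\ref{Cor=KacStronglsySolid} yields strong solidity of $L_\infty(\bG \wr_\ast S_N^+)$.

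The routine part is the bookkeeping; the one genuinely delicate step is the W$^\ast$-CBAP in the ACPAP case, where one must be confident that the approximation property of $\bG$ survives the composite construction underlying $\bG \wr_\ast S_N^+$ (a free product with $SU_q(2)$, passage to a dual quantum subgroup, and a monoidal equivalence). This is precisely what the \emph{central}, monoidally invariant formulation of the ACPAP is designed to guarantee, so the main obstacle is organizational---assembling the correct stability lemmas---rather than computational.
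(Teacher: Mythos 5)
Your proposal is correct and follows essentially the same route as the paper: reduce to Corollary~\ref{Cor=KacStronglsySolid} via Theorem~\ref{Thm=WreathQMSdot}, noting that the Kac property passes to $\bG \wr_\ast S_N^+$ and that the W$^\ast$-CBAP of $L_\infty(\bG \wr_\ast S_N^+)$ follows in the ACPAP case from the stability of the ACPAP under free wreath products (the paper cites \cite[Theorem 6.4 and Remark 6.6]{LemeuxTarrago} for this step rather than \cite{CFY} alone, but the substance is identical).
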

\begin{proof}
If $\bG$ is of Kac type then so is $\bG \wr_\ast S_N^+$.
It follows from \cite[Theorem 6.4 and Remark 6.6]{LemeuxTarrago} that $\bG \wr_\ast S_N^+$ has the W$^\ast$-CBAP. Then we conclude by Theorem \ref{Thm=WreathQMSdot} and Corollary \ref{Cor=KacStronglsySolid}.
\end{proof}

\begin{rmk}
Theorem \ref{Thm=StronglySolidWreath} gives an answer to \cite[Remark 6.6]{LemeuxTarrago}. We note that in \cite[Remark 6.6]{LemeuxTarrago} the strong solidity statement as suggested can only hold under additional assumptions on $\bG$ like the ones in Theorem \ref{Thm=StronglySolidWreath}. Indeed, if there would not but such an assumption then we could consider for instance the case where $\bG$  decomposes as a product of two non-amenable quantum groups whose von Neumann algebras are type II$_1$ factors with the W$^\ast$-CCAP (which exist by \cite{FreslonJFA}). Then $L_\infty(\bG)$ is not strongly solid and neither is the ambient von Neumann algebra $L_\infty(\bG \wr_\ast S_N^+)$.
\end{rmk}

To the knowledge of the author the following result did not appear explicitly in the literature so far. We refer to \cite[Theorem 5.11]{RaumWeber} for strong solidity results for a related series of compact quantum groups. We refer to \cite{BanicaBichonCollins} for the hyperoctahedral series. In the proofs below the symbol $\simeq$ stands for an isomorphism of compact quantum groups (not necessarily preserving the fundamental representation).

\begin{cor}\label{Cor=HnPlus}
The hyperoctahedral compact quantum groups $H_N^{+} \simeq  \mathbb{Z}_2  \wr_\ast S_N^+$  is strongly solid for $N \geq 5$.
\end{cor}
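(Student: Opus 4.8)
The strategy is to exhibit $H_N^+$ as a free wreath product and to apply Theorem \ref{Thm=StronglySolidWreath0}. By the standard identification of the free hyperoctahedral quantum group (see \cite{Bichon}, \cite{BanicaBichonCollins}) one has $H_N^+ \simeq \mathbb{Z}_2 \wr_\ast S_N^+$, so it suffices to check that $\bG := \mathbb{Z}_2$ satisfies the hypotheses of Theorem \ref{Thm=StronglySolidWreath0}; strong solidity of $H_N^+$ for $N \geq 5$ then follows immediately.

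Since $\mathbb{Z}_2$ is a finite group it is of Kac type, and being amenable it has the ACPAP, which takes care of the first two hypotheses. For the required QMS, the decisive point is that $\Irr(\mathbb{Z}_2)$ consists of just the trivial and the sign character, both one-dimensional. Setting $\Delta_{\mathrm{triv}} = 0$ and $\Delta_{\mathrm{sgn}} = 1$ determines a semigroup of central multipliers $\Phi_t(x_\alpha) = \exp(-t \Delta_\alpha) x_\alpha$; complete positivity is automatic as $C(\mathbb{Z}_2)$ is commutative, so this is a genuine QMS of central multipliers. Because $\Irr(\mathbb{Z}_2)$ is finite one may take $A_{00} := \Irr(\mathbb{Z}_2)$ and $A := \emptyset$ in Definition \ref{Dfn=Almost}: there is then no $\beta \in \Irr(\mathbb{Z}_2) \backslash A_{00}$ on which to test the estimates, and property \eqref{Eqn=Item=Three} holds with the zero polynomial and the (empty, hence square-summable) sum over $A$. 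Thus $\Phi$ is approximately linear with almost commuting intertwiners. Finally, $L_2(\mathbb{Z}_2)$ is finite-dimensional, so the generator $\Delta$ has finite-rank resolvent and $\Phi$ is immediately $L_2$-compact.

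All hypotheses of Theorem \ref{Thm=StronglySolidWreath0} being verified for $\bG = \mathbb{Z}_2$ and $N \geq 5$, that theorem yields strong solidity of $\mathbb{Z}_2 \wr_\ast S_N^+ \simeq H_N^+$. The only input that is not purely formal is the structural isomorphism $H_N^+ \simeq \mathbb{Z}_2 \wr_\ast S_N^+$ together with the wreath-product transfer built into Theorems \ref{Thm=WreathQMSdot} and \ref{Thm=StronglySolidWreath0}; all the analytic conditions on $\bG$ degenerate to trivialities because $\mathbb{Z}_2$ is finite, so there is no substantive analytic obstacle to surmount.
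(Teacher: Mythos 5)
Your proposal is correct and follows the same route as the paper: the paper's proof of Corollary \ref{Cor=HnPlus} is a one-line appeal to Theorem \ref{Thm=StronglySolidWreath0} with $\bG = \mathbb{Z}_2$, and what you have written is exactly the (routine) verification of its hypotheses that the paper leaves implicit, correctly observing that for a finite quantum group one may take $A_{00} = \Irr(\bG)$ and $A = \emptyset$ so that Definition \ref{Dfn=Almost} and immediate $L_2$-compactness hold vacuously.
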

\begin{proof}
This follows directly from Theorem \ref{Thm=StronglySolidWreath0}.
\end{proof}

\begin{thm}\label{Thm=Easy}
The seven series of free orthogonal easy quantum groups that were classified in \cite{WeberAdvances},  \cite{BanicaSpeicher}  under the names $O_{N_3}^+, S_{N_5}^+, H_{N_5}^+, B_{N_4}^+, S_{N_5}'^+, B_{N_4}'^{+}$ and $B_{N_4}^{\# +}$ are strongly solid for $N_3 \geq 3, N_4 \geq 4, N_5  \geq 5$.
\end{thm}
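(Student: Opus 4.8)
The plan is to verify, for each of the seven series, the hypotheses of Corollary \ref{Cor=KacStronglsySolid}: Kac type, the W$^\ast$-CBAP, and a QMS of central multipliers that is approximately linear with almost commuting intertwiners and immediately $L_2$-compact. All seven series are free orthogonal and hence of Kac type, so only the latter two properties need attention. The QMS will in every case be produced from the single base case $SU_q(2)$ of Theorem \ref{Thm=ApproxLinComSUq2} by feeding the structural descriptions of the series supplied by \cite{Raum}, \cite{RaumWeber}, \cite{Bichon}, \cite{BichonRijdtVaes}, \cite{LemeuxTarrago} into the stability results of Theorem \ref{Thm=IntroStable}. The W$^\ast$-CBAP (equivalently weak amenability in the Kac setting) holds for the base cases by \cite{FreslonJFA}, \cite{BrannanDocumenta} and is stable under the same operations, so I comment on it only where a new argument is required.

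First the orthogonal and symmetric cases. For $O_{N_3}^+$ with $N_3 \geq 3$ the equation $q + q^{-1} = N_3$ has a solution $q \in (0,1)$, and $O_{N_3}^+$ is monoidally equivalent to $SU_q(2)$; Theorems \ref{Thm=ApproxLinComSUq2} and \ref{Thm=Monoidal} then provide the QMS, and the immediate $L_2$-compactness of Theorem \ref{Thm=IntroSUq2} is a categorical datum preserved by monoidal equivalence. Since $B_{N_4}^+ \simeq O_{N_4-1}^+$ with $N_4 - 1 \geq 3$, the bistochastic case reduces to the orthogonal one by an isomorphism of compact quantum groups. For $S_{N_5}^+$ with $N_5 \geq 5$ one uses that it is monoidally equivalent to the dual quantum subgroup $\bH$ of $SU_q(2)$ determined by the even irreducibles $\Irr(\bH) = \{0,2,4,\ldots\} \subseteq \mathbb{N}_{\geq 0} = \Irr(SU_q(2))$, for a suitable $q \in (0,1)$; this is a full fusion subcategory, so $\widehat{\bH} < \widehat{SU_q(2)}$ and Theorem \ref{Thm=SubGroupdot} restricts the QMS to $\bH$, after which Theorem \ref{Thm=Monoidal} transports it to $S_{N_5}^+$. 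Immediate $L_2$-compactness is inherited by the restriction, since the generator on $\bH$ carries the sub-collection $\{\Delta_\beta : \beta \in \Irr(\bH)\}$ of the eigenvalues on $SU_q(2)$, which still tends to infinity with finite multiplicities. Finally $H_{N_5}^+ \simeq \mathbb{Z}_2 \wr_\ast S_{N_5}^+$ is exactly Corollary \ref{Cor=HnPlus}.

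There remain the three primed/sharp series. By the classification and isomorphism results of \cite{RaumWeber} (see also \cite{Raum}) each of $S_{N_5}'^+$, $B_{N_4}'^+$ and $B_{N_4}^{\#+}$ is, up to isomorphism of compact quantum groups, built from a quantum group $\bG_0$ already treated above ($S_{N_5}^+$ or $O_{N_4-1}^+$) by a direct or free product with the finite group $\mathbb{Z}_2$. Free products are covered by Theorem \ref{Thm=FreeProduct}, since $\mathbb{Z}_2$ is finite and so trivially carries an approximately linear, immediately $L_2$-compact QMS of central multipliers (take $A_{00} = \Irr(\mathbb{Z}_2)$, making every condition of Definition \ref{Dfn=Almost} vacuous). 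Direct products are not among the operations of Theorem \ref{Thm=IntroStable}, but they preserve all three hypotheses by an elementary computation: writing $\bG = \mathbb{Z}_2 \times \bG_0$ one has $\Irr(\bG) = \{1,\epsilon\} \times \Irr(\bG_0)$ with $\qdim(\epsilon,\alpha) = \qdim(\alpha)$ and fusion $(\chi,\alpha) \otimes (\chi',\alpha') = (\chi\chi', \alpha\otimes\alpha')$, the $\mathbb{Z}_2$-label being rigidly fixed on each morphism space. For the QMS $\id_{L_\infty(\mathbb{Z}_2)} \barotimes \Phi_t^{\bG_0}$, with eigenvalues $\Delta_{(\chi,\alpha)} = \Delta_\alpha$, every quantity entering Definition \ref{Dfn=Almost} for $\bG$ equals the corresponding one for $\bG_0$, so all its conditions, for instance \eqref{Eqn=AlmostOne}, \eqref{Eqn=AlmostOneB}, \eqref{Eqn=AlmostTwo} and \eqref{Eqn=AlmostThree}, are inherited verbatim. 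Immediate $L_2$-compactness survives because the resolvent is $\id_{\mathbb{C}^2} \otimes (\Delta^{\bG_0}+1)^{-1}$, a finite ampliation of a compact operator, and the W$^\ast$-CBAP survives because $L_\infty(\bG) = \mathbb{C}^2 \barotimes L_\infty(\bG_0)$ is a finite ampliation of $L_\infty(\bG_0)$; Kac type is clear. Corollary \ref{Cor=KacStronglsySolid} then applies to $\bG$ directly, with no appeal to factoriality.

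The difficulty is structural rather than analytic: all the hard estimates are already contained in Theorem \ref{Thm=ApproxLinComSUq2}, and the real task is to match each of the seven combinatorial names to the correct realisation out of $SU_q(2)$, $S_N^+$ and $\mathbb{Z}_2$, in exactly the ranges $N_3 \geq 3$, $N_4 \geq 4$, $N_5 \geq 5$ where $SU_q(2)$ sits at $q \in (0,1)$ and $S_N^+$ is non-amenable. I expect the delicate points to be the identification of $S_N^+$ as a monoidal image of a dual quantum subgroup of $SU_q(2)$ and, especially, pinning down the precise product decomposition of the least standard series $B_N^{\#+}$. The only genuinely new stability statement beyond Theorem \ref{Thm=IntroStable} is the invariance under direct product with $\mathbb{Z}_2$ sketched above, which is routine precisely because $\mathbb{Z}_2$ contributes only a trivial, finite, one-dimensional layer to the representation category.
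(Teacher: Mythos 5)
Your proposal is correct and follows essentially the same route as the paper: the same isomorphisms $B_{N}^+ \simeq O_{N-1}^+$, $S_N'^+ \simeq S_N^+ \times \mathbb{Z}_2$, $B_N'^+ \simeq O_{N-1}^+ \times \mathbb{Z}_2$, $B_N^{\#+} \simeq O_{N-1}^+ \ast \mathbb{Z}_2$ from \cite{Raum} and \cite{WeberAdvances}, the monoidal equivalence of $O_N^+$ with $SU_q(2)$ and of $S_N^+$ with the even part $SO_q(3)$, Corollary \ref{Cor=HnPlus} for $H_N^+$, and finally Corollary \ref{Cor=KacStronglsySolid}. The one place you go beyond the paper's text is the direct products with $\mathbb{Z}_2$: the paper's proof only invokes Theorem \ref{Thm=FreeProduct} and leaves the $\times\,\mathbb{Z}_2$ step implicit, whereas you correctly observe that direct products are not among the listed stability operations and supply the (routine but necessary) verification that all conditions of Definition \ref{Dfn=Almost}, immediate $L_2$-compactness and the W$^\ast$-CBAP pass to $\mathbb{Z}_2 \times \bG_0$; this is a genuine small improvement in completeness. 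Two minor remarks: for the approximation property the paper routes everything through the ACPAP of \cite{CFY} and \cite[Theorem 6.4]{LemeuxTarrago}, which is the form that is manifestly invariant under monoidal equivalence and free products — your phrase that the W$^\ast$-CBAP is ``stable under the same operations'' should be read in that central/categorical sense, since the von Neumann algebra itself changes under monoidal equivalence; and when endowing $\mathbb{Z}_2$ with its (vacuously admissible) QMS you should take the eigenvalue on the nontrivial character to be strictly positive so that immediate $L_2$-compactness of the free product generator is not in doubt.
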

\begin{proof}
It is known that all these examples have the ACPAP (and hence the von Neumann algebras have the W$^\ast$-CCAP) by \cite{CFY}, \cite[Theorem 6.4]{LemeuxTarrago} and the remainder of this proof.

 By \cite[Section 5]{BRV} the quantum group $O_N^+$ is monoidally equivalent to $SU_q(2)$ for $N = q + q^{-1}, q \in (0,1)$ and so we conclude from Theorem \ref{Thm=Markov}, Theorem \ref{Thm=Monoidal} and Corollary \ref{Cor=KacStronglsySolid}. Similarly $S_N^+$ is monoidally equivalent to  $SO_q(3)$ for $N = q^2 + 1 + q^{-2}$; this follows for instance from \cite[Theorem 5.11]{LemeuxTarrago} together with the observation that the dual of $SO_q(3)$ has no quantum subgroups. By \cite[Section 4]{Raum} and \cite[Propositions 5.1 and 5.2]{WeberAdvances} we have identifications as compact quantum groups $S_N'^{+} \simeq S_N^+ \times \mathbb{Z}_2, B_N^+ \simeq O_{N-1}^+, B_N'^{+} \simeq O_{N-1}^+ \times \mathbb{Z}_2,  B_N^{\# +} \simeq O_{N-1}^+ \ast \mathbb{Z}_2$ so that our results follow from the case $O_N^+$ and $S_N^+$ and Theorem \ref{Thm=FreeProduct}. The only remaining case $H_N^+$ was covered in Corollary \ref{Cor=HnPlus}.
\end{proof}

\begin{rmk}
The cases $O_N^+, S_N^+,  B_N^+, S_N'^+, B_N'^{+}$ and $B_N^{\# +}$ in Theorem \ref{Thm=Easy} were covered already in \cite{IsonoIMRN} and \cite{BrannanDocumenta}.
\end{rmk}

We also state the following theorem for completeness, though that here we do not give applications in the non-Kac case. We refer to \cite{CaspersGradient} for such examples. We mention that it is an open problem if a theorem of this form holds under the assumption of the W$^\ast$-CBAP only instead of the W$^\ast$-CCAP.

\begin{thm} \label{Thm=StronglySolidWreath}
Let $\bG$ be a compact quantum group. Suppose that $L_\infty(\bG \wr_\ast S_N^+)$ is solid and has the W$^\ast$-CCAP.  If $\bG$ carries a QMS of central multipliers that is approximately linear with almost commuting intertwiners that is immediately $L_2$-compact then  the free wreath product $\bG \wr_\ast S_N^+, N \geq 5$ is strongly solid.
\end{thm}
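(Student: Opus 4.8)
The plan is to reduce the statement directly to the non-Kac strong solidity criterion already established in Corollary \ref{Cor=NonKacStronglsySolid}, applied not to $\bG$ itself but to the ambient quantum group $\bG \wr_\ast S_N^+$. The point is that all the analytic input needed by that corollary --- a QMS of central multipliers that is approximately linear with almost commuting intertwiners and immediately $L_2$-compact, together with the W$^\ast$-CCAP and solidity of the von Neumann algebra --- is either transported to the wreath product by the stability results of Section \ref{Sect=Consequences} or assumed outright.

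First I would invoke Theorem \ref{Thm=WreathQMSdot}. Since $\bG$ carries a QMS of central multipliers that is approximately linear with almost commuting intertwiners and immediately $L_2$-compact, and $N \geq 5$, that theorem produces a QMS on $\bG \wr_\ast S_N^+$ with the very same two properties. This is where the substance of the paper is concentrated: Theorem \ref{Thm=WreathQMSdot} itself rests on the monoidal-equivalence description of $\bG \wr_\ast S_N^+$ from \cite{LemeuxTarrago}, the stability of our property under free products (Theorem \ref{Thm=FreeProduct}), monoidal equivalence (Theorem \ref{Thm=Monoidal}) and dual quantum subgroups (Theorem \ref{Thm=SubGroupdot}), and the fact that $SU_q(2)$ carries such a QMS (Theorem \ref{Thm=ApproxLinComSUq2}).

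Second, having equipped $\bG \wr_\ast S_N^+$ with the desired QMS, I would feed it into Corollary \ref{Cor=NonKacStronglsySolid}, whose hypotheses are now all met: $L_\infty(\bG \wr_\ast S_N^+)$ is solid and has the W$^\ast$-CCAP by assumption, and the QMS is approximately linear with almost commuting intertwiners and immediately $L_2$-compact by the previous step. The corollary then yields strong solidity of $L_\infty(\bG \wr_\ast S_N^+)$, which is exactly the claim. Unwinding the corollary, this is Theorem \ref{Thm=Main} turning approximate linearity with almost commuting intertwiners into the immediately gradient-$\cS_2$ property, followed by Theorem \ref{Thm=Gradient}\eqref{Item=StrongSolidNonKac}, where the W$^\ast$-CCAP supplies the Cowling--Haagerup constant $\Lambda = 1$ required in the non-Kac case.

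The argument carries no genuine obstacle of its own; the only point to watch is that the criterion being used is the non-Kac one, so that one must cite the version of Theorem \ref{Thm=Gradient} requiring $\Lambda = 1$ and solidity rather than the Kac-type version, and correspondingly the W$^\ast$-CCAP (not merely the W$^\ast$-CBAP) is indispensable --- as emphasized in the preceding remark and flagged as an open problem for the W$^\ast$-CBAP alone.
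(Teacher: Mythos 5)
Your proposal is correct and follows essentially the same route as the paper: transport the QMS to $\bG \wr_\ast S_N^+$ via Theorem \ref{Thm=WreathQMSdot} and then apply Corollary \ref{Cor=NonKacStronglsySolid} using the assumed solidity and W$^\ast$-CCAP of $L_\infty(\bG \wr_\ast S_N^+)$. Your remark that the non-Kac criterion with $\Lambda = 1$ is the one being invoked matches the paper's emphasis exactly.
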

\begin{proof}
It follows from \cite[Theorem 6.4 and Remark 6.6]{LemeuxTarrago}, \cite{CFY} that $\bG$ has the W$^\ast$-CCAP. Then we conclude by Theorem \ref{Thm=WreathQMSdot} and Corollary \ref{Cor=NonKacStronglsySolid}.
\end{proof}

\section{Non-commutative Riesz transforms and the Akemann-Ostrand property}\label{Sect=AO}

The aim of this section is to show that the methods in this paper also show that the von Neumann algebras we consider satisfy the Akemann-Ostrand property. The proof is the same as \cite[Section 5]{CIW} but the setting as presented in \cite[Section 5]{CIW} is too narrow for the current setup. Essentially we need to replace the filtrations considered in \cite{CIW} by more general fusion rules. Let us first recall the definition of the Akemann-Ostrand property from \cite{IsonoTrams}.

\begin{dfn}
A von Neumann algebra $M$ satisfies the Akemann-Ostrand property (briefly called ${\rm AO}^{+}$) if there exists a $\sigma$-weakly dense unital C$^\ast$-subalgebra $A \subseteq M$ such that
\begin{enumerate}
\item $A$ is locally reflexive \cite[Section 9]{BrownOzawa};
\item There exists a ucp map $\theta: A \otimes_{{\rm min}} A^{{\rm op}} \rightarrow B(L_2(M))$ such that $\theta(a \otimes b^{{\rm op}}) - a b^{{\rm op}}$ is compact for all $a,b \in A$.
\end{enumerate}
\end{dfn}

Now let $\bG$ be a compact quantum group and let $\Phi$ be a QMS of central multipliers on $\bG$ with generator $\Delta$.

\begin{dfn}\label{Dfn=Subexp}
We say that $\Phi$ has {\it subexponential growth} if $\Delta$ has compact resolvent and for every $\alpha, \gamma \in \Irr(\bG)$ we have
\[
\lim_{\beta \rightarrow \infty} \sup_{
\substack{
\beta' \subseteq \alpha \otimes \beta \otimes \gamma\\
\beta' \in \Irr(\bG) }}\vert \frac{\Delta_{\beta'}}{\Delta_\alpha} - 1 \vert = 0.
\]
Here the limit $\lim_{\alpha \rightarrow \infty} c_\alpha = c$ is defined as saying that for every $\epsilon >0$ there exists a compact set $K \subseteq \Irr(\bG)$ such that for all $\alpha \in \Irr(\bG) \backslash K$ we have $ \vert c_\alpha - c \vert  < \epsilon$.
\end{dfn}

\begin{rmk}\label{Rmk=SubexpAlmost}
The property \eqref{Eqn=AlmostOneB} implies that $\Phi$ has subexponential growth.
\end{rmk}

\begin{rmk}
The subexponential growth condition should be compared to the amenability results from \cite{CiprianiSauvageotAdvances} and \cite[Appendix]{CaspersGradient}. These results show that if the eigenvalues of $\Delta$ grow fast then the von Neumann algebra must be amenable. As a rule of thumb many semi-groups on non-amenable von Neumann algebras will have subexponential growth.
\end{rmk}

The aim of this section is to state the following theorem. By Remark \ref{Rmk=SubexpAlmost} it applies to all QMS's that are approximately linear with almost commuting intertwiners and for which the generator has compact resolvent; in particular it applies to the examples in this paper.

\begin{thm}\label{Thm=AO}
Let $\bG$ be a compact quantum group of Kac type. Let $\Phi$ be a QMS of central multipliers that\footnote{The immedately gradient-$\cS_2$ condition can be replaced by the weaker gradient coarse condition from \cite[Definition 4.1]{CaspersGradient}}  is immediately gradient-$\cS_2$ and which has subexponential growth. Assume that $C_r(\bG)$ is locally reflexive. Then $L_\infty(\bG)$ satisfies ${\rm AO}^+$.
\end{thm}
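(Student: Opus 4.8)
The plan is to take $A = C_r(\bG)$ as the dense subalgebra witnessing $\mathrm{AO}^+$. Since $\bG$ is of Kac type, $\varphi$ is tracial, $M := L_\infty(\bG)$ is finite and $L_2(M) = L_2(\bG)$; the algebra $A$ is $\sigma$-weakly dense in $M$ and locally reflexive by hypothesis, so condition (1) of the definition of $\mathrm{AO}^+$ is immediate and the whole task reduces to producing a ucp map $\theta \colon A \minotimes A^{\op} \to B(L_2(\bG))$ with $\theta(a \otimes b^{\op}) - a b^{\op}$ compact for all $a, b \in A$. As in Theorem \ref{Thm=Gradient}, the immediately gradient-$\cS_2$ assumption yields (via \cite[Section 3.2, Propositions 3.8 and 4.3]{CaspersGradient}, see also \cite[Theorem 3.9]{CIW}) a closable real derivation $\partial \colon \Pol(\bG) \to H_\partial$ with $\Delta = \partial^\ast \overline{\partial}$, whose bimodule $H_\partial$ is weakly contained in the coarse bimodule $\cHcoarse$.

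First I would introduce the non-commutative Riesz transform $R := \overline{\partial}\, \Delta^{-1/2}$, defined to be $0$ on $\ker \Delta$ and by functional calculus on $L_2(\bG)^\circ := L_2(\bG) \ominus \ker \Delta$. From $\Delta = \partial^\ast \overline{\partial}$ one computes $R^\ast R = \Delta^{-1/2} \partial^\ast \overline{\partial} \Delta^{-1/2} = P^\circ$, the projection onto $L_2(\bG)^\circ$, so $R$ is a partial isometry from $L_2(\bG)$ into $H_\partial$. Because $H_\partial \prec \cHcoarse$, the $M$-$M$-bimodule representation $\pi$ of $A \otimes A^{\op}$ on $H_\partial$, in which $a$ acts on the left and $b$ on the right, is dominated by the coarse representation $a \otimes b^{\op} \mapsto a \otimes b$ on $\cHcoarse$, which extends to the minimal tensor product; hence $\pi$ extends to a $\ast$-representation of $A \minotimes A^{\op}$. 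I would then set $\theta(x) := R^\ast \pi(x) R + \mu(x)\, P_{\ker}$, where $\mu$ is a fixed state and $P_{\ker}$ is the projection onto $\ker \Delta$ (finite-rank, since $\Delta$ has compact resolvent); this $\theta$ is ucp, being the sum of the compression of a $\ast$-representation by the contraction $R$ and a cp correction restoring unitality.

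The heart of the argument, and the main obstacle, is to show that for $a \in \Pol(\bG)$ the commutator $[R, a] \colon L_2(\bG) \to H_\partial$ (and likewise $[R, b^{\op}]$) is compact. Applying the Leibniz rule for $\overline{\partial}$ gives the identity $[R, a] = T_a \Delta^{-1/2} + \overline{\partial}\, [\Delta^{-1/2}, a]$, where $T_a \colon x\Omega_\varphi \mapsto \partial(a) \cdot x$ is bounded. The first term is compact because $\Delta^{-1/2}$ is compact on $L_2(\bG)^\circ$. For the second term the subexponential growth condition of Definition \ref{Dfn=Subexp} is exactly what is needed: on the block $P_\beta$ the operator $[\Delta^{-1/2}, a]$ acts with coefficients $\Delta_{\beta'}^{-1/2} - \Delta_\beta^{-1/2}$ for $\beta' \subseteq \alpha \otimes \beta$ (with $a$ a coefficient of $\alpha$), and since $\Delta_{\beta'}/\Delta_\beta \to 1$ these tend to $0$ as $\beta \to \infty$; combined with the $\cS_2$-control on $\partial$ this exhibits $\overline{\partial}\,[\Delta^{-1/2}, a]$ as a norm limit of finite-rank operators, hence compact.

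With the commutator estimate in hand the conclusion is routine. Writing $\sim$ for equality modulo the compact ideal, and using that $\mu(x)P_{\ker}$ is finite-rank, that left and right actions commute, and that $[R,a],[R,b^{\op}]$ are compact, one finds
\[
\theta(a \otimes b^{\op}) \;\sim\; R^\ast \pi(a \otimes b^{\op}) R \;\sim\; R^\ast R\, a\, b^{\op} \;=\; P^\circ\, a\, b^{\op} \;\sim\; a\, b^{\op},
\]
the last step because $P^\circ = 1 - P_{\ker}$ with $P_{\ker}$ finite-rank. Thus $\theta(a \otimes b^{\op}) - a b^{\op}$ is compact for all $a,b \in \Pol(\bG)$, and by density for all $a,b \in A$; together with local reflexivity of $A = C_r(\bG)$ this establishes $\mathrm{AO}^+$. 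The only genuinely delicate point is the second commutator term, where one must quantify the mismatch of $\Delta^{-1/2}$ across the fusion decomposition; this is precisely where subexponential growth (and, through Remark \ref{Rmk=SubexpAlmost}, approximate linearity) does the work, following \cite[Section 5]{CIW}.
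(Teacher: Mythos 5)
Your proposal is correct and follows essentially the same route as the paper's (sketched) proof: both construct the ucp map by compressing the coarse-extended bimodule representation on $H_\partial$ through the Riesz transform $\partial\Delta^{-1/2}$, using immediate gradient-$\cS_2$ for weak containment in the coarse bimodule and subexponential growth together with the fusion-rule decomposition $\mathcal{A}(\alpha)\mathcal{A}(\beta)\subseteq\oplus_{\gamma\subseteq\alpha\otimes\beta}\mathcal{A}(\gamma)$ to get compactness of the commutators, exactly as in \cite[Section 5]{CIW}. Your one-sided commutator splitting $[R,a]=T_a\Delta^{-1/2}+\overline{\partial}[\Delta^{-1/2},a]$ is just a reorganization of the paper's two-sided map $T_{x,y}$, and the key estimate $\Delta_{\beta'}^{1/2}\vert\Delta_{\beta'}^{-1/2}-\Delta_\beta^{-1/2}\vert=\vert 1-(\Delta_{\beta'}/\Delta_\beta)^{1/2}\vert\to 0$ is precisely where subexponential growth enters in both arguments.
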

\begin{proof}[Proof sketch.]
The proof of Theorem \ref{Thm=AO} is a straightforward adaptation of the arguments in \cite[Section 5]{CIW} and  \cite[Theorem 5.13]{CIW} with the following considerations taken into account. The idea is to consider a $L_\infty(\bG)$-$L_\infty(\bG)$-bimodule $H_\nabla$ (called the gradient bimodule or the carr\'e du champ) together with an isometry
\[
S := \partial \Delta^{-\frac{1}{2}}: L_2(\bG) \rightarrow H_\nabla,
\]
 (called the Riesz transform) and we refer to \cite[Eqn. (5.1)]{CIW} for their definitions which make perfect sense in the current context. By \cite[Proposition 3.8 and Proposition 4.4]{CaspersGradient} and using that $\Phi$ is immediately gradient-$\cS_2$ we see that $H_\nabla$ is weakly contained in the coarse bimodule of $L_\infty(\bG)$. We must then prove a suitable replacement of  \cite[Theorem 5.12]{CIW} stating  that for every $x,y \in \Pol(\bG)$ (and hence for every $x,y \in C_r(\bG)$) the map
\begin{equation}\label{Eqn=Compact}
T_{x,y}: L_2(\bG) \rightarrow H_\nabla:  \xi \mapsto S (x  \xi y)  -  xS(\xi)y,
\end{equation}
is compact. Then a standard argument yields condition ${\rm AO}^+$ for which we refer to \cite[Proposition 5.2]{CIW} and finishes the proof.

 The most important part is thus that we must prove that \cite[Theorem 5.12]{CIW} still holds in the current context, meaning that \eqref{Eqn=Compact} is compact. \cite[Theorem 5.12]{CIW} assumes that the von Neumann algebra is filtered (see \cite{CIW}), which is not the case in the setting of Theorem \ref{Thm=AO}. However, we can still make the following observation. For $\alpha \in \Irr(\bG)$ we set the space of matrix coefficients,
\[
\mathcal{A}(\alpha) = \{ (\iota \otimes \omega)(\alpha) \mid  \omega \in M_{n_\alpha}(\mathbb{C})^\ast \}.
\]
Then for $\alpha, \beta \in \Irr(\bG)$ we have that
\[
\mathcal{A}(\alpha) \mathcal{A}(\beta) \subseteq \oplus_{\gamma \subseteq \alpha \otimes \beta} \mathcal{A}(\gamma),
\]
which replaces the filtered condition from \cite{CIW}. With this observation in mind and with the current notion of subexponential growth (Definition \ref{Dfn=Subexp}), the proof of \cite[Theorem 5.12]{CIW} translates  literally to the current setting.
\end{proof}

Essentially the theorem applies to all the examples mentioned in Section \ref{Sect=Consequences}. For instance we get the following result. Except for the case of $H_N^+$ this was already known from \cite{IsonoIMRN}, \cite{IsonoTrams}, \cite{BrannanDocumenta}.

\begin{thm}\label{Thm=EasyAO}

The seven series of free orthogonal easy quantum groups that were classified in \cite{WeberAdvances},  \cite{BanicaSpeicher}  under the names $O_{N_3}^+, S_{N_5}^+, H_{N_5}^+, B_{N_4}^+, S_{N_5}'^+, B_{N_4}'^{+}$ and $B_{N_4}^{\# +}$ satisfy AO$^+$ for $N_3 \geq 3, N_4 \geq 4, N_5  \geq 5$.
\end{thm}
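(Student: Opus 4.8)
The plan is to deduce Theorem~\ref{Thm=EasyAO} as a harvest of the machinery already assembled, exactly as Theorem~\ref{Thm=Easy} was deduced but feeding into Theorem~\ref{Thm=AO} in place of Corollary~\ref{Cor=KacStronglsySolid}. Concretely, for each of the seven series I would verify the three hypotheses of Theorem~\ref{Thm=AO}: that $\bG$ is of Kac type; that $\bG$ carries a QMS of central multipliers which is immediately gradient-$\cS_2$ and has subexponential growth; and that $C_r(\bG)$ is locally reflexive. The key simplification is that the middle condition never has to be checked by hand. Indeed, if $\bG$ carries a QMS of central multipliers that is approximately linear with almost commuting intertwiners and immediately $L_2$-compact, then Theorem~\ref{Thm=Main} supplies immediately gradient-$\cS_2$, while \eqref{Eqn=AlmostOneB} together with the compact resolvent coming from immediate $L_2$-compactness supplies subexponential growth via Remark~\ref{Rmk=SubexpAlmost} and Definition~\ref{Dfn=Subexp}. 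Thus the only real task is to produce, for each series, such a QMS, which is precisely what was already extracted in the proof of Theorem~\ref{Thm=Easy}.

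I would then reproduce that production of QMS's. The seed is $SU_q(2)$, which carries the required QMS by Theorems~\ref{Thm=Markov} and \ref{Thm=ApproxLinComSUq2}, and which is immediately $L_2$-compact by \eqref{Eqn=FimaVergnioux}. From here: $O_N^+$ is monoidally equivalent to $SU_q(2)$ (\cite[Section 5]{BRV}), so Theorem~\ref{Thm=Monoidal} transports the QMS; $S_N^+$ is monoidally equivalent to $SO_q(3)$, which is a dual quantum subgroup of $SU_q(2)$ (its even part), so Theorems~\ref{Thm=SubGroupdot} and \ref{Thm=Monoidal} apply. The identifications $B_N^+\simeq O_{N-1}^+$, $B_N^{\#+}\simeq O_{N-1}^+\ast\mathbb{Z}_2$ and $H_N^+\simeq\mathbb{Z}_2\wr_\ast S_N^+$ from \cite[Section 4]{Raum}, \cite[Propositions 5.1 and 5.2]{WeberAdvances} and \cite[Theorem 5.11]{LemeuxTarrago} then yield the QMS on these via free product stability (Theorem~\ref{Thm=FreeProduct}) and free wreath product stability (Theorem~\ref{Thm=WreathQMSdot}), using that the finite group $\mathbb{Z}_2$ trivially carries such a QMS. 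Immediate $L_2$-compactness passes through all of these constructions, so the middle hypothesis holds in every case.

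For the remaining two hypotheses: every member of the seven series is of Kac type, and each has the ACPAP (\cite{CFY}, \cite[Theorem 6.4]{LemeuxTarrago}), hence is weakly amenable and has an exact reduced C$^\ast$-algebra, which is therefore locally reflexive. With all three hypotheses in place, Theorem~\ref{Thm=AO} yields AO$^+$ for each of the seven series, and the admissible ranges of $N$ are inherited from the ranges in which the cited monoidal equivalences and wreath-product identifications are valid.

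The one step not literally covered by a stability theorem of this paper is the direct product with $\mathbb{Z}_2$, namely $S_N'^{+}\simeq S_N^+\times\mathbb{Z}_2$ and $B_N'^{+}\simeq O_{N-1}^+\times\mathbb{Z}_2$, and I expect this to be the only place requiring genuine (if light) work. I would dispose of it by a direct verification: on $\bG\times\mathbb{Z}_2$ one has $\Irr(\bG\times\mathbb{Z}_2)=\Irr(\bG)\times\{0,1\}$ with componentwise fusion rules, the $\mathbb{Z}_2$-labels index one-dimensional representations (so quantum dimensions and intertwiner norms are untouched), and equipping $\mathbb{Z}_2$ with the zero generator makes $\Delta_{(\beta,b)}=\Delta_\beta$. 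The sets $L$, $R$, the $v$-maps, and every estimate of Definition~\ref{Dfn=Almost} then reduce verbatim to those for $\bG$, the $\mathbb{Z}_2$-contribution to \eqref{Eqn=AlmostOne}--\eqref{Eqn=AlmostTwo} vanishing identically, while \eqref{Eqn=AlmostThree} and the square-summability of $\beta\mapsto\qdim(\beta)^{-1}$ survive the harmless doubling of the index set; immediate $L_2$-compactness likewise persists. What makes the AO$^+$ route cleaner here than the strong solidity route is that Theorem~\ref{Thm=AO} requires neither factoriality nor the W$^\ast$-CBAP, only local reflexivity, so the non-factoriality of these direct products is no obstruction at all.
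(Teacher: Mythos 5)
Your proposal is correct and takes essentially the same route as the paper, whose own ``proof'' of Theorem \ref{Thm=EasyAO} is the single remark that the theorem applies to the examples of Section \ref{Sect=Consequences}: one feeds the QMS's produced in the proof of Theorem \ref{Thm=Easy} (seeded by $SU_q(2)$ and propagated by monoidal equivalence, dual quantum subgroups, free products and free wreath products) into Theorem \ref{Thm=AO}, with immediate gradient-$\cS_2$ supplied by Theorem \ref{Thm=Main} and subexponential growth by Remark \ref{Rmk=SubexpAlmost}. Your explicit verification for the direct products $S_N^+\times\mathbb{Z}_2$ and $O_{N-1}^{+}\times\mathbb{Z}_2$ and your justification of local reflexivity (which the paper leaves implicit, the exactness of these reduced C$^\ast$-algebras being known from \cite{VaesVergnioux} and its stability under the constructions used) only supply details the paper omits.
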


 Finally it should be mentioned that also through condition AO$^+$ strong solidity results can be obtained through the results from \cite{IsonoTrams} and \cite{PopaVaesCrelle}.

\end{document}